\newcommand*{\widebar}{\overline}
\newcommand*{\definiere}{\mathrel{\mathop:}=}
\newcommand{\cyclic}{\mathop{\kern0.9ex{{+}\kern-2.10ex\raise-0.20
      ex\hbox{\Large\hbox{$\circlearrowright$}}}}\limits}
\newcommand{\acts}{\hspace{0.6mm}\mbox{\raisebox{0.26ex}{\tiny{$\bullet$}}}\hspace{0.6mm} }
\def\N{\ifmmode{\mathbb N}\else{$\mathbb N$}\fi}
\def\Z{\ifmmode{\mathbb Z}\else{$\mathbb Z$}\fi}
\def\Q{\ifmmode{\mathbb Q}\else{$\mathbb Q$}\fi}
\def\R{\ifmmode{\mathbb R}\else{$\mathbb R$}\fi}
\def\C{\ifmmode{\mathbb C}\else{$\mathbb C$}\fi}
\def\K{\ifmmode{\mathbb K}\else{$\mathbb K$}\fi}
\def\P{\ifmmode{\mathbb P}\else{$\mathbb P$}\fi}
\def\g{\ifmmode{\mathfrak g}\else {$\mathfrak g$}\fi}
\def\h{\ifmmode{\mathfrak h}\else {$\mathfrak h$}\fi}
\def\a{\ifmmode{\mathfrak a}\else {$\mathfrak a$}\fi}
\def\k{\ifmmode{\mathfrak k}\else {$\mathfrak k$}\fi}
\def\p{\ifmmode{\mathfrak p}\else {$\mathfrak p$}\fi}
\def\b{\ifmmode{\mathfrak b}\else {$\mathfrak b$}\fi}
\def\n{\ifmmode{\mathfrak n}\else {$\mathfrak n$}\fi}
\def\m{\ifmmode{\mathfrak m}\else {$\mathfrak m$}\fi}
\def\t{\ifmmode{\mathfrak t}\else {$\mathfrak t$}\fi}
\def\O{\ifmmode{\mathscr{O}}\else {$\mathscr{O}$}\fi}
\def\W{\ifmmode{\mathcal{V}}\else {$\mathscr{W}$}\fi}
\def\hq{\hspace{-0.5mm}/\hspace{-0.14cm}/ \hspace{-0.5mm}}
\def\kleinematrix#1,#2,#3,#4,{\begin{pmatrix}#1 & #2 \\ #3 & #4
  \end{pmatrix}}
\DeclareMathOperator{\trdeg}{trdeg}
\newtheoremstyle{daniel}{4.0mm}{0mm}{\itshape}{}{\bfseries}{.}{1.5mm}{}
\theoremstyle{daniel}
\newtheorem{thm}{Theorem}[section]
\newtheorem{prop}[thm]{Proposition}
\newtheorem{Defi}[thm]{Definition}
\newtheorem{lemma}[thm]{Lemma}
\newtheorem{cor}[thm]{Corollary}
\newtheorem{Exs}[thm]{Examples}
\newtheorem{Ex}[thm]{Example}
\newtheorem{Rems}[thm]{Remarks}
\newtheorem{Rem}[thm]{Remark}
\newtheorem*{thm*}{Theorem}
\newtheorem*{cor*}{Corollary}
\newtheorem*{prop2.2}{Proposition~2.2}
\newtheorem*{fundamentalquestion}{Fundamental Question of Geometric Invariant Theory}
\newtheorem*{prop*}{Proposition}
\newtheorem*{Notation}{Notation}
\newenvironment{rem}   {\begin{Rem}\em}{\end{Rem}}
\newenvironment{defi}  {\begin{Defi}\em}{\end{Defi}}
\newenvironment{ex}  {\begin{Ex}\em}{\end{Ex}}
\newenvironment{exs}  {\begin{Exs}\em}{\end{Exs}}
\numberwithin{equation}{section}
\def\cprime{$'$} \def\polhk#1{\setbox0=\hbox{#1}{\ooalign{\hidewidth
  \lower1.5ex\hbox{`}\hidewidth\crcr\unhbox0}}}
  \def\polhk#1{\setbox0=\hbox{#1}{\ooalign{\hidewidth
  \lower1.5ex\hbox{`}\hidewidth\crcr\unhbox0}}}
\providecommand{\bysame}{\leavevmode\hbox to3em{\hrulefill}\thinspace}
\providecommand{\MR}{\relax\ifhmode\unskip\space\fi MR }
\providecommand{\MRhref}[2]{%
  \href{http://www.ams.org/mathscinet-getitem?mr=#1}{#2}
}
\providecommand{\href}[2]{#2}
\begin{document}

\title[Complex-analytic quotients of algebraic $G$-varieties]{Complex-analytic quotients of algebraic $G$-varieties}

\author{Daniel Greb}

\address{Essener Seminar f\"ur Algebraische Geometrie und Arithmetik\\Fakult\"at f\"ur Mathe\-ma\-tik\\Universit\"at Duisburg-Essen\\
45117 Essen\\
Germany}

\email{daniel.greb@uni-due.de}{}
\urladdr{\href{http://www.esaga.uni-due.de/daniel.greb/}
{http://www.esaga.uni-due.de/daniel.greb/}}

\subjclass[2010]{32M05, 14L24, 14L30, 32E40.}
\keywords{Analytic Hilbert quotients, semistable quotients, good quotients, Geometric Invariant Theory, Nemirovski's class $\mathscr{Q}_G$, Mori dream spaces}
\thanks{During the preparation of this paper, the author was partially supported by the DFG-Forschergruppe 790 ``Classification of algebraic surfaces and compact complex manifolds'', the DFG-Graduiertenkolleg 1821 ``Cohomological Methods in Geometry'', the Baden--W\"urttemberg--Stiftung through the ``Eliteprogramm f\"ur Postdoktorandinnen und Postdoktoranden'', as well as by the Institutes of Mathematics at the universities of Bonn and Freiburg.}

\date{\today}

\enlargethispage{5mm}
\maketitle
\begin{abstract} It is shown that any compact semistable quotient of a normal variety by a complex reductive Lie group $G$ (in the sense of Heinzner and Snow) is a good quotient. This reduces the investigation and classification of such complex-analytic quotients to the corresponding questions in the algebraic category. As a consequence of our main result, we show that every compact space in Nemirovski's class $\mathscr{Q}_G$ has a realisation as a good quotient, and that every complete algebraic variety in $\mathscr{Q}_G$ is unirational with finitely generated Cox ring and at worst rational singularities. In particular, every compact space in class $\mathscr{Q}_T$, where $T$ is an algebraic torus, is a toric variety. 

\end{abstract}

\section{Introduction}
Varieties and complex manifolds obtained as quotients of some simple space by an algebraic or holomorphic group action form important classes of complex spaces, which allow for explicit computation and can therefore be used as a testing ground for general ideas and conjectures. The goal of this paper is to relate several of the known constructions, with a special emphasis on algebraicity results for compact complex-analytic quotients. 

Classical examples of quotient spaces include toric varieties, which can be realised as good quotients of open subsets of representation spaces for algebraic tori by the Cox construction \cite{Cox}, flag varieties and other Fano varieties, moduli spaces parametrising representations of quivers \cite{King}, and more generally Mori dream spaces \cite{HuKeel}. Motivated by these constructions, we say that an algebraic variety (and more generally an algebraic space) \emph{belongs to class $\mathscr{Q}_G^{alg}$}, for $G$ a complex reductive Lie group, if there exists a (connected) smooth affine $G$-variety $\mathscr{X}$ and a $G$-invariant algebraic subset $\Sigma \subset \mathscr{X}$ of codimension greater than one such that the good quotient $(\mathscr{X}\setminus \Sigma)\hq G$ for the $G$-action on $\mathscr{X}\setminus \Sigma$ exists and $X \cong (\mathscr{X}\setminus \Sigma)\hq G$.

The natural analogue of a good quotient in the analytic category is the concept of an \emph{analytic Hilbert quotient} (also called \emph{semistable quotient}) for the holomorphic action of a reductive group on a complex space. This notion was introduced and studied by Snow \cite{Snow} as well as by Heinzner and his coworkers \cite{HeinznerGIT, SemistableQuotients}. It naturally appears in the study of Hamiltonian actions of reductive Lie groups on K\"ahlerian manifolds and of K\"ahlerian reduction theory, see \cite{Extensionofsymplectic, ReductionOfHamiltonianSpaces}. 

Motivated by the Levi problem on complex spaces with singularities, Nemirovski \cite{Nemirovski} recently introduced a class $\mathscr{Q}_G$ of complex spaces defined in terms of semistable quotients, which are the natural analytic analogues of the spaces in class $\mathscr{Q}_G^{alg}$ introduced above. More precisely, an (irreducible) complex space $X$ is said to \emph{belong to class $\mathscr{Q}_G$}, for $G$ a complex reductive Lie group, if there exists a (connected) Stein $G$-manifold $\mathscr{X}$, and a $G$-invariant analytic subset $\Sigma \subset \mathscr{X}$ of codimension greater than one such that the analytic Hilbert quotient $(\mathscr{X}\setminus \Sigma)\hq G$ for the $G$-action on $\mathscr{X}\setminus \Sigma$ exists and $X \cong (\mathscr{X}\setminus \Sigma)\hq G$. The interest in spaces of this form stems from the fact that questions related to the classical Levi problem are tractable for domains over complex spaces in class $\mathscr{Q}_G$. Let us give some examples: while the Levi problem is still open for unramified pseudoconvex domains over Stein spaces with non-isolated singularities, it can be solved for domains over any Stein space in class $\mathscr{Q}_G$, see \cite[Rem.~3.5]{Nemirovski}. Even in the smooth case, working in class $\mathscr{Q}_G$ has advantages: while in general even a domain in a smooth projective variety does not necessarily have a Stein envelope of holomorphy \cite[\S 2, Satz 2]{GrauertBemerkenswert}, every unramified domain over a complex space in $\mathscr{Q}_G$ admits a Stein envelope of holomorphy by \cite[Thm.~1.2]{Nemirovski}. To give another example, Ivashkovich \cite[Sect.~4.6]{Ivashkovich} recently gave a classification of locally pseudoconvex domains in Hirzebruch surfaces using the Cox realisation (i.e., considering the Hirzebruch surfaces as manifolds in $\mathscr{Q}_{(\mathbb{C}^*)^2}$).

Our first main result is the following algebraicity statement for compact spaces in class $\mathscr{Q}_G$.

\begin{thm*}[Thm.~\ref{thm:Q_G}]
 Let $G$ be a complex reductive Lie group, and let $X$ be a compact complex space in class $\mathscr{Q}_G$. Then, there exists a reductive algebraic subgroup $H < G$ such that $X \in \mathscr{Q}_H^{alg}$. More precisely, there exists a rational $H$-representation $V$ together with an $H$-invariant algebraic subset $\Theta$ of codimension greater than one such that the $H$-action on $V \setminus \Theta$ admits a good quotient $V\setminus \Theta \to Q$ with $Q^{an} \cong X$. 
\end{thm*}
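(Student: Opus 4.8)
The plan is to use compactness to concentrate the problem over a single closed orbit, to linearise by the holomorphic slice theorem, and then to algebraise the resulting local model, invoking the paper's main theorem to cross from the analytic to the algebraic category.

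First I would note that $\mathscr{X}\setminus\Sigma$, and hence $X=(\mathscr{X}\setminus\Sigma)\hq G$, is connected, since $\Sigma$ has codimension greater than one in the connected manifold $\mathscr{X}$. The canonical map $X\to\mathscr{X}\hq G$ therefore sends the compact connected space $X$ to a compact connected analytic subset of the Stein space $\mathscr{X}\hq G$, which must be a single point $q_0$. Writing $\pi\colon\mathscr{X}\to\mathscr{X}\hq G$, this means that $X$ is entirely supported over $q_0$, and the fibre $\pi^{-1}(q_0)$ contains a unique closed orbit $G\cdot x_0$; being closed in a Stein $G$-space, this orbit has reductive stabiliser $H\definiere G_{x_0}$.

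Next I would apply the holomorphic slice theorem of Snow and Heinzner at $x_0$: a saturated open neighbourhood of $G\cdot x_0$ is $G$-equivariantly biholomorphic to $G\times_H S$, and, $\mathscr{X}$ being smooth, the slice $S$ is $H$-equivariantly biholomorphic to an $H$-invariant saturated neighbourhood $\Omega$ of $0$ in the slice representation $V_0\definiere T_{x_0}\mathscr{X}/T_{x_0}(G\cdot x_0)$. Passing to quotients gives $(G\times_H S)\hq G\cong\Omega\hq H$; since $\Omega$ is saturated it contains the whole null-fibre $\mathscr{N}\definiere\pi_{V_0}^{-1}(\bar 0)$, the null-cone of $V_0$, and as $X$ lies over $q_0\leftrightarrow\bar 0\in V_0\hq H$ we obtain an identification $X\cong(\mathscr{N}\setminus\Sigma)\hq H$ of $X$ with the analytic Hilbert quotient of the null-cone by the reductive group $H$ (here $\Sigma$ denotes its induced image in the slice).

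The decisive step is to replace these local analytic data by a global algebraic linear model, and here I expect the main difficulty. A preliminary remark pins down the requirement: whenever $X\cong(\mathscr{X}'\setminus\Sigma')\hq H$ with $\mathscr{X}'$ smooth affine and $\Sigma'$ of codimension greater than one, normality together with the codimension bound forces $\mathbb{C}[\mathscr{X}']^H=\mathbb{C}[\mathscr{X}'\setminus\Sigma']^H=\mathcal{O}(X)=\mathbb{C}$, so that $\mathscr{X}'\hq H$ is a single point. Consequently the slice representation $V_0$ cannot itself be used, as it carries non-constant invariants in general; the null-cone $\mathscr{N}$ does have trivial invariants but is singular and non-linear, and the induced $\Sigma$ need not retain codimension greater than one inside it. The heart of the matter is thus to manufacture a rational $H$-representation $V$ with $\mathbb{C}[V]^H=\mathbb{C}$ (equivalently with null-cone all of $V$) together with an algebraic $H$-invariant $\Theta$ of codimension greater than one for which the algebraic good quotient $(V\setminus\Theta)\hq H$ reproduces $X$. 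I would attack this through the commuting scaling action of $\mathbb{C}^*$ on the cone $\mathscr{N}$, which exhibits $X$ as a $\Proj$-type quotient, and through an $H$-equivariant closed embedding of $\mathscr{N}$ into a representation; compactness of $X$ should then allow one to carve out $\Theta$ algebraically and to match the two quotients. Finally, the paper's main theorem guarantees that the a priori merely analytic quotient is a genuine good quotient, and GAGA, applicable because $X=Q^{an}$ is compact, upgrades the local biholomorphism to the required global isomorphism $Q^{an}\cong X$. Keeping the codimension of $\Theta$ above one throughout the linearisation, and securing triviality of the invariants without enlarging the group $H$, is the principal obstacle.
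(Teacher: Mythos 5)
Your reduction is the right one and matches the paper's first step: the image of the compact connected $X$ in the Stein quotient $\mathscr{X}\hq G$ is a compact analytic subset, hence a point, so $\mathscr{O}(\mathscr{X})^G=\mathbb{C}$, there is a unique closed orbit $G\acts x_0$ in $\mathscr{X}$, and $H\definiere G_{x_0}$ is reductive. However, what you call ``the decisive step'' is precisely where the proposal ceases to be a proof: you never produce the representation $V$ and the subset $\Theta$, you only sketch a speculative attack (the scaling $\mathbb{C}^*$-action on the null-cone, a $\Proj$-type quotient, an equivariant embedding) and state yourself that carrying it out is ``the principal obstacle''. That is a genuine gap, and moreover the difficulty it tries to circumvent does not exist: your claim that the slice representation $V_0$ ``carries non-constant invariants in general'' is false in the present situation. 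Since $\mathscr{X}\hq G$ is a single point, the only non-empty saturated open subset of $\mathscr{X}$ is $\mathscr{X}$ itself; the slice theorem therefore gives $\mathscr{X}\cong G\times_H\Omega$ with $\Omega\hq H\cong\mathscr{X}\hq G$ a point, and any non-constant $f\in\mathbb{C}[V_0]^H$ would restrict to a non-constant invariant holomorphic function on the neighbourhood $\Omega$ of $0\in V_0$, a contradiction. Hence $\mathbb{C}[V_0]^H=\mathbb{C}$, the null-cone is all of $V_0$, and the local model is already linear and global; there is nothing to ``manufacture''.

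The paper implements exactly this, but in the algebraic category, which is also the cleaner order of operations: Proposition~\ref{prop:roughstructure} first gives $\mathscr{O}(\mathscr{X})^G=\mathbb{C}$ and equips $\mathscr{X}$ with the structure of an affine algebraic $G$-variety — note that this algebraisation is a prerequisite for invoking Theorem~\ref{mainthm} at all, a point your sketch passes over, since Theorem~\ref{mainthm} is a statement about algebraic $G$-varieties. Theorem~\ref{mainthm} then makes $\Sigma$ algebraic and the quotient a good quotient, and Luna's algebraic slice theorem \cite{LunaSlice}, applied at the unique closed orbit (this is Nemirovski's observation \cite[Sect.~2.4]{Nemirovski}), yields a $G$-equivariant biregular isomorphism $\mathscr{X}\cong G\times_H V$ with $V$ a rational $H$-representation. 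Setting $\Theta\definiere\Sigma\cap V$, the fibre over $eH$ of $G\times_H V\to G/H$, one has $\Sigma=G\times_H\Theta$ by $G$-invariance, so $\mathrm{codim}_V\Theta=\mathrm{codim}_{\mathscr{X}}\Sigma>1$ — your worry that the codimension might drop likewise dissolves — and $(V\setminus\Theta)\hq H\cong\bigl(G\times_H(V\setminus\Theta)\bigr)\hq G=(\mathscr{X}\setminus\Sigma)\hq G\cong X$, which is the assertion of Theorem~\ref{thm:Q_G}.
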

In the special case where $G$ is abelian this implies the following:
\begin{cor*}[Cor.~\ref{cor:toric}]
 Let $T$ be an algebraic torus, and let $X$ be a compact complex space in class $\mathscr{Q}_T$. Then, $X$ is a toric variety. 
\end{cor*}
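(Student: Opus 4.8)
The plan is to deduce the statement from the Main Theorem (Thm.~\ref{thm:Q_G}) by showing that a \emph{complete} good quotient of an open subset of a representation of a diagonalizable group is automatically toric. Applying Thm.~\ref{thm:Q_G} with $G = T$, we obtain a reductive algebraic subgroup $H < T$, a rational $H$-representation $V$, an $H$-invariant algebraic subset $\Theta \subset V$ of codimension greater than one, and a good quotient $\pi \colon V \setminus \Theta \to Q$ with $Q^{an} \cong X$. As a closed subgroup of the torus $T$, the group $H$ is diagonalizable; hence $V$ splits into $H$-weight lines and, after a choice of coordinates, $V \cong \C^N$ with $H$ acting diagonally through characters $\chi_1, \dots, \chi_N$. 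Passing to the effective quotient leaves $Q$ unchanged, so we may assume $H = \overline{H}$ is a closed subgroup of the big torus $\mathbb{T} \definiere (\C^*)^N \subset \mathrm{GL}(V)$. Since $V \setminus \Theta$ is smooth and $\overline{H}$ is reductive, $Q$ is normal and irreducible; and since $X = Q^{an}$ is compact, $Q$ is complete.

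I would realise $Q$ as a toric variety for the quotient torus $S \definiere \mathbb{T}/\overline{H}$, which is again a torus (its character group $\{m \in \Z^N : m|_{\overline{H}} = 0\}$ is free) of dimension $N - \dim \overline{H} = \dim Q$. On the open orbit $\mathbb{T} \subset V$ the subgroup $\overline{H}$ acts freely by translations, so the generic $\overline{H}$-orbit is free and coincides with a generic fibre of $\pi$; since a good quotient restricts to a geometric quotient over a dense open subset, it follows that $\C(Q) = \C(V)^{\overline{H}} = \C(\mathbb{T})^{\overline{H}} = \C(S)$. Thus $Q$ is birational to $S$, and the translation action of $S$ on itself yields a birational action on $Q$. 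By the standard characterisation of toric varieties (a normal variety carrying an effective torus action with a dense orbit isomorphic to the torus), and as normality of $Q$ is already established, it then suffices to upgrade this to a \emph{regular} action of $S$ on all of $Q$ whose dense orbit is the image of $\mathbb{T}$.

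The main obstacle is precisely this extension of the torus action, equivalently the assertion that $\Omega \definiere V \setminus \Theta$ is invariant not merely under $\overline{H}$ but under the whole torus $\mathbb{T}$ (so that $\Theta$ is a union of coordinate subspaces). This is the point where compactness of $X$ is essential: for a general $\overline{H}$-invariant $\Theta$ the set $\Omega$ carries no $\mathbb{T}$-action, but the existence of a good quotient with complete total space forces the $\overline{H}$-saturated set $\Omega$ to be $\mathbb{T}$-saturated, hence $\mathbb{T}$-invariant. I would prove this via the combinatorial description of the open subsets of a representation of a diagonalizable group admitting good quotients, due to Bia\l{}ynicki--Birula and \'Swi\k{e}cicka: such a saturated set is cut out by a monomial (``irrelevant'') ideal, i.e.\ $\Omega$ is the semistable locus for a $\mathbb{T}$-linearisation as in the Cox construction. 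Granting this, $\mathbb{T}$ acts on $\Omega$ commuting with $\overline{H}$, and because $\pi$ is a categorical quotient and good quotients commute with the product $\mathbb{T} \times (-)$ (so that $(\mathbb{T}\times\Omega)\hq\overline{H} \cong \mathbb{T}\times Q$), the action descends to a regular action of $S = \mathbb{T}/\overline{H}$ on $Q$; the image of $\mathbb{T}$ is the dense orbit $S \embeds Q$. Hence $Q$ is a complete toric variety, and $X \cong Q^{an}$ is a toric variety, as claimed.
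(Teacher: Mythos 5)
Your overall strategy is in fact the same as the paper's: apply Theorem~\ref{thm:Q_G}, diagonalise the $H$-action, use compactness of $X$ to conclude that $\Omega = V\setminus\Theta$ is maximal with respect to saturated inclusion, invoke the results of Bia{\l}ynicki-Birula and {\'S}wi{\polhk{e}}cicka (\cite[Thm.~1.6]{BBRecipeForVectorSpaces}, \cite[Cor.~6.1]{BBOpenSubsetsOfProjectiveSpacesWithQuotient}) to obtain invariance of $\Omega$ under the big torus $\mathbb{T}$, and descend the $\mathbb{T}$-action to $Q$. However, one step in the middle is not merely unproven but false: a good quotient does \emph{not} in general restrict to a geometric quotient over a dense open subset of $Q$ (you may be conflating this with Rosenlicht's theorem, which produces a geometric quotient of a dense open subset by a \emph{different} map, not by $\pi$). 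Consequently the claims that the generic $\overline{H}$-orbit coincides with a generic fibre of $\pi$, that $\C(Q) = \C(V)^{\overline{H}} = \C(S)$, that $\dim S = N - \dim\overline{H} = \dim Q$, and that the dense orbit of $S$ in $Q$ is isomorphic to $S$, all fail. Concretely, let $\overline{H} = (\C^*)^2$ act on $V = \C^2\oplus\C^2$ by scaling each summand and take $\Theta = \{0\}\oplus\C^2$ (invariant, of codimension two); the good quotient is $\Omega \to Q = \mathbb{P}^1$, which is complete, yet the generic fibre is a punctured plane times $\C^2$ rather than an orbit, $\trdeg_\C \C(V)^{\overline{H}} = 2 > 1 = \dim Q$, and the dense orbit of the two-dimensional torus $S = \mathbb{T}/\overline{H}$ in $Q$ is a one-dimensional \emph{quotient} of $S$. (Simpler still: $\C^*$ scaling $\C^2$ with $\Theta = \emptyset$ gives $Q$ a point.) The conclusion survives only in the weaker form that the paper actually proves: the descended $S$-action on $Q$ has a dense orbit, the stabiliser of a point in that orbit equals the kernel of the action because $S$ is abelian, and hence $Q$ is a torus embedding for the quotient of $S$ by this kernel, which is again a torus. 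So the birationality discussion must be deleted and replaced by this dense-orbit argument.

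Two further points are passed over. First, $H$ is diagonalizable but need not be connected, whereas the quoted combinatorial results concern subtorus actions; the paper therefore splits $H = \Gamma\times T'$ with $\Gamma$ finite and $T'$ a torus, applies those results to the quotient by $T'$ alone (which is complete because the induced map $q_\Gamma\colon U\hq T' \to Q$ is finite and $Q$ is complete, whence $U$ is $T'$-maximal), and only afterwards descends the toric structure along the finite quotient by $\Gamma$, using that $\Gamma$ commutes with $\hat T = \mathbb{T}/T'$. Second, Theorem~\ref{thm:Q_G} produces $Q$ a priori only as a complete algebraic \emph{space}, while the assertion to be proven is that $X$ is a toric \emph{variety}; you tacitly treat $Q$ as a variety throughout. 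The paper settles this with the same splitting: $U/\Gamma$ is a normal quasi-affine variety, and a good quotient of a normal variety by a torus is again a variety by \cite[Cor.~7.1.3]{BBSurvey}. With these repairs---the dense-orbit argument in place of the function-field argument, and the $\Gamma\times T'$ splitting inserted---your proof becomes essentially the proof given in the paper.
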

We show by example that for a general non-commutative group $G$ there exist examples of compact Moishezon spaces in class $\mathscr{Q}_G$ that are not algebraic varieties (see Example~\ref{ex:algebraicspaceII}). Moreover, we obtain the following rough structure theorem for \emph{varieties} in class $\mathscr{Q}_G$:
\begin{cor*}[cf.~Cor.~\ref{cor:Coxfg} and Prop.~\ref{prop:additionalproperties}] 
  Let $X$ be a complete algebraic variety in class $\mathscr{Q}_G$. Then, $X$ is unirational with at worst rational singularities, and both the divisor class group $\mathrm{Cl}(X)$ and the Cox ring $\mathrm{Cox}(X)$ are finitely generated. If $X$ is additionally assumed to be $\mathbb{Q}$-factorial and projective, then $X$ is a Mori dream space.
\end{cor*}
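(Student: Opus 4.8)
The plan is to reduce every assertion to the explicit quotient realisation provided by the Main Theorem. First I would invoke Theorem~\ref{thm:Q_G} to produce a reductive algebraic subgroup $H < G$, a rational $H$-representation $V$, and an $H$-invariant closed algebraic subset $\Theta \subset V$ of codimension at least two, together with a good quotient $\pi \colon V \setminus \Theta \to Q$ such that $Q^{an} \cong X$. Since $X$ is a complete algebraic variety, $Q$ is a complete normal algebraic variety with $Q^{an} \cong X^{an}$; because the analytification functor is fully faithful on proper varieties, $Q \cong X$ as algebraic varieties, and I may henceforth take $X = (V \setminus \Theta) \hq H$ with $V \cong \mathbb{C}^{n}$ smooth. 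Two elementary facts will be used throughout: as $V$ is factorial and $\Theta$ has codimension $\geq 2$, one has $\mathrm{Cl}(V \setminus \Theta) = \mathrm{Cl}(V) = 0$, and by normality and Hartogs extension $\mathbb{C}[V \setminus \Theta] = \mathbb{C}[V]$, a polynomial ring whose units are constants, with the analogous identity $\mathbb{C}[V\setminus\Theta]_\chi = \mathbb{C}[V]_\chi$ on each space of $\chi$-semi-invariants.

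Unirationality and rational singularities are then immediate. The quotient map $\pi$ is dominant and $V \setminus \Theta$, being a dense open subset of affine space, is rational; hence $X$ is unirational. For the singularities, I would cover $X$ by affine open sets $U_i$; by the defining property of a good quotient each $\pi^{-1}(U_i)$ is a smooth (hence rational-singularity) affine $H$-variety with $\pi^{-1}(U_i) \hq H = U_i$. Boutot's theorem, that the quotient of a variety with rational singularities by a reductive group again has rational singularities, shows each $U_i$ has rational singularities, and since this property is local, so does $X$.

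The heart of the matter, and where I expect the real work to lie, is finite generation of $\mathrm{Cl}(X)$ and $\mathrm{Cox}(X)$, which I would obtain by comparing divisors and divisorial sections on $X$ with characters and semi-invariants of $H$. Given a prime divisor $D$ on $X$, its preimage is an $H$-invariant Weil divisor on $V \setminus \Theta$; because $\mathrm{Cl}(V \setminus \Theta) = 0$ it is the divisor of a rational function which, being $H$-invariant up to a scalar factor, is a semi-invariant of a well-defined weight $\chi(D) \in \mathfrak{X}(H)$. The assignment $[D] \mapsto \chi(D)$ gives an injective homomorphism $\mathrm{Cl}(X) \hookrightarrow \mathfrak{X}(H)$ — its kernel consists of classes whose defining function is $H$-invariant and therefore descends to a rational function on $X$, forcing the class to vanish — so $\mathrm{Cl}(X)$ is finitely generated as a subgroup of the finitely generated abelian group $\mathfrak{X}(H)$. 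Under the associated identification $\mathcal{O}_X(D) \cong \mathcal{L}_{\chi(D)}$ of divisorial sheaves with the weight sheaves of $\pi_* \mathcal{O}_{V \setminus \Theta}$, one computes $H^0(X, \mathcal{O}_X(D)) \cong \mathbb{C}[V]_{\chi(D)}$, whence
\[
  \mathrm{Cox}(X) = \bigoplus_{[D] \in \mathrm{Cl}(X)} H^0(X, \mathcal{O}_X(D)) \;\cong\; \bigoplus_{\chi} \mathbb{C}[V]_\chi = \mathbb{C}[V]^{H_0},
\]
where $H_0 = \bigcap_\chi \ker \chi$ runs over the characters in the image of $\mathrm{Cl}(X)$. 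This $H_0$ is a reductive normal subgroup of $H$ with diagonalisable quotient, so by the Hilbert--Nagata finiteness theorem its invariant ring in $\mathbb{C}[V]$ is finitely generated, and hence so is $\mathrm{Cox}(X)$. The delicate points are precisely to control the locus of positive-dimensional and finite stabilisers so that the pullback of Weil divisors is compatible with the grading and introduces no spurious multiplicities or torsion, and to verify that the displayed identification holds on the nose as graded rings rather than merely exhibiting $\mathrm{Cox}(X)$ as a subring of a finitely generated ring.

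Finally, the Mori dream space statement follows formally. Under the extra hypotheses $X$ is normal, projective and $\mathbb{Q}$-factorial; unirationality together with rational singularities gives $H^1(X, \mathcal{O}_X) = 0$, so $\mathrm{Pic}^0(X)$ is trivial and $\mathrm{Pic}(X)_{\mathbb{Q}} = N^1(X)_{\mathbb{Q}}$. As $\mathrm{Cox}(X)$ is finitely generated, the Hu--Keel characterisation of Mori dream spaces \cite{HuKeel} applies and shows that $X$ is a Mori dream space.
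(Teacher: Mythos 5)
Your proposal is correct in substance, and it reaches the paper's conclusions by a genuinely different route in its main steps; only the opening reduction (invoking Theorem~\ref{thm:Q_G} and GAGA for proper algebraic spaces to replace $X$ by the good quotient $(V\setminus\Theta)\hq H$) and the unirationality argument coincide with the paper's. For the singularities, the paper deduces the stronger statement that \emph{any} space in $\mathscr{Q}_G$ has reductive quotient singularities from the holomorphic slice theorem \cite{Snow} and the main result of \cite{GrebAnalyticSingularities}; you instead apply Boutot's theorem to the affine pieces $\pi^{-1}(U_i) \to U_i$ of the algebraic good quotient, which is perfectly adequate for complete algebraic varieties in $\mathscr{Q}_G$ (though it only becomes available once algebraicity of the quotient is known, and Boutot's paper is not among the references, so it would have to be added). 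For finite generation of $\mathrm{Cl}(X)$ and $\mathrm{Cox}(X)$, the paper simply cites B\"aker's theorem \cite{Baeker}, whereas you reprove the relevant special case directly: $\mathrm{Cl}(X) \hookrightarrow \mathfrak{X}(H)$ via weights of semi-invariants, $\mathrm{Cox}(X) \cong \bigoplus_\chi \mathbb{C}[V]_\chi = \mathbb{C}[V]^{H_0}$, and Hilbert--Nagata applied to the reductive normal subgroup $H_0$. This buys self-containedness and makes the mechanism transparent, at the cost of having to carry out the ``delicate points'' you flag, which are exactly the content of \cite{Baeker}. Finally, for the Mori dream space assertion the paper needs nothing beyond its Definition~\ref{defi:MDS}, while you verify Hu--Keel's original conditions \cite{HuKeel} via $H^1(X, \mathscr{O}_X) = 0$; that is more than required, but correct.

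One step deserves a sharper justification than you give it: the injectivity of $\mathrm{Cl}(X) \to \mathfrak{X}(H)$. You argue that if $\chi(D) = 0$ then the defining function $f$ is $H$-invariant ``and therefore descends to a rational function on $X$''. Invariant \emph{rational} functions do not descend through good quotients in general (the paper itself must assume that generic orbits are closed to obtain such descent, cf.~the use of \cite[Cor.~5.3]{KaehlerQuotientsGIT} in Section~\ref{subsect:1.1.1proof}). What saves your argument is the special shape of $\mathrm{div}(f) = \pi^* D$: it forces $f$ to be a regular, nowhere vanishing, invariant function on the $\pi$-saturated open set $\pi^{-1}\bigl(X \setminus \mathrm{supp}\, D\bigr)$, where descent does hold by the defining property $(\pi_* \mathscr{O})^H = \mathscr{O}$ of a good quotient; one then verifies $\mathrm{div}(g) = D$ for the descended function $g$ by comparing pullbacks at the generic points of the components of $D$. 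With this point repaired (and the analogous care taken in identifying $H^0(X, \mathscr{O}_X(D)) \cong \mathbb{C}[V]_{\chi(D)}$ and its multiplicative structure), your argument goes through.
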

The aforementioned results are obtained as consequences of a much more general statement, which attacks the following

\begin{fundamentalquestion}
Given an algebraic $G$-variety or complex $G$-space $X$, find all $G$-invariant Zariski-open subsets with a good quotient, or a semistable quotient, respectively.
\end{fundamentalquestion}

In the algebraic setup, this question has been investigated and solved in a number of cases by Bia{\l}ynicki-Birula (in joint work with Sommese and {\'S}wi{\polhk{e}}cicka, see \cite{BBSurvey} for a survey) and Hausen, see for example~\cite{HausenCompleteOrbitSpaces}. The following algebraicity result reduces the fundamental question in the complex-analytic category to the corresponding question in the algebraic category, and can be summarised by saying that every \emph{compact} analytic quotient of an algebraic $G$-variety can already be obtained by the generalised Geometric Invariant Theory of Bia{\l}ynicki-Birula and {\'S}wi{\polhk{e}}cicka. 
\begin{thm}\label{mainthm}
 Let $G$ be a complex reductive Lie group, let $X$ be a $G$-irreducible normal $G$-variety, and let $\Sigma \subsetneq X$ be a $G$-invariant analytic subset such that the analytic Hilbert quotient $\pi\colon U\definiere X^{an} \setminus \Sigma \to U\hq G =:Q$ exists. Assume that $Q$ is compact. Then, the following holds.
\vspace{-2.5mm}
\begin{enumerate}
 \item[(\ref{mainthm}.1)] The complex space $Q$ is Moishezon. In particular, it is the complex space associated with an algebraic space (in the sense of Artin).
 \item[(\ref{mainthm}.2)] The set $\Sigma$ is an algebraic subvariety of $X$.
 \item[(\ref{mainthm}.3)] The map $\pi\colon U \to Q$ is a good quotient.
\end{enumerate}
\end{thm}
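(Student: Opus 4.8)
The plan is to establish the three assertions in order, using the first to algebraise the target and the compactness of $Q$ as the crucial constraint that forces all invariants to be algebraic. First I would reduce to $X$ irreducible: since $X$ is $G$-irreducible, a connected component together with its finite-index stabiliser in $G$ carries all the information and produces the same quotient. Assuming $X$ irreducible, I would bound the algebraic dimension $a(Q)$ from below. Every $G$-invariant rational function on $X$ restricts to a $G$-invariant meromorphic function on the dense open subset $U = X^{an}\setminus\Sigma$, and since $\mathcal{O}_Q = (\pi_*\mathcal{O}_U)^G$ the pullback $\pi^*\colon \mathcal{M}(Q)\xrightarrow{\sim}\mathcal{M}(U)^G$ is an isomorphism (a standard property of analytic Hilbert quotients); hence $\mathbb{C}(X)^G$ embeds into $\mathcal{M}(Q)$. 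By Rosenlicht's theorem $\trdeg_{\mathbb{C}}\mathbb{C}(X)^G = \dim X - d$ with $d = \max_{x}\dim(G\cdot x)$, while the generic fibre of $\pi$ is a single closed $G$-orbit of dimension $d$, so $\dim Q = \dim U - d = \dim X - d$. Therefore $a(Q)\ge\dim Q$, whence $a(Q)=\dim Q$ and $Q$ is Moishezon; by the theorem of Artin and Moishezon, $Q\cong\mathcal{Q}^{an}$ for a proper algebraic space $\mathcal{Q}$, proving (\ref{mainthm}.1).

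\textbf{Algebraicity of $\Sigma$.}
The leverage now is that, $Q$ being compact Moishezon, GAGA for the proper algebraic space $\mathcal{Q}$ gives $\mathcal{M}(Q)=\mathbb{C}(\mathcal{Q})$; combined with the isomorphism above, every $G$-invariant meromorphic function on $U$ is algebraic, and using normality of $X$ one identifies $\mathcal{M}(U)^G$ with the restriction of $\mathbb{C}(X)^G$. Next I would cover $\mathcal{Q}$ by finitely many affine open subspaces $\mathcal{V}_i=\Spec R_i$; their analytifications $V_i$ are Stein, so the sets $W_i\definiere\pi^{-1}(V_i)$ are $G$-invariant Stein open subsets of $X^{an}$ with $W_i\hq G = V_i$, covering $U$. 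Using the holomorphic slice theorem of Heinzner--Snow, $\pi|_{W_i}$ is locally modelled on affine Geometric Invariant Theory quotients; feeding in the generators of $R_i$ as algebraic invariant functions (available by the previous step, since they are restrictions of rational functions on $X$), I would identify $W_i$ with the analytification of a Zariski-open $G$-invariant subset of $X$ on which these invariants are regular and the affine quotient equals $\mathcal{V}_i$. Consequently each $W_i$ is Zariski-open, $U=\bigcup_i W_i$ is Zariski-open, and $\Sigma = X\setminus U$ is an algebraic subvariety, proving (\ref{mainthm}.2).

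\textbf{Good quotient and the main obstacle.}
With $\Sigma$ algebraic, $U=(X\setminus\Sigma)^{an}$ is the analytification of a normal algebraic $G$-variety, and the affine quotients $W_i\hq G=\mathcal{V}_i$ analytify to $\pi|_{W_i}$. Since the gluing of the $\mathcal{V}_i$ is governed by the algebraic structure of $\mathcal{Q}$ and the analytic Hilbert quotient is unique, these local algebraic good quotients glue to a global affine $G$-invariant morphism $X\setminus\Sigma\to\mathcal{Q}$ with $\mathcal{O}_{\mathcal{Q}}=(\text{pushforward})^G$ separating disjoint closed invariant sets; its analytification is $\pi$, which is therefore a good quotient, giving (\ref{mainthm}.3). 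I expect the real obstacle to lie in the second step, namely promoting the \emph{analytic} local structure of $\pi$ to an \emph{algebraic} one: showing that each Stein piece $W_i$ is Zariski-open and that $W_i\hq G$ is the analytification of the \emph{algebraic} affine quotient. The compactness of $Q$ is indispensable here, as it is precisely what forces the invariant meromorphic functions on $U$ to be algebraic; without it the local invariants would only be holomorphic and no algebraisation could be expected.
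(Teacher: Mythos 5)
Your first step contains a genuine mathematical error, and it is precisely the difficulty that the paper's proof is designed to circumvent. The claimed isomorphism $\pi^*\colon \mathscr{M}_Q(Q) \xrightarrow{\sim} \mathscr{M}_U(U)^G$ is \emph{not} a standard property of analytic Hilbert quotients and is false in general, as are the dimension formula $\dim Q = \dim X - d$ and the assertion that the generic fibre of $\pi$ is a single closed orbit. Take $G = \mathbb{C}^*$ acting on $X = \mathbb{C}^2$ by scaling and $\Sigma = \emptyset$: the analytic Hilbert quotient is the affine quotient, a single point, which is compact, so all hypotheses of Theorem~\ref{mainthm} are satisfied; yet $\dim X - d = 1 \neq 0 = \dim Q$, and $x/y$ is a non-constant $G$-invariant meromorphic function on $U$ that does not descend to $Q$. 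Consequently your embedding $\mathbb{C}(X)^G \hookrightarrow \mathscr{M}_Q(Q)$, and with it the inequality $a(Q) \geq \trdeg_\mathbb{C} \mathbb{C}(X)^G$, fails (in the example the left-hand side is $0$, the right-hand side $1$). The source of the problem is that a fibre of an analytic Hilbert quotient contains a unique \emph{closed} orbit but is in general a union of many orbits, and invariant meromorphic functions can separate the non-closed orbits inside a single fibre; moreover the generic orbit of $X$ need not be closed in $U$ at all. This is exactly why the paper does not argue on $X$ directly: it first invokes \cite[Lemma~6.2]{KaehlerQuotientsGIT} to produce a $G$-stable irreducible subvariety $Y \subset X$ with $\pi(Y \cap U) = Q$ such that $Y_{\mathrm{gen}} \cap U$ contains an orbit that is closed in $U$, and then applies Rosenlicht's theorem and the descent result for invariant meromorphic functions (\cite[Cor.~5.3]{KaehlerQuotientsGIT}, whose hypothesis is precisely the closedness of the generic orbit) to $Y$ rather than to $X$, obtaining $\dim Q = \dim Y - m_Y = \trdeg_\mathbb{C}\mathbb{C}(Y)^G \leq \trdeg_\mathbb{C}\mathscr{M}_Q(Q)$.

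The later steps inherit this defect and are, in addition, unproven exactly where the real work lies. In your second step, identifying $\mathscr{M}_U(U)^G$ with restrictions of elements of $\mathbb{C}(X)^G$ again uses the false isomorphism above, and the decisive assertion --- that each Stein set $W_i = \pi^{-1}(V_i)$ is the analytification of a \emph{Zariski-open} subset of $X$ on which $\pi$ restricts to the algebraic affine quotient --- is something you propose to ``identify'' but for which no mechanism is given; the holomorphic slice theorem only produces local \emph{analytic} models, and promoting an analytically Zariski-open, saturated Stein subset to an algebraic one is the heart of the matter, not a consequence of feeding in generators of $R_i$. The paper's route through this is substantially more involved: birationality of Rosenlicht quotients with $Q$ and openness of the resulting embedding (Proposition~\ref{prop:rosenlichtandsemistable}), Sumihiro neighbourhoods together with constructibility of saturations and Noetherian induction to show $U$ is Zariski-open (Lemma~\ref{lem:UcontainsZopen}, Proposition~\ref{prop:SumihiroZopen}), a separate argument that $\pi$ is an algebraic morphism (Theorem~\ref{thm:quotientalgebraic}), affineness of $\pi$ via a Luna-type slice theorem established directly from the analytic Hilbert quotient (Proposition~\ref{prop:LunaSlice}, Theorem~\ref{thm:affine}) combined with \'etale descent of affineness (Lemma~\ref{lem:etaleaffine}), and only then the Bia{\l}ynicki-Birula--{\'S}wi{\polhk{e}}cicka existence criterion (Theorem~\ref{thm:BBcriterion}) to produce the good quotient. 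Your gluing argument in step three would indeed be routine \emph{if} step two held in the strong form you state; as written, both rest on algebraization claims that are not established.
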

In Section~\ref{subsect:examples} we construct examples showing that the compactness assumption in the formulation of the previous theorem cannot be dropped. This result hence provides an essentially complete picture for the complex-analytic invariant theory of reductive algebraic group actions. It completes the investigation started in \cite{KaehlerQuotientsGIT}, where (\ref{mainthm}.2) and (\ref{mainthm}.3) were proven under the additional assumption that the quotient $Q$ be projective, cf.~\cite[Thm.~1.1]{PaHq}.

The proofs of (\ref{mainthm}.1), (\ref{mainthm}.2), and the algebraicity of $\pi$ given in Sections~\ref{subsect:1.1.1proof}, \ref{subsect:1.1.2proof}, and \ref{subsect:mapalgebraicproof}, respectively, partly build upon ideas and techniques developed in \cite{PaHq} and \cite{KaehlerQuotientsGIT}. These results generalise earlier works \cite{BBSommeseC*, BBSommeseC*2} of Bia{\l}ynicki-Birula and Sommese, in which these authors study geometric quotients of $SL_2(\mathbb{C})$-, $\mathbb{C}^*$-, and $(\mathbb{C}^*)^2$-actions.

In contrast, the proof of the central and subtle affineness property of $\pi$ given in Section~\ref{subsect:mapaffineproof} is independent of \cite{KaehlerQuotientsGIT}. It avoids the use of the equivariant version of Kodaira's embedding theorem relative to a projective quotient \cite[Thm.~3]{PaHq} and thus also provides a new and in fact more direct proof of \cite[Thm.~1.1(4)]{KaehlerQuotientsGIT}.

In the final section, it is shown that complete varieties in class $\mathscr{Q}_G$ are unirational, that they have finitely generated Cox ring and at worst rational singularities, see Corollary~\ref{cor:Coxfg} and Proposition~\ref{prop:additionalproperties}. It remains an interesting open question how to characterise spaces in class $\mathscr{Q}_G$ among spaces with these properties.

\subsubsection*{Acknowledgements}
The author wants to thank Peter Heinzner, Christian Miebach, Stefan Ne\-mi\-rovski, and Karl Oel\-je\-klaus for interesting and stimulating discussions. Furthermore, he is grateful to the organisers of the ''Russian-German conference on Several Complex Variables`` at Steklov Institute, during which some of these discussions took place, for the invitation and for their hospitality.

\section{Preliminaries}
Throughout the paper, a \emph{variety} is a reduced scheme of finite type over $\mathbb{C}$. In particular, a variety is not assumed to be irreducible. A \emph{complex space} is a reduced complex space with countable topology. Moreover, \emph{analytic subsets} of complex spaces are assumed to be closed. An \emph{open subset} refers to an open subset in the Euclidean topology of a complex space, while open subsets in the Zariski-topology will be called \emph{Zariski-open}. Our reference for the theory of algebraic spaces is \cite{KnutsonAlgebraicSpaces}.
\subsection{Good quotients and analytic Hilbert quotients}\label{sect:goodquotientsintro}
Let $G$ be a complex reductive Lie group and let $X$ be an algebraic variety equipped with an algebraic action of $G$. Then, a $G$-invariant morphism $\pi\colon X \to Y$, where $Y$ is an algebraic space (not necessarily an algebraic variety) is a \emph{good quotient} (of $X$ by the action of $G$) if 
\vspace{-2.5mm}
\begin{enumerate}
 \item $\pi$ is an affine morphism of algebraic spaces, and
 \item $(\pi_*\mathscr{O}_X)^G = \mathscr{O}_Y$ holds.
\end{enumerate}
Next we will introduce the natural analogue of this concept in the analytic category.

Let $G$ be a complex reductive Lie group and $X$ a holomorphic $G$-space. A
complex space $Y$ together with a $G$-invariant surjective holomorphic map $\pi\colon X \to Y$ is called
an \emph{analytic Hilbert quotient} or \emph{semistable quotient} of $X$ by the action of $G$ if
\vspace{-2.5mm}
\begin{enumerate}
 \item $\pi$ is a locally Stein map, and
 \item $(\pi_*\mathscr{O}_X)^G = \mathscr{O}_Y$ holds.
\end{enumerate}
\vspace{-1mm}
Here, \emph{locally Stein} means that there exists an open covering of $Y$ by open Stein subspaces
$U_\alpha$ such that $\pi^{-1}(U_\alpha)$ is a Stein subspace of $X$ for all $\alpha$. 
An analytic Hilbert quotient of a holomorphic $G$-space $X$ is unique up to biholomorphism once it exists, and we will denote it by $X\hq G$. It has the following properties (see \cite{SemistableQuotients}):
\begin{enumerate}
\item Given a $G$-invariant holomorphic map $\phi\colon X \to Z$ into a
complex
space $Z$, there exists a unique holomorphic map $\widebar \phi\colon Y \to Z$ such that
$\phi = \widebar \phi \circ \pi$.

\item For every Stein subspace $A$ of $X\hq G$ the inverse image $\pi^{-1}(A)$ is a Stein subspace
of $X$; in particular, $\pi$ is a Stein map.
\item The map $\pi$ is universal with respect to analytic subsets: If $A$ is a $G$-invariant analytic subset of $X$, $\pi(A)$ is an analytic subset of $X\hq G$, and $\pi|_A\colon A \to A\hq G$ is an analytic Hilbert quotient.
\item If $A_1$ and $A_2$ are $G$-invariant analytic subsets of $X$, then
$\pi(A_1) \cap \pi(A_2) = \pi(A_1 \cap A_2)$.
\end{enumerate}
It follows that two points $x,x' \in X$ have the same image in $X\hq G$ if and only if
$\overline{G\acts x} \cap \overline{G\acts x'} \neq \emptyset$. For each $q \in X\hq
G$, the fibre $\pi^{-1}(q)$ contains a unique closed $G$-orbit $G\acts x$. The stabiliser $G_x$ of $x$ in $G$ is a complex reductive Lie group by \cite[Prop.~2.5]{Snow} and \cite{Matsushima}. 

\begin{exs}\label{affinequotientisStein}
1.) If $X$ is a holomorphic Stein $G$-space, then the analytic Hilbert quotient exists and has the properties listed above (see \cite{HeinznerGIT} and \cite{Snow}).

2.) If $X$ is an algebraic $G$-variety with good quotient $\pi\colon X \to X\hq G$, then the associated holomorphic map of complex spaces $\pi^{an}\colon X^{an} \to (X\hq G)^{an}$ is an analytic Hilbert quotient, see \cite{Lunaalgebraicanalytic} as well as \cite[Sect.~6.4]{HeinznerGIT}. Note that while these references discuss the case of quotients in the category of algebraic varieties, the general case can be reduced to this one by a base change diagram. 
\end{exs}

\subsection{A criterion for the existence of good quotients}
In our study it will be important to decide whether a given $G$-variety admits a good quotient. For this, we will use the following criterion established by Bia{\l}ynicki-Birula and {\'S}wi{\polhk{e}}cicka:
\begin{thm}[Theorem B of \cite{BBExistenceOfQuotients}]\label{thm:BBcriterion}
Let $X$ be a normal algebraic variety equipped with an action of the reductive group $G$. If there exists a $G$-invariant affine morphism $f\colon X \to Z$ into an algebraic space $Z$, then there exists a good quotient $X \to X \hq G$.
\end{thm}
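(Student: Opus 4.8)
The plan is to build the quotient by descending the classical affine construction along an étale atlas of the base $Z$, realising $\pi$ as a relative spectrum over $Z$. Since $f$ is affine, everything can be read off étale-locally on $Z$. First I would choose an étale atlas $u\colon W \to Z$ with $W$ an affine scheme, and set $R = W \times_Z W$ with its two étale projections $p_1, p_2\colon R \to W$, so that $Z$ is the quotient of the étale equivalence relation $R \rightrightarrows W$. Because $f$ is affine and $W$ is an affine scheme, the base change $f_W\colon X_W \definiere X \times_Z W \to W$ is an affine morphism, hence $X_W = \Spec B$ is an affine scheme; it carries the $G$-action induced from $X$ (trivial on $W$). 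As $X_W \to X$ is étale, $B$ is a finitely generated $\mathbb{C}$-algebra, and since $G$ is reductive, Nagata's theorem yields that $q_W\colon X_W \to \Spec B^G =: Q_W$ is a good quotient of the affine $G$-scheme $X_W$, with $B^G$ finitely generated over $\mathcal{O}(W)$.

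The decisive point is that this affine construction is compatible with flat base change on $W$. For reductive $G$ the invariant functor $M \mapsto M^G$ is exact and $\mathcal{O}(W)$-linear (it is split by the Reynolds operator and respects the isotypic decomposition), so for any flat $\mathcal{O}(W)$-algebra $A'$ one has $(B \otimes_{\mathcal{O}(W)} A')^G = B^G \otimes_{\mathcal{O}(W)} A'$. Applying this to the étale, hence flat, projections $p_1, p_2$ gives canonical identifications of the invariant ring $\Gamma(X_R)^G$ with $B^G \otimes_{\mathcal{O}(W), p_i} \mathcal{O}(R)$. These are exactly the cocycle data needed to glue the local rings $B^G$ into a quasi-coherent sheaf of $\mathcal{O}_Z$-algebras $\mathcal{A} \definiere (f_*\mathcal{O}_X)^G$ on the algebraic space $Z$; the same base-change statement shows $\mathcal{A}$ is of finite type. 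I then define the quotient to be the relative spectrum $Q \definiere \underline{\Spec}_Z \mathcal{A}$, which exists as an algebraic space affine over $Z$, and let $\pi\colon X \to Q$ be the morphism induced by the inclusion $\mathcal{A} \hookrightarrow f_*\mathcal{O}_X$.

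It then remains to verify that $\pi$ is a good quotient, and this I would check after the faithfully flat étale base change $Q_W \to Q$, over which $\pi$ restricts to the affine quotient $q_W\colon X_W \to Q_W$. Both defining properties — that $\pi$ is an affine morphism and that $(\pi_*\mathcal{O}_X)^G = \mathcal{O}_Q$ — are étale-local on the target and hold for $q_W$ by the affine theory, so they descend to $\pi$. Surjectivity and the separation of closed orbits likewise follow from the affine case by descent.

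The main obstacle is the second step: establishing that the local affine quotients are genuinely compatible along the étale overlaps, so that the invariant subalgebras patch to a quasi-coherent $\mathcal{O}_Z$-algebra over $Z$ (which in general is an honest algebraic space, not a scheme, forcing étale rather than Zariski descent). This rests entirely on the linear reductivity of $G$, through the exactness of $(-)^G$ and its commutation with flat base change; without it the gluing cocycle need not hold. I would also be careful about the finite-type and algebraic-space structure of $Q$. Interestingly, the normality hypothesis on $X$ does not appear to be needed for this particular argument, which suggests it is a convenient standing assumption, or is used in the reference only to secure additional regularity or separatedness of the quotient.
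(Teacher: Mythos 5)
Theorem~\ref{thm:BBcriterion} is not proved in this paper at all: it is imported verbatim as Theorem~B of \cite{BBExistenceOfQuotients}, so there is no internal proof to compare yours against. Judged on its own, your argument is correct in substance, but it is genuinely different from the cited original: what you have written out is, in essence, the proof of Alper and Easton \cite[Thm.~1.1]{AlperEaston}, which the paper mentions in the very next sentence as the generalisation of Theorem~B to the non-normal setting. All of your key steps are sound in the characteristic-zero situation at hand: affineness of $f$ makes $X_W = X\times_Z W$ an affine $G$-scheme of finite type over $\mathbb{C}$, so the Hilbert--Nagata finiteness theorem applies; linear reductivity of $G$ makes $(-)^G$ exact and commute with base change (in fact with \emph{arbitrary} base change, not only flat, since the isotypic decomposition of a rational $G$-module is preserved under $-\otimes_A A'$), which yields both the quasi-coherence of $\mathcal{A}\definiere(f_*\mathscr{O}_X)^G$ on the \'etale site of $Z$ and the identification $Q\times_Z W \cong \Spec B^G$; and the two defining properties of a good quotient are \'etale-local on the target, affineness being exactly Lemma~\ref{lem:etaleaffine}, which the paper records for the same purpose. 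As for the comparison: Bia{\l}ynicki-Birula and {\'S}wi{\polhk{e}}cicka prove the statement for normal varieties by pre-stack-theoretic methods in which normality is a genuine working hypothesis, whereas your descent argument never invokes it --- so your concluding suspicion is right, and removing normality is precisely the Alper--Easton improvement; what the citation route buys the paper is simply a ready-made reference, while your route buys generality and self-containedness at the price of the descent formalism. Two details to tidy up: a single affine \'etale atlas $W\to Z$ exists only when $Z$ is quasi-compact (otherwise take a disjoint union of affine schemes, which changes nothing), and when gluing you should remark that any morphism between schemes \'etale over $Z$ is itself \'etale, hence flat, so your base-change identity really does supply all the compatibilities needed for quasi-coherence of $\mathcal{A}$, not just those along the two projections $R\rightrightarrows W$.
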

The assumption that $f$ is affine is crucial for the proof of the above result, which has recently been generalised by Alper and Easton \cite[Thm.~1.1]{AlperEaston} to the non-normal setup. Consequently, in order to apply Theorem~\ref{thm:BBcriterion} it is important to detect affine morphisms. In our context the following observation turns out to be central. For a proof, the reader is referred to \cite[Prop.~I.4.12 and Extension II.3.8]{KnutsonAlgebraicSpaces}; cf.~also \cite[Prop.~2.7.1.xiii]{EGAIVPartII}.
\begin{lemma}[Testing affineness on \'etale coverings]\label{lem:etaleaffine}
 Let $\pi\colon X \to Y$ be a morphism from an algebraic variety $X$ to an algebraic space $Y$. Let $\hat Y_\alpha$, $\alpha \in I$, be an algebraic variety, and let $\{\eta_\alpha \colon \hat Y_\alpha \to Y\}$ be an \'etale covering of $Y$. Let $\hat\pi_\alpha \colon \hat X_\alpha \definiere \hat Y_\alpha \times_Y X \to \hat Y_\alpha$ be the base change of $\pi$ from $Y$ to $\hat Y_\alpha$. Then, in order for $\pi$ to be affine it suffices that $\hat \pi_\alpha$ be an affine morphism of schemes for every $\alpha \in I$.
\end{lemma}

\section{Proof of the main result}

The following notation will be in use for the whole section: let $G$ be a complex reductive Lie group, let $X$ be a $G$-irreducible normal  algebraic $G$-variety, and let $U \subset X$ be a non-empty analytically Zariski-open $G$-invariant open subset with complement $\Sigma$ such that the analytic Hilbert quotient $\pi\colon U \to Q$ exists. We suppose that $Q$ is compact.

\subsection{Reduction to the case $G$ connected}
Assume that Theorem~\ref{mainthm} is proven for the case of connected groups. Then, taking first the analytic Hilbert quotient by the connected component of the identity $G^0$ of $G$, which exists by \cite[Sect.~2, Prop.~1]{SemistableQuotients}, and then the analytic Hilbert quotient by the finite group $\Gamma \definiere G/G^0$, we obtain the following commutative diagram:
$$\begin{xymatrix}{
 X &\ar[l] U\ar[rd]_{\pi} \ar[r]^{\pi_{G^0}}& \ar[d]^{\pi_\Gamma}U \hq G^0\\
   &   & U\hq G.
}
  \end{xymatrix}
 $$
Since $X$ was assumed to be normal, its irreducible components are mutually disjoint, the decomposition of $U$ into irreducible components is given by intersecting the irreducible components of $X$ with $U$, and consequently, $U\hq G^0$ consists of a number of disjoint (compact) irreducible components, each arising as the quotient of one of the irreducible components of $U$ by the action of $G^0$. 
Going through the statement of Theorem~\ref{mainthm} applied to $\pi_{G^0}$ (or rather to its restriction to any of the irreducible components of $U$), we see that  $U \hq G^0$ is a (possibly disconnected) algebraic space, that the intersection of $\Sigma$ with any connected component of $X$ is algebraic, and hence $\Sigma$ itself is algebraic, and that $\pi_{G^0}\colon U \to U\hq G^0$ is a good quotient. Since good quotients for actions of finite groups always exists in the category of algebraic spaces, cf.~\cite[Thm.~4.3.2]{BBSurvey}, the quotient $Q = (U\hq G^0)/\Gamma$ is an algebraic space and the map $\pi = \pi_{\Gamma} \circ \pi_{G^0}$ is a good quotient. 

Consequently, without loss of generality we may assume that $G$ is connected, which we do from now on. Accordingly, from now on we will assume that $X$ is irreducible.

\subsection{Proof of (\ref{mainthm}.1): $X$ is Moishezon}\label{subsect:1.1.1proof}
The aim of this section is to show that $X$ is Moishezon, and hence the complex space associated with a complete algebraic space, cf.~\cite[Thm.~7.3]{ArtinAlgebraizationII}. The sheaf of germs of meromorphic functions on a complex space $Z$ will be denoted by $\mathscr{M}_Z$. Recall that for any irreducible algebraic $G$-variety $X$ the subset
\begin{equation}\label{eq:X_gen}
 X_{\mathrm{gen}}\definiere \{x \in X \mid \dim G\acts x = m_X\},
\end{equation}
where $m_X = \max_{x \in X} \{\dim G\acts x\}$, is $G$-invariant, Zariski-open, and dense in $X$.

We formulate the main result of this section as a separate theorem:

\begin{thm}
 Let $G$ be a connected complex reductive Lie group, let $X$ be an irreducible normal $G$-variety, and let $\Sigma \subsetneq X$ be a $G$-invariant analytic subset such that the analytic Hilbert quotient $\pi\colon U\definiere X^{an} \setminus \Sigma \to U\hq G =:Q$ exists. Assume that $Q$ is compact. Then, $Q$ is Moishezon.
\end{thm}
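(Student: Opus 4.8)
The plan is to establish the Moishezon property by producing $\dim Q$ algebraically independent meromorphic functions on $Q$. Recall that the algebraic dimension $a(Q) = \trdeg_{\mathbb{C}}\mathscr{M}_Q(Q)$ always satisfies $a(Q) \le \dim Q$, with equality being precisely the Moishezon condition; moreover $Q$ is irreducible, being the image of the dense irreducible set $U \subset X^{an}$. The functions will come from the field $\mathbb{C}(X)^G$ of $G$-invariant rational functions on the \emph{algebraic} variety $X$. By Rosenlicht's theorem on rational invariants one has $\trdeg_{\mathbb{C}}\mathbb{C}(X)^G = \dim X - m_X$, and since the analytic Hilbert quotient is locally modelled on affine GIT quotients—whose dimension is computed by the same invariant—the generic fibre of $\pi$ has dimension $m_X$, so that $\dim Q = \dim X - m_X$. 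It therefore suffices to descend a transcendence basis of $\mathbb{C}(X)^G$ to meromorphic functions on $Q$ and to verify that algebraic independence is preserved.

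The key step is the descent. Fix $f \in \mathbb{C}(X)^G$ and view it as a $G$-invariant rational map $f\colon X \dashrightarrow \mathbb{P}^1$, with $G$ acting trivially on the target. Its restriction to $U$ is a $G$-invariant meromorphic function; let $\Gamma_f \subset U \times \mathbb{P}^1$ be the closure of its graph, which is a $G$-invariant analytic subset for the action that is trivial on the second factor. As the analytic Hilbert quotient of $U \times \mathbb{P}^1$ by this action is $Q \times \mathbb{P}^1$, property~(3) of analytic Hilbert quotients shows that $(\pi \times \mathrm{id}_{\mathbb{P}^1})(\Gamma_f)$ is an analytic subset of $Q \times \mathbb{P}^1$. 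Since $f$ is constant along the fibres of $\pi$—any $G$-invariant function takes a single value on each set $\overline{G\acts x}$ and hence on each fibre—this image is irreducible of dimension $\dim Q$ and projects generically biholomorphically onto $Q$; it is therefore the graph of a meromorphic function $\bar f \in \mathscr{M}_Q(Q)$ with $f|_U = \bar f \circ \pi$.

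With descent available, choose $f_1, \dots, f_d \in \mathbb{C}(X)^G$ with $d = \dim X - m_X$ that are algebraically independent over $\mathbb{C}$, and let $\bar f_1, \dots, \bar f_d$ be their descents. Any polynomial relation $P(\bar f_1, \dots, \bar f_d) = 0$ on $Q$ pulls back under $\pi$ to $P(f_1, \dots, f_d) = 0$ on the dense subset $U \subset X^{an}$, hence identically on $X$, contradicting the algebraic independence of the $f_i$. Thus $\bar f_1, \dots, \bar f_d$ are algebraically independent in $\mathscr{M}_Q(Q)$, whence $a(Q) \ge d = \dim Q$, and combined with $a(Q) \le \dim Q$ this forces equality, so $Q$ is Moishezon.

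I expect the main obstacle to lie in the descent step, in two places: first, checking that the analytic Hilbert quotient of $U \times \mathbb{P}^1$ is genuinely $Q \times \mathbb{P}^1$, which one verifies from the local Stein characterisation and the compatibility of the quotient with a trivial product factor; and second, confirming that the analytic image $(\pi \times \mathrm{id})(\Gamma_f)$ is truly the graph of a meromorphic function rather than an analytic set of excessive dimension. The latter rests on the fact that $f$, being a continuous $G$-invariant function, is constant on each fibre of $\pi$; care is needed over the proper analytic locus of $Q$ where fibres are larger than a single orbit or where $f$ is indeterminate, but such loci do not affect the meromorphic function defined on the generic part of $Q$.
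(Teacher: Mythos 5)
Your argument has a genuine gap, and it sits exactly where you flagged the main obstacle: the descent step, together with the dimension count feeding into it. Both rest on the implicit assumption that the fibres of $\pi$ are \emph{generically} single (closed) orbits, so that generic fibres have dimension $m_X$ and invariant meromorphic functions are fibrewise constant. This is false in general. Take $X = \mathbb{C}^2$ with the scalar $\mathbb{C}^*$-action and $\Sigma = \emptyset$: the analytic Hilbert quotient is the affine GIT quotient, a single (compact) point $Q$. Here $\dim X - m_X = 1 \neq 0 = \dim Q$, so your formula $\dim Q = \dim X - m_X$ fails. Worse, the invariant rational function $f = x/y$ does not descend: it is not constant on the fibre $\pi^{-1}(q) = \mathbb{C}^2$, and the image of the closure of its graph under $\pi \times \mathrm{id}$ is all of $\{q\} \times \mathbb{P}^1$, not the graph of a meromorphic function. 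Your justification --- ``any $G$-invariant function takes a single value on each $\overline{G\acts x}$ and hence on each fibre'' --- is correct for \emph{continuous} invariant functions (every orbit in a fibre contains the unique closed orbit in its closure), but it fails for meromorphic functions precisely because of indeterminacy points (here the origin). Moreover, the locus where fibres contain more than one orbit need not be a proper analytic subset of $Q$ that one can discard: in this example it is all of $Q$. Run on this example, your argument would output $a(Q) \geq 1 > \dim Q$, an absurdity.

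This is why the paper does not work with $\mathbb{C}(X)^G$ directly. It first invokes \cite[Lemma~6.2]{KaehlerQuotientsGIT} to produce a $G$-stable irreducible algebraic subvariety $Y \subset X$ such that $\pi(Y \cap U) = Q$ and such that $Y_{\mathrm{gen}} \cap U$ contains an orbit that is closed in $U$. For such a $Y$ the generic orbit in $Y \cap U$ is closed in $U$, which yields both the correct dimension formula $\dim Q = \dim Y - m_Y$ and --- via \cite[Cor.~5.3]{KaehlerQuotientsGIT}, whose hypothesis is exactly this generic closedness --- the field embedding $\mathbb{C}(Y)^G \hookrightarrow \mathscr{M}_Q(Q)$. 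Rosenlicht's theorem is then applied to $Y$, not to $X$. In the scalar-action example this $Y$ is just the origin, and the count correctly gives $\dim Q = 0$. So the missing idea in your proposal is the passage to a subvariety whose generic orbits are closed in $U$ while its semistable part still surjects onto $Q$; without it, both the dimension formula and the descent break down.
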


\begin{proof}
An application of \cite[Lemma~6.2]{KaehlerQuotientsGIT} yields a $G$-stable irreducible algebraic subvariety $Y$ of $X$ such that $\pi(Y \cap U) = Q$ and such that the intersection $Y_{\mathrm{gen}} \cap U$ contains an orbit that is closed in $U$. A dimension count then shows that
\begin{equation}\label{eq:Qdim}
 \dim Q = \dim Y - m_Y.
 \end{equation}
By Rosenlicht's Theorem \cite{Rosenlicht2}, we have 
\begin{equation}\label{eq:Rosenlicht}
 \trdeg_\mathbb{C}(\mathbb{C}(Y)^G) = \dim Y -m_Y.
\end{equation}
Moreover, since the generic $G$-orbit in $U\cap Y$ is closed in $U$, it follows from \cite[Cor.~5.3]{KaehlerQuotientsGIT} that invariant meromorphic functions on $Y\cap U$ descend to $Q$; in other words, we have field monomorphisms 
\begin{equation}\label{eq:monomo}
 \mathbb{C}(Y)^G \hookrightarrow \mathscr{M}_Y(Y\cap U)^G \hookrightarrow \mathscr{M}_Q(Q).
\end{equation}
Consequently, we obtain the following chain of (in)equalities
\begin{align*}
\dim Q     &=   \dim Y -m_Y &&\text{by \eqref{eq:Qdim}}\\
           &= \trdeg_\C \C(Y)^G   &&\text{by \eqref{eq:Rosenlicht}}\\
           &\leq \trdeg_\C\mathscr{M}_Q(Q) &&\text{by \eqref{eq:monomo}}.
\end{align*}
Hence, $\trdeg_\C\mathscr{M}_Q(Q) = \dim Q$, i.e., $Q$ is Moishezon, as claimed.
\end{proof}

\subsection{Proof of (\ref{mainthm}.2): $\Sigma$ is algebraic}\label{subsect:1.1.2proof}
We formulate the main result of this section as a separate theorem:
\begin{thm}\label{thm:subvarietyalgebraic}
Let $G$ be a connected complex reductive Lie group, let $X$ be an irreducible normal algebraic $G$-variety, and let $U = X \setminus \Sigma$ be a non-empty $G$-invariant analytically Zariski-open subset of $X$ such that the analytic Hilbert quotient $\pi\colon U \to Q$ exists. If $Q$ is (the complex space associated with) a complete algebraic space, then $U$ is Zariski-open, and $\Sigma$ is an algebraic subvariety of $X$.
\end{thm}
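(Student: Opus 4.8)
The plan is to show that $\Sigma$ coincides with its Zariski closure, which immediately yields that $U = X \setminus \Sigma$ is Zariski-open. The argument proceeds by induction on $\dim X$ (the case $\dim X = 0$ being trivial), and the heart of the matter is to prove that $\Sigma$ is \emph{not} Zariski-dense, i.e.\ that $\overline{\Sigma}^{Zar} \subsetneq X$. To set up the necessary algebraic skeleton I first produce the rational quotient map. By the previous theorem $Q$ is a complete algebraic space, so $\mathscr{M}_Q(Q) = \C(Q)$; combining the monomorphism $\C(X)^G \hookrightarrow \mathscr{M}_Q(Q)$ coming from Rosenlicht's theorem and \cite[Cor.~5.3]{KaehlerQuotientsGIT} with the dimension count of Section~\ref{subsect:1.1.1proof} gives $\C(Q) = \C(X)^G$, hence a dominant $G$-invariant rational map $\phi \colon X \dashrightarrow Q$ of algebraic spaces. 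Since $X$ is normal and $Q$ is complete, $\phi$ is a morphism on a Zariski-open $D \subseteq X$ whose complement has codimension at least two, and $\phi^{an}$ agrees with $\pi$ on the dense set $U \cap D$. Comparing the analytic closure of the graph of $\pi$ with the analytification of the Zariski closure of the graph of $\phi$ (both proper over $X$ as $Q$ is complete, irreducible of dimension $\dim X$, and equal over $U \cap D$), and noting that the fibre of the analytic graph over any $x_0 \in U$ is the single point $(x_0, \pi(x_0))$, shows moreover that $U \subseteq D$ and $\phi|_U = \pi$.

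The crux is the claim that $\Sigma$ is contained in a proper Zariski-closed subset of $X$. I introduce the dense Zariski-open set $X^{good} \definiere X_0 \cap D \cap \phi^{-1}(Q^{gen})$, where $X_0$ is a dense $G$-invariant Zariski-open subset carrying a geometric quotient in the sense of Rosenlicht (so that $G$-orbits in $X_0$ are closed in $X_0$ and of dimension $m_X$), and $Q^{gen} \subseteq Q$ is the dense Zariski-open locus over which the fibres of $\phi$ are irreducible and equal to single orbit closures. I claim $\Sigma \cap X^{good} = \emptyset$. Indeed, suppose $x \in \Sigma \cap X^{good}$ and set $q \definiere \phi(x) \in Q^{gen}$. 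Then $\phi^{-1}(q) = \overline{G \acts x}$, and since $\Sigma$ is a $G$-invariant analytic subset and the Euclidean and Zariski closures of an orbit coincide, the entire fibre satisfies $\phi^{-1}(q) = \overline{G \acts x} \subseteq \Sigma$. On the other hand $q \in Q = \pi(U)$, so the fibre $\pi^{-1}(q) \subseteq U$ is non-empty and contains its distinguished closed orbit $G \acts z$; as $z \in U \subseteq D$ we have $\phi(z) = \pi(z) = q$, whence $z \in \phi^{-1}(q) \subseteq \Sigma$, contradicting $z \in U$. Therefore $\Sigma \subseteq X \setminus X^{good}$, and consequently $\overline{\Sigma}^{Zar} \subseteq X \setminus X^{good} \subsetneq X$.

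It then remains to run the induction. Writing $\overline{\Sigma}^{Zar} = \bigcup_j W_j$ with irreducible components $W_j$ of dimension strictly less than $\dim X$, I restrict the quotient to each $W_j$: by property~(3) of analytic Hilbert quotients $\pi(W_j \cap U)$ is a compact analytic subset of $Q$, hence a complete algebraic subspace (as $Q$ is a complete algebraic space), and $\pi|_{W_j \cap U}$ is again an analytic Hilbert quotient. Passing to the normalisation of $W_j$ and applying the inductive hypothesis shows that each $\Sigma \cap W_j$ is algebraic, so that $\Sigma = \bigcup_j (\Sigma \cap W_j)$ is algebraic and $U$ is Zariski-open. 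The main obstacle is precisely the crux claim $\overline{\Sigma}^{Zar} \subsetneq X$: this is where completeness of $Q$ is indispensable, both to guarantee that $\phi$ is an honest algebraic morphism away from a subset of codimension at least two and to supply, through the unique closed orbit in each fibre, the point $z \in U$ that forces the contradiction. Without it, one cannot exclude a ``transcendental'', Zariski-dense but proper analytic $\Sigma$.
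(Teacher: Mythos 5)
Your argument has a genuine gap, and it sits exactly at what the paper treats as the hardest step. Everything in your proof hinges on the claim $\mathbb{C}(Q)=\mathbb{C}(X)^G$ and on its geometric counterpart, namely that the fibres of $\pi$ (or of an extension $\phi$) over a dense Zariski-open part of $Q$ are single orbit closures. Both claims are false in general. The descent result \cite[Cor.~5.3]{KaehlerQuotientsGIT} that you invoke has as a hypothesis that the generic orbit of the variety to which it is applied is \emph{closed in $U$}; nothing in the hypotheses of the theorem guarantees this for $X$ itself, which is precisely why the paper's Moishezon argument (Section~\ref{subsect:1.1.1proof}) applies it not to $X$ but to a $G$-stable subvariety $Y \subsetneq X$ supplied by \cite[Lem.~6.2]{KaehlerQuotientsGIT}, obtaining $\dim Q = \dim Y - m_Y$ rather than $\dim X - m_X$. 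Concretely, let $G = \mathbb{C}^*\times\mathbb{C}^*$ act on $X = \mathbb{C}^2\times\mathbb{C}^{n+1}$ ($n\geq 1$) by scalar multiplication on each factor, let $\Sigma = \mathbb{C}^2\times\{0\}$ (codimension $n+1\geq 2$), $U = X\setminus\Sigma$, and $\pi(z,v)=[v]\in Q=\mathbb{P}^n$, a complete algebraic variety. All hypotheses of the theorem hold, but $\trdeg_{\mathbb{C}}\mathbb{C}(X)^G = n+1 > n = \trdeg_{\mathbb{C}}\mathbb{C}(Q)$ (the invariant $z_1/z_2$ does not descend to $Q$), and every fibre of $\pi$ equals $\mathbb{C}^2\times\mathbb{C}^*v$, which contains a one-parameter family of two-dimensional orbits. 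Note also that generic orbits are closed in the Rosenlicht set $X_0$ but \emph{not} in $U$; it is closedness in $U$ that governs the fibre structure of $\pi$, and the two notions differ here. So no fibre of $\pi$, nor of any rational extension of it, is a single orbit closure: your $Q^{gen}$ is empty, $X^{good}$ cannot be defined, and the contradiction argument for the crux claim never starts. (The conclusion of the theorem of course holds in this example; it is the proof that collapses.)

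The obstruction is structural: an analytic Hilbert quotient is generically a geometric quotient of $X$ only in the ``closed generic orbit'' case, and the paper organises its proof around exactly this dichotomy. Proposition~\ref{prop:rosenlichtandsemistable} treats that special case — there an argument close in spirit to yours does work, with Rosenlicht subsets playing the role of your $X^{good}$. In the general case (Lemma~\ref{lem:UcontainsZopen}) one only obtains a Zariski-open set $A$ consisting of orbits closed in $U$ \emph{inside the lower-dimensional subvariety} $Y$, and the real work — for which your proposal offers no substitute — is transferring Zariski-openness from $A$ up to $X$: the paper does this by intersecting with a Sumihiro neighbourhood $W$, passing to a projective closure, and using the Chow-variety Rosenlicht quotient together with constructibility of saturations \cite[Lem.~7.3]{PaHq} to show that $\mathcal{S}_G^X(A\cap W)\cap W$ contains a $G$-invariant Zariski-open subset of its closure; Noetherian induction (Proposition~\ref{prop:SumihiroZopen}) then finishes, much as in your final paragraph. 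As a secondary point, your induction also quietly requires the existence of analytic Hilbert quotients for the normalisations of the $W_j\cap U$, a technical step you do not justify and which the paper's induction avoids by never normalising — but this is minor compared to the missing crux step.
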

Under the assumption that $Q$ is a complete algebraic variety, the result was shown in \cite[Thm.~6.1]{KaehlerQuotientsGIT}. We closely follow the arguments given there, carefully indicating in which way these have to be adapted in order to apply also in case of non-schematic quotients. 

\emph{Rosenlicht subsets}, i.e., Zariski-open subsets of a given algebraic $G$-variety $X$ with a geometric quotient having the field of invariant rational functions on $X$ as its function field, play a prominent role, and we refer the reader to \cite[Sect.~3.1]{KaehlerQuotientsGIT} for a thorough discussion.

Moreover, motivated by a result of Sumihiro \cite{completion}, if $G$ is a connected algebraic group and $X$ an algebraic $G$-variety, a $G$-invariant, Zariski-open, quasi-projective neighbourhood of $x$ in $X$ that can be $G$-equivariantly embedded as a Zariski locally closed subset of the projective space $\mathbb{P}(V)$ associated with some rational $G$-representation $\rho\colon G \to \mathrm{GL}(V)$ is called a \emph{Sumihiro neighbourhood} of $x$ in $X$. 

Furthermore, the following notations will be used in the proof: If $X$ is a holomorphic $G$-space and $A$ is a $G$-stable subset of $X$, then we set \[\mathcal{S}_G^X(A) \definiere \{ x \in X \mid \overline{G \acts x} \cap A \neq \emptyset\},\]and call it the \emph{saturation} of $A$ with respect to the $G$-action on $X$. Moreover, given $X$ and $U$ as in Theorem~\ref{mainthm} and a $G$-invariant analytic subset $A$ of $X$, we set $A^{ss}\definiere A \cap U$.	

We start by looking at the case where the generic $G$-orbit is closed in $X$, cf.~\cite[Sect.~6.2]{KaehlerQuotientsGIT}.
\begin{prop}\label{prop:rosenlichtandsemistable}
Let $G$ be a connected complex reductive Lie group, let $X$ be an irreducible algebraic $G$-variety, and let $U \subset X$ be a $G$-invariant analytically Zariski-open subset of $X$ such that the analytic Hilbert quotient $\pi\colon U \to Q$ exists. Assume that $Q$ is a complete algebraic space and that $U \cap X_{\mathrm{gen}}$ contains an orbit that is closed in $U$.

If $V_R$ is any Rosenlicht subset of $X$, then $U\cap V_R$ contains a $G$-invariant Zariski-open (Rosenlicht) subset $U_R$ of $V_R$ consisting of $G$-orbits that are closed in $U$ such that there exists an open algebraic embedding $\imath\colon U_R/G \hookrightarrow Q$ making the following diagram commutative:
\[\begin{xymatrix}{
  &\ar_{\pi}[ld]  U_R  \ar^{p}[dr]&  \\
Q &     & \ar@{_{(}->}_{\imath}[ll] U_R/G.}
\end{xymatrix}
\]
In particular, the quotient map $\pi\colon U \to Q$ extends to a rational map $X \dasharrow Q$, whose restriction to $U$ is a morphism of algebraic spaces.
\end{prop}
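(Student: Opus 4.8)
The plan is to identify $Q$ birationally with the algebraic Rosenlicht quotient $V_R/G$ and then to upgrade that birational identification to a genuine open embedding over a sufficiently large $G$-invariant Zariski-open piece of $V_R$.

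\emph{Step 1: birational identification of $Q$ with $V_R/G$.} First I would record that, since $U\cap X_{\mathrm{gen}}$ contains an orbit closed in $U$ and closed orbits of maximal dimension form a generic locus, the generic $G$-orbit in $U$ is closed in $U$; hence the generic fibre of $\pi$ is a single closed orbit, so $\pi$ is generically a geometric quotient. Combined with the descent of invariant meromorphic functions \cite[Cor.~5.3]{KaehlerQuotientsGIT}, this gives field monomorphisms $\mathbb{C}(V_R/G)=\mathbb{C}(X)^G\hookrightarrow \mathscr{M}_Q(Q)$. As $Q$ is a complete algebraic space that is Moishezon, $\mathscr{M}_Q(Q)=\mathbb{C}(Q)$, and a transcendence-degree count via Rosenlicht's theorem, $\trdeg_\C\C(X)^G=\dim X-m_X=\dim Q$, shows this inclusion is an isomorphism. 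Since normality passes to invariants, both $Q$ and $V_R/G$ are normal, and I obtain a birational map $\psi\colon Q\dasharrow V_R/G$ of normal algebraic spaces.

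\emph{Step 2: the Zariski-open piece $U_R$ (main obstacle).} Let $\imath_0\colon O\to Q$ be the inverse rational map $\psi^{-1}$, realised as a morphism on a maximal $G$-invariant Zariski-open subset $O\subseteq V_R/G$, and set $U_R\definiere p^{-1}(O)$, a $G$-invariant Zariski-open subset of $V_R$. The crux, which I expect to be the main difficulty, is to arrange $U_R\subseteq U$, to make its orbits closed in $U$, and to make $U_R$ $\pi$-saturated, i.e.\ $U_R=\pi^{-1}(\pi(U_R))$. This is precisely where the \emph{algebraically} defined domain $O$ must be reconciled with the \emph{analytically} defined locus $U$ and with the geometric-quotient locus $Q^{gq}\subseteq Q$ over which each fibre of $\pi$ is a single closed orbit. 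Here I would exploit the completeness of $Q$: applying the valuative criterion along curves, together with the description of the fibres of the analytic Hilbert quotient (two points agree under $\pi$ iff their orbit closures in $U$ meet), forces $\imath_0\circ p$ to agree with $\pi$ wherever $\pi$ is defined; shrinking $O$ to the image of $Q^{gq}$ then yields simultaneously $U_R\subseteq U^{gq}$, orbits closed in $U$, and saturation.

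\emph{Step 3: from birational to open embedding.} On $U_R$ both quotients are geometric: the algebraic map $p|_{U_R}\colon U_R\to U_R/G$ is a geometric quotient because $U_R$ is $p$-saturated in $V_R$, while $\pi|_{U_R}$ separates orbits because they are closed in $U$. Since $U_R$ is $\pi$-saturated and open in $U$, its image $\pi(U_R)$ is open in $Q$, and uniqueness of quotients together with the universal property of $\pi$ identifies $\pi(U_R)$ with $(U_R/G)^{an}$. The resulting holomorphic bijection $\imath\colon U_R/G\to\pi(U_R)$ is then a biholomorphism onto an open subspace of the algebraic space $Q$, hence an open \emph{algebraic} embedding by Artin's comparison results. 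Commutativity $\imath\circ p=\pi|_{U_R}$ holds by construction, and injectivity of $\imath$ follows from the separation of closed orbits by $\pi$.

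\emph{Step 4: the rational extension.} Finally, $U_R$ is Zariski-dense in $X$, being Zariski-open in the dense Rosenlicht subset $V_R$, so the algebraic morphism $\imath\circ p\colon U_R\to Q$ extends to a rational map $X\dasharrow Q$ restricting to $\pi$ on $U_R$. To promote this to a morphism on all of $U$, I would use $G$-equivariance: the foregoing construction applies verbatim to the $G$-translates of the Rosenlicht pieces, which cover $U$, and $\pi$ is a globally defined $G$-invariant holomorphic map agreeing with the rational map on the dense open set $U_R$. Patching these local algebraic descriptions, using the normality of $X$ and the separatedness of $Q$ to guarantee agreement on overlaps, yields a morphism of algebraic spaces defined throughout $U$ that realises $\pi$.
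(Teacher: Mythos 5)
Your overall skeleton (identify $Q$ birationally with $V_R/G$ via descended invariant functions and Chow/GAGA for complete algebraic spaces, then upgrade to an open embedding over a suitable open set) is the same as the paper's, but the two steps that carry the real content have genuine gaps. First, in Step 1 your birationality argument is wrong as stated: equality of transcendence degrees only shows that $\mathbb{C}(Q)$ is a \emph{finite algebraic} extension of $\mathbb{C}(X)^G$, not that the inclusion is an isomorphism (compare $\mathbb{C}(t^2)\subset\mathbb{C}(t)$). What must be proved is that the rational map $\varphi\colon Q \dasharrow V_R/G$ induced by the inclusion is generically one-to-one, and the paper deduces this from the fact that the Rosenlicht invariants in $\mathbb{C}(X)^G$ separate orbits in $V_R$, hence separate the closed orbits that constitute the $\pi$-fibres over a suitable saturated open set. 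Since Steps 2--4 take the birationality of $\psi$ as input, this gap propagates through your whole argument.

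Second, the step you yourself flag as the crux is not actually carried out, and it is exactly here that the paper's key construction lives. Rather than starting from the domain of the rational map on $V_R/G$ and trying to force it inside $U$, the paper defines
\[
U_R := U \setminus \mathcal{S}^U_G(V_R^c \cap U),
\]
the complement in $U$ of the $\pi$-saturation of $V_R^c\cap U$. This set is by construction contained in $U\cap V_R$, non-empty, analytically Zariski-open and $\pi$-saturated; since the invariants separate orbits in $V_R$ and descend to rational functions on $Q$, each $\pi$-fibre inside $U_R$ is a single orbit, hence closed in $U$; and completeness of $Q$ together with Chow/GAGA shows $\pi(U_R)$ is Zariski-open in $Q$, from which one reads off both that $\varphi$ is birational and that $U_R$ is Zariski-open in $V_R$ and contained in $U$. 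Your substitutes do not achieve this: your set $p^{-1}(O)$ has no reason to be contained in $U$, to consist of orbits closed in $U$, or to be $\pi$-saturated (a $\pi$-fibre through a point of $V_R$ may leave $V_R$ entirely, or a whole $p$-fibre may lie in $\Sigma$); the valuative-criterion claim that $\imath_0\circ p$ agrees with $\pi$ ``wherever $\pi$ is defined'' is at best a continuity statement and does not address these containments; and ``shrinking $O$ to the image of $Q^{gq}$'' requires knowing that the image of this \emph{analytically} defined locus is Zariski-open, which is an analytic-to-algebraic comparison of precisely the kind in question. Finally, Step 4 fails as written: Rosenlicht subsets are already $G$-invariant (their $G$-translates are themselves) and consist only of orbits of maximal dimension, so they never cover the non-generic orbits in $U$; the paper instead obtains the ``in particular'' clause from its graph-closure Lemma~\ref{lem:holomorphicrational} on regularity of rational holomorphic maps.
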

\begin{proof}
 Let $V_R$ be any Rosenlicht set of $X$ and let $p: V_R \to V_R/G$ be the quotient map. We denote the complement of $V_R$ in $X$ by $V_R^c$. Without loss of generality we can assume that $V_R/G$ is affine. Our first aim is to show that $V_R/G$ is birational to $Q$. By \cite[Cor.~5.3]{KaehlerQuotientsGIT}, the restriction $f|_U$ of every $G$-invariant meromorphic function $f\in \mathscr{M}_X(X)^G$ to $U$ descends to a meromorphic function on $Q$. As a consequence of the Chow lemma for complete algebraic spaces, see \cite[Thm.~IV.3.1]{KnutsonAlgebraicSpaces} or \cite[Thm.~7.3]{ArtinAlgebraizationII}, these meromorphic functions are in fact rational. Consider the rational map $\varphi: Q \dasharrow V_R/G$ that corresponds to $p|_U$. Since elements of $\varphi^*(\C(V_R/G)) = \C(X)^G \subset \C(Q)$ separate orbits in $V_R$, they separate orbits in
\[U_R \definiere U \setminus \mathcal{S}^U_G(V^c_R \cap U) \subset U\cap V_R.\]

This set is non-empty, analytically Zariski-open, and $\pi$-saturated in $U$. Since $U$ contains a closed orbit of generic orbit dimension and since the algebraic space $Q$ is complete, $U_R$ is mapped to a non-empty Zariski-open subset of $Q$, again as a consequence of the Chow lemma or GAGA for complete algebraic spaces. Therefore, $\varphi$ is generically one-to-one and hence birational. Let $\imath \definiere \varphi^{-1} : V_R/G \dasharrow Q$. Without loss of generality, we may assume that $V_R/G$ coincides with the set where $\imath$ is an isomorphism onto its image. It follows that $\imath: V_R/G \hookrightarrow Q$ is an open embedding. We obtain the following commutative diagram
\[\begin{xymatrix}{
U_R  \ar@{^{(}->}[rr]\ar[d]_{p|_{U_R}}&  &U\ar[d]^\pi \\
p(U_R) \ar@{^{(}->}[rr]^<<<<<<<<<<{\imath|_{p(U_R)}}&  &Q.
}
  \end{xymatrix}
\]
Since $Q$ is complete, the image $p(U_R)$ is Zariski-open in $Q$. It follows that $U_R$ is Zariski-open in $V_R$ and contained in $U$. An  application of Lemma~\ref{lem:holomorphicrational} concludes the proof.
\end{proof}

We now return to the general case. The following is the analogue of \cite[Lem.~6.4]{KaehlerQuotientsGIT} in our more general setup.
\begin{lemma}\label{lem:UcontainsZopen}
Let $G$ be a connected complex reductive Lie group, $X$ an irreducible $G$-variety, and $U$ a non-empty  $G$-invariant analytically Zariski-open subset of $X$ such that the analytic Hilbert quotient $\pi: U \to Q$ exist and such that additionally $Q$ is a complete algebraic space.
Assume that every point $x\in U$ whose orbit $G\acts x$ is closed in $U$ has a Sumihiro neighbourhood. Then, $U$ contains a non-empty $G$-invariant Zariski-open subset of $X$.
\end{lemma}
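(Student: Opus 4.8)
The plan is to deduce the statement from Proposition~\ref{prop:rosenlichtandsemistable}, whose hypothesis requires $U \cap X_{\mathrm{gen}}$ to contain an orbit that is closed in $U$. The whole point of the lemma is therefore to \emph{produce} a closed-in-$U$ orbit of generic dimension $m_X$, and the Sumihiro neighbourhoods are the tool that lets us pass from the analytic quotient $\pi$ to an algebraic GIT quotient where such an orbit can be detected. First I would pick a point $x_0 \in U$ whose orbit $G \acts x_0$ is closed in $U$ and of maximal dimension $m$ among all orbits closed in $U$; such orbits exist and are in fact parametrised by $Q$, since $\pi\colon U \to Q$ is surjective and every fibre contains a unique closed $G$-orbit. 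By hypothesis $x_0$ admits a Sumihiro neighbourhood $W$: a $G$-invariant, Zariski-open, quasi-projective subset of $X$ that embeds $G$-equivariantly as a locally closed subvariety of some $\mathbb{P}(V)$, with $G$ acting through a rational representation $\rho\colon G \to \mathrm{GL}(V)$.

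Next I would run algebraic GIT on $W$. Equipping $L \definiere \mathscr{O}_{\mathbb{P}(V)}(1)|_W$ with the induced $G$-linearisation, reductivity of $G$ yields a semistable locus $W^{ss}(L)$, Zariski-open and (being non-empty and open in the irreducible $W$) dense in $W$, together with an algebraic good quotient $q\colon W^{ss}(L) \to W^{ss}(L)\hq G$ that is quasi-projective. Since $W^{ss}(L)$ is dense in $W$ and hence meets the dense set $X_{\mathrm{gen}} \cap W$, its generic orbit has dimension $m_X$, and its generic \emph{polystable} (semistable with closed orbit in $W^{ss}(L)$) orbit likewise has dimension $m_X$. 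The decisive comparison step is to match the two notions of ``closed orbit'' on the overlap: by Examples~\ref{affinequotientisStein} the analytification of $q$ is itself an analytic Hilbert quotient, so the uniqueness of analytic Hilbert quotients together with the universal property of $\pi$ should force, near $x_0$, the locus of orbits closed in $U$ to agree with the locus of orbits that are polystable in $W^{ss}(L)$.

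Granting this identification, a generic polystable orbit of $W^{ss}(L)$ is closed in $U$ and has dimension $m_X$; in particular $m = m_X$, and $U \cap X_{\mathrm{gen}}$ contains an orbit closed in $U$. At this point the hypotheses of Proposition~\ref{prop:rosenlichtandsemistable} are satisfied, so for any Rosenlicht subset $V_R$ of $X$ the proposition furnishes a non-empty $G$-invariant Zariski-open Rosenlicht subset $U_R \subset U \cap V_R$ that is Zariski-open in $V_R$ and contained in $U$. Since $V_R$ is Zariski-open in $X$, the set $U_R$ is Zariski-open in $X$, and it is the desired non-empty $G$-invariant Zariski-open subset of $X$ contained in $U$.

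The main obstacle I anticipate is precisely the comparison step: reconciling the \emph{analytic} condition that an orbit be closed in $U$ with the \emph{algebraic} condition of being polystable for the chosen linearisation on $W^{ss}(L)$, and in particular ruling out that the generic closed-in-$U$ orbit lands in the unstable locus, where $\pi$ and the algebraic quotient $q$ diverge. Controlling this requires understanding how the locally Stein structure of the fibres of $\pi$ interacts with the affine quotient charts of the algebraic good quotient $q$, and it is here that both the completeness of $Q$ as an algebraic space and the existence of the Sumihiro embedding must be used in an essential way.
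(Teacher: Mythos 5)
Your strategy rests on an intermediate claim that is false in general: namely, that the maximal dimension $m$ of orbits closed in $U$ equals $m_X$, i.e.\ that $U \cap X_{\mathrm{gen}}$ contains an orbit closed in $U$. Here is a counterexample. Let $X = \mathbb{C}^2 \times \mathbb{C}^2$, let $G = SL_2(\mathbb{C}) \times \mathbb{C}^*$ act by the standard representation on the first factor and by scalar multiplication on the second, and let $U = \mathbb{C}^2 \times \bigl(\mathbb{C}^2 \setminus \{0\}\bigr)$. Then $\pi\colon U \to \mathbb{P}^1$, $(v,w) \mapsto [w]$, is an analytic Hilbert quotient onto a complete algebraic space, and every point of $X$ has a Sumihiro neighbourhood, since $X$ embeds $G$-equivariantly into $\mathbb{P}(\mathbb{C}^4 \oplus \mathbb{C})$; yet the orbits closed in $U$ are exactly the one-dimensional sets $\{0\} \times \mathbb{C}^* w$, while $m_X = 3$. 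So the hypotheses of the lemma hold (and its conclusion is trivially true), but your claim ``$m = m_X$'' fails. The same example exposes the false GIT step in your argument: density of $W^{ss}(L)$ in $W$ does \emph{not} imply that the generic polystable orbit has generic dimension --- for the standard $SL_2(\mathbb{C})$-action on $\mathbb{C}^2$ every point is semistable for the induced linearisation, but the only closed orbit is the origin. Finally, the ``comparison step'' you flag as the main obstacle is not merely delicate but unavailable in the form you need: $U \cap W$ and $W^{ss}(L)$ are in general different open sets with unrelated quotients, so uniqueness of analytic Hilbert quotients produces no identification of ``closed in $U$'' with ``polystable in $W^{ss}(L)$''.

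The paper's proof is organised precisely around this failure. It first invokes \cite[Lem.~6.2]{KaehlerQuotientsGIT} to produce a $G$-invariant irreducible subvariety $Y \subset X$ --- possibly a \emph{proper} subvariety, whose generic orbit dimension $m_Y$ can be smaller than $m_X$ --- with $Y^{ss}\hq G = Q$ and such that $Y^{ss} \cap Y_{\mathrm{gen}}$ contains an orbit closed in $U$. Proposition~\ref{prop:rosenlichtandsemistable} is then applied to $Y$, not to $X$, and yields a set $A$ of closed-in-$U$ orbits that is Zariski-open only in $Y$, with $\pi(A)$ open in $Q$. The real content of the lemma, for which your proposal has no counterpart, is the passage from $A \subset Y$ to a Zariski-open subset of $X$: inside a Sumihiro neighbourhood $W$ of a point of $A$ one passes to the projective closure $Z$ of $\psi(W)$ in $\mathbb{P}(V)$, uses the Chow-variety Rosenlicht quotient of $Z$ together with \cite[Lem.~7.3]{PaHq} to show that the saturation $\mathcal{S}_G^X(A \cap W) \cap W$ is constructible, and then observes that this saturation contains the Euclidean-open set $\pi^{-1}(\pi(A\cap W)) \cap W$ (and lies in $U$, since $\Sigma$ is closed and $G$-invariant); a constructible set containing a non-empty Euclidean-open set contains a non-empty Zariski-open subset of $X$. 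In other words, the Sumihiro hypothesis is needed for this saturation/constructibility argument, not --- as in your proposal --- to force the existence of closed orbits of generic dimension, which may simply not exist.
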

\begin{proof}
By \cite[Lem.~6.2]{KaehlerQuotientsGIT}, there exists an irreducible $G$-invariant subvariety $Y$ of $X$ such that $Y^{ss}\hq G = Q$ and such that $Y^{ss} \cap Y_{\mathrm{gen}}$ contains an orbit that is closed in $U$. By Proposition~\ref{prop:rosenlichtandsemistable} there exists a $G$-invariant Zariski-open subset $A$ of $Y$ contained in $U$ such that $\pi(A)$ is an open subset of $Q$. Furthermore, $A$ consists of
orbits which are closed in $U$.

Let $W$ be an irreducible Sumihiro neighbourhood of a point $x \in A$ and let $\psi: W \to  \P(V)$ be a $G$-equivariant embedding of $W$ into the projective
space associated with a rational $G$-representation $V$. Let $Z$ be the closure of $\psi(W)$ in $\P(V)$. Since $Z$ is projective, it has a Rosenlicht quotient constructed from the Chow variety of $Z$, see \cite[Sect.~3.1]{KaehlerQuotientsGIT}. Let $U_R$ be such a Rosenlicht subset of $Z$ having the properties stated in Proposition~3.1 of \cite{KaehlerQuotientsGIT}. Then, Lemma 7.3 of \cite{PaHq} implies that $\mathcal{S}^Z_G(\psi(A \cap W))\cap U_R$ is constructible in
$U_R$. Therefore,
$\mathcal{S}_G^X(A \cap W)\cap W$ contains a $G$-invariant Zariski-open subset $\widetilde U$ of its closure. By construction, $\pi(A \cap W)$ is open in $Q$, and hence, $\pi^{-1}(\pi(A \cap W)) \cap W$ is an open subset of $X$ that is contained in
$\mathcal{S}_G^X(A \cap W)\cap W$. We conclude that $\widetilde{U}$ is Zariski-open in
$X$.
\end{proof}
With these preparations at hand, we are now in the position to prove the following openness result.
\begin{prop}\label{prop:SumihiroZopen}
Let $G$ be a connected complex reductive Lie group, $X$ a $G$-variety, and let $U$ be a non-empty $G$-invariant analytically Zariski-open subset of $X$ such that the analytic Hilbert quotient $\pi: U \to Q$ exist and such that additionally $Q$ is a complete algebraic space.
Assume that every point $x \in U$ whose orbit $G\acts x$ is closed in $U$ has a Sumihiro neighbourhood. Then, $U$ is Zariski-open in $X$.
\end{prop}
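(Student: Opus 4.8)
The plan is to upgrade the local openness result of Lemma~\ref{lem:UcontainsZopen}, which already produces \emph{one} non-empty $G$-invariant Zariski-open subset of $X$ contained in $U$, into a genuine global openness statement: $U$ itself is Zariski-open. The natural strategy is a maximality (or exhaustion) argument. First I would let $U_{\mathrm{Zar}}$ denote the union of all $G$-invariant Zariski-open subsets of $X$ that are contained in $U$; by Lemma~\ref{lem:UcontainsZopen} this set is non-empty, and being a union of Zariski-open sets it is itself $G$-invariant and Zariski-open in $X$. Clearly $U_{\mathrm{Zar}} \subseteq U$, and the entire content of the proposition is the reverse inclusion $U \subseteq U_{\mathrm{Zar}}$. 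Equivalently, writing $R \definiere U \setminus U_{\mathrm{Zar}}$ for the (analytically Zariski-closed in $U$) ``residual'' set, I must show $R = \emptyset$.

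The key observation driving the argument is that $U_{\mathrm{Zar}}$ is a $\pi$-saturated open subset of $U$. Indeed, $Q$ is an analytic Hilbert quotient, so two points of $U$ have the same image precisely when their orbit closures in $U$ meet; since the good quotient on the algebraic locus $U_{\mathrm{Zar}}$ is compatible with $\pi$, its image $\pi(U_{\mathrm{Zar}})$ is open in $Q$ and $U_{\mathrm{Zar}} = \pi^{-1}(\pi(U_{\mathrm{Zar}}))$. Suppose for contradiction that $R \neq \emptyset$. Pick a point $x_0 \in R$ and pass inside its fibre to the \emph{unique closed orbit} $G \acts x$ contained in $\pi^{-1}(\pi(x_0))$, using the structure of analytic Hilbert quotients recalled in Section~\ref{sect:goodquotientsintro}. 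Because $U_{\mathrm{Zar}}$ is $\pi$-saturated and $x_0 \notin U_{\mathrm{Zar}}$, this closed orbit also lies in $R$, hence in $U$ but outside $U_{\mathrm{Zar}}$. Now apply the hypothesis of the proposition: this closed orbit carries a Sumihiro neighbourhood, so I can run the argument of Lemma~\ref{lem:UcontainsZopen} centred at $x$ (taking the role of the point $x \in A$ there) to manufacture a $G$-invariant Zariski-open subset $\widetilde{U}$ of $X$ with $x \in \widetilde{U} \subseteq U$. By maximality $\widetilde{U} \subseteq U_{\mathrm{Zar}}$, so $x \in U_{\mathrm{Zar}}$, contradicting $x \in R$. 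Therefore $R = \emptyset$ and $U = U_{\mathrm{Zar}}$ is Zariski-open.

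The step I expect to be the main obstacle is verifying that the construction of Lemma~\ref{lem:UcontainsZopen} genuinely applies at the arbitrary closed orbit $G \acts x \subset R$, rather than only at the special point $x \in A$ produced there from the auxiliary subvariety $Y$. In the lemma the point $x$ lay in a set $A \subseteq U$ of orbits closed in $U$ with $\pi(A)$ open, and the proof used that $\pi(A \cap W)$ is open in $Q$ to conclude that $\pi^{-1}(\pi(A \cap W)) \cap W$ is Euclidean-open and $\mathcal{S}^X_G(A \cap W) \cap W$ contains a Zariski-open subset of its closure. To reuse this I must check that every closed orbit in $U$ — not just those meeting $A$ — serves as a valid starting point: concretely, that the Sumihiro embedding $\psi \colon W \to \mathbb{P}(V)$, the projective closure $Z$, its Rosenlicht subset $U_R$, and the constructibility input from \cite[Lem.~7.3]{PaHq} all go through verbatim with $A \cap W$ replaced by a neighbourhood of the orbit through $x$ inside the closed-orbit locus of $U$. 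This is essentially a matter of confirming that the only properties of the seed set used in Lemma~\ref{lem:UcontainsZopen} are (i) that it consists of orbits closed in $U$ and (ii) that its image in $Q$ is open — both of which hold for any $\pi$-saturated Euclidean-open neighbourhood of a closed orbit, by property~(3) of analytic Hilbert quotients and the openness of $\pi$. Once this local applicability is secured, the maximality mechanism closes the argument cleanly.
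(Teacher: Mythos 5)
Your overall strategy --- reduce the Proposition to producing, for an arbitrarily \emph{prescribed} orbit $G\acts x$ closed in $U$, a $G$-invariant Zariski-open subset of $X$ squeezed between that orbit and $U$ --- founders at exactly the step you flag as the main obstacle, and the obstacle is not a technicality. The mechanism of Lemma~\ref{lem:UcontainsZopen} is inherently \emph{generic}: its seed set $A$ is produced by Proposition~\ref{prop:rosenlichtandsemistable} from a Rosenlicht subset of the auxiliary variety $Y$, so it consists of generic closed orbits, and nothing allows one to force it through a given closed orbit (think of $G\acts x$ a fixed point lying in the closure of the generic closed orbits). Moreover, your claim that the only properties of the seed set used in the Lemma are (i) orbits closed in $U$ and (ii) open image in $Q$ omits a third, essential one: $A\cap W$ is \emph{constructible} (Zariski-open in the algebraic variety $Y$), which is precisely what makes \cite[Lem.~7.3]{PaHq} applicable and yields constructibility of $\mathcal{S}^Z_G(\psi(A\cap W))\cap U_R$. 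A Euclidean-open neighbourhood of $G\acts x$ inside the closed-orbit locus of $U$ is not constructible, so this step collapses. Even granting constructibility, the output $\widetilde U$ is only a Zariski-open subset of the \emph{closure} of the saturation; such a set is again generic and has no reason to contain the prescribed point $x$, so the contradiction ``$x\in\widetilde U\subseteq U_{\mathrm{Zar}}$'' is not obtained. (A smaller point: your justification that $U_{\mathrm{Zar}}$ is $\pi$-saturated is unfounded --- $G$-invariant Zariski-open subsets of $U$ need not be $\pi$-saturated, nor need their union be --- but the fact you actually use, namely that $R$ contains an orbit closed in $U$, follows simply because $R$ is $G$-invariant and analytically closed in $U$, hence contains the unique closed orbit of the fibre through any of its points.)

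The paper sidesteps the genericity problem by Noetherian induction on closed subvarieties rather than by trying to reach every point of $U$: it applies Lemma~\ref{lem:UcontainsZopen} once to an irreducible component $X_{j_0}$ with $X_{j_0}^{ss}\neq\emptyset$, obtains a non-empty $G$-invariant Zariski-open $V_{j_0}\subseteq U$, and passes to the strictly smaller closed subvariety $\widetilde X \definiere X\setminus\bigl(V_{j_0}\setminus\bigcup_{j\neq j_0}X_j\bigr)$, which still contains $U^c$; after checking that the hypotheses are inherited (in particular, $\pi(\widetilde X^{ss})$ is again a complete algebraic space by \cite[IV.Thm.~3.1]{KnutsonAlgebraicSpaces}, and Sumihiro neighbourhoods persist), the inductive hypothesis gives that $\widetilde X^{ss}$ is Zariski-open in $\widetilde X$, i.e.\ $U^c=\widetilde X\setminus\widetilde X^{ss}$ is Zariski-closed in $X$. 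Your maximality framework could be repaired in the same spirit: instead of attacking a single closed orbit in $R$, apply the inductive hypothesis to the closed subvariety $X'\definiere X\setminus U_{\mathrm{Zar}}\subsetneq X$, whose semistable locus is exactly $R$; this yields that $R$ is Zariski-open in $X'$, hence $U^c=X'\setminus R$ is Zariski-closed. As written, however, the key step of your argument fails.
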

\begin{proof}
Let $X = \bigcup_j X_j$ be the decomposition of $X$ into irreducible components. Then, there exists a $j_0 \in \{1, \dots, m\}$ such that $X_{j_0}^{ss}$ is analytically Zariski-open in $X_{j_0}$ and non-empty. Let $V_{j_0}$ be the $G$-invariant Zariski-open subset of $X_{j_0}^{ss}$ whose existence is
guaranteed by the previous Lemma. Set $\widetilde {X} \definiere X \setminus \bigl(V_{j_0} \setminus \bigcup_{j\neq j_0} X_j \bigr)$. Then either $\widetilde{X} = U^c \subset X$ or $\widetilde X^{ss} \neq \emptyset$. In the first case, we are done, since $\widetilde {X}$ is algebraic in $X$. In the second case, we notice that $\widetilde X^{ss}$ is analytically Zariski-open and $G$-invariant in $\widetilde X$, that $\widetilde X^{ss} \hq G = \pi (\widetilde X^{ss}) \subset Q$ is a complete algebraic space by \cite[IV.Thm.~3.1]{KnutsonAlgebraicSpaces}, and that the existence of Sumihiro neighbourhoods is inherited by $\widetilde X^{ss}$. We proceed by Noetherian induction.
\end{proof}
\begin{proof}[Proof of Theorem~\ref{thm:subvarietyalgebraic}]
Using Proposition~\ref{prop:SumihiroZopen} it suffices to note that, since $X$ is assumed to be normal and $G$ to be connected, every point in $X$ has a Sumihiro neighbourhood by the result \cite{completion} which motivated the coining of the term.
\end{proof}

\begin{rem}\label{rem:affineeasier}
 If $X$ is an affine variety (as in the application to spaces in class $\mathscr{Q}_G$, cf.~Section~\ref{sect:Q_G}), the proof is actually much easier: Since $Q$ is compact, we obviously have $\mathscr{O}_Q(Q) = \mathbb{C}$. Because of the natural morphisms
$$\mathscr{O}_X(X)^G \hookrightarrow \mathscr{O}_U(U)^{G} \overset{\cong}{\longrightarrow} \mathscr{O}_Q(Q),$$
$X$ does not admit non-constant invariant holomorphic functions. As a consequence, the good quotient $X\hq G = \{\mathrm{pt.}\}$ (which exists, since $X$ is assumed to be affine) is a singleton. An application of the $G$-equivariant version of Chow's Lemma \cite[Thm.~10.3]{PaHq} (which in this special case also follows easily from \cite[Sect.~5, Satz]{HeinznerFixpunktmengen}) then yields that $\Sigma$ is an algebraic subvariety of $X$.
\end{rem}

\subsection{Algebraicity of the quotient map}\label{subsect:mapalgebraicproof}
We formulate the main result of this section as a separate theorem:
\begin{thm}\label{thm:quotientalgebraic}
Let $G$ be a connected complex reductive Lie group, let $X$ be an irreducible normal algebraic $G$-variety, and let $U$ be a $G$-invariant Zariski-open subset of $X$ such that the analytic Hilbert quotient $\pi\colon U \to Q$ exists. If $Q$ is (the complex space associated with) a complete algebraic space, then the quotient map $\pi$ is algebraic. 
\end{thm}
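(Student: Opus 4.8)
The plan is to realise $\pi$ as a composition of algebraic morphisms by algebraising the closure of its graph inside a suitable compactification. Since $X$ is an algebraic variety, Nagata's theorem provides a complete variety $\ol{X}$ containing $X$ as a dense Zariski-open subset; as $U$ is assumed to be Zariski-open in $X$, it is Zariski-open in $\ol{X}$ as well. The product $\ol{X}\times Q$ is then a complete algebraic space, and its analytification is a compact complex space. Inside $(\ol{X}\times Q)^{an}$ I would consider the graph $\Gamma_\pi \definiere \{(u,\pi(u)) \mid u \in U\}$, which is a closed analytic subset of $(U\times Q)^{an}$ biholomorphic to $U^{an}$ via the first projection, and form its analytic closure $\ol{\Gamma}$ in $(\ol{X}\times Q)^{an}$. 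Being the closure of the irreducible analytic set $\Gamma_\pi$, the set $\ol{\Gamma}$ is an irreducible closed analytic subset of the compact complex space associated with a complete algebraic space; GAGA for complete algebraic spaces (applied as in the proof of Proposition~\ref{prop:rosenlichtandsemistable}) then shows that $\ol{\Gamma}$ is the analytification of a closed algebraic subspace of $\ol{X}\times Q$.

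The next step is to identify $\Gamma_\pi$ with a Zariski-open subset of the now-algebraic $\ol{\Gamma}$. Since $U\times Q$ is Zariski-open in $\ol{X}\times Q$, the intersection $\ol{\Gamma}\cap(U\times Q)$ is Zariski-open in $\ol{\Gamma}$; in particular it is irreducible of dimension $\dim\ol{\Gamma} = \dim U$. On the other hand $\Gamma_\pi$ is closed in $(U\times Q)^{an}$, being the graph of a continuous map into a Hausdorff space, and it is irreducible of the same dimension $\dim U$. Hence the closed irreducible subset $\Gamma_\pi \subset \ol{\Gamma}\cap(U\times Q)$ must coincide with the whole of $\ol{\Gamma}\cap(U\times Q)$. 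It follows that $\Gamma_\pi$ is Zariski-open in $\ol{\Gamma}$, and therefore an algebraic space, and that both projections $\pr_1\colon \Gamma_\pi \to U$ and $\pr_2\colon \Gamma_\pi \to Q$ are morphisms of algebraic spaces.

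It remains to see that $\pr_1\colon \Gamma_\pi \to U$ is an isomorphism, for then $\pi = \pr_2 \circ \pr_1^{-1}$ is algebraic. The morphism $\ol{\Gamma}\to\ol{X}$ is proper, since it factors as the closed immersion $\ol{\Gamma}\hookrightarrow\ol{X}\times Q$ followed by the projection $\ol{X}\times Q\to\ol{X}$, which is proper because $Q$ is complete; its base change along $U\hookrightarrow\ol{X}$ recovers $\pr_1\colon\Gamma_\pi\to U$, which is thus proper. Moreover $\pr_1$ is bijective onto the dense open $U$, hence quasi-finite and generically injective, so it is finite and birational onto the normal variety $U$; by Zariski's main theorem it is an isomorphism. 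This completes the argument. I expect the algebraisation of $\ol{\Gamma}$ to be the main obstacle: everything hinges on having GAGA available for the \emph{non-schematic} complete algebraic space $\ol{X}\times Q$, and on verifying that the elementary closure-and-dimension comparison really forces $\ol{\Gamma}\cap(U\times Q)$ to be exactly the graph, rather than a larger open set over which $\pr_1$ could fail to be injective.
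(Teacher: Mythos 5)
There is a genuine gap, and it sits exactly where your argument does all of its work: the claim that $\ol{\Gamma}$, the \emph{topological} closure of the graph $\Gamma_\pi$ inside $(\ol{X}\times Q)^{an}$, is an analytic subset. The closure of an analytic subset of an open part of a compact complex space is in general not analytic; it is analytic precisely when the holomorphic map in question extends to a \emph{meromorphic} map of $\ol{X}$ into $Q$, and that is essentially the statement you are trying to prove. The Remmert--Stein extension theorem does not help here, because the boundary set $\bigl((\ol{X}\setminus U)\times Q\bigr)^{an}$ typically has dimension far larger than $\dim \Gamma_\pi$. Note that your argument never uses the hypothesis that $\pi$ is an analytic Hilbert quotient of a $G$-variety --- only that it is holomorphic and that $Q$ is a complete algebraic space. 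If the argument were correct as written, it would prove that \emph{every} holomorphic map from a quasi-projective variety to a complete algebraic space is algebraic; this is false, as the map $\mathbb{C}\to\mathbb{P}^1$, $z\mapsto e^z$, shows (its graph closure in $\mathbb{P}^1\times\mathbb{P}^1$ is exactly a non-analytic closure of the kind your step assumes away). The obstacle you flag at the end --- availability of GAGA for non-schematic complete algebraic spaces --- is in fact not the problem: GAGA in this setting is available by Artin's theorem and is used freely in the paper; the problem is that GAGA cannot be applied to $\ol{\Gamma}$ before one knows $\ol{\Gamma}$ is analytic.

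This is precisely why the paper's proof proceeds differently. It first produces, via Proposition~\ref{prop:rosenlichtandsemistable}, a dense Zariski-open subset $W\subset U$ on which $\pi$ coincides with a morphism of algebraic spaces; this step is where the quotient structure enters in an essential way (Rosenlicht's theorem on invariant rational functions, the descent of invariant meromorphic functions to $Q$, and the Chow lemma/GAGA for complete algebraic spaces). Only then does it run a graph argument, Lemma~\ref{lem:holomorphicrational}, in which the relevant closure is the \emph{Zariski} closure of the graph of an already-algebraic morphism $W\to Q$ --- automatically an algebraic subset, with no analyticity-of-closure issue --- and the holomorphic extension $\pi$ is used only to identify the analytification of that Zariski closure with the graph of $\pi$. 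Your subsequent steps (identifying $\Gamma_\pi$ with $\ol{\Gamma}\cap(U\times Q)$ by an irreducibility and dimension count, properness of $\pr_1$ by base change, and Zariski's Main Theorem to invert $\pr_1$ over the normal variety $U$) are sound and close in spirit to the paper's Lemma~\ref{lem:holomorphicrational}, but they cannot be started without the missing algebraic input on a dense open subset.
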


The following lemma implies that in order to proof Theorem~\ref{thm:quotientalgebraic}, it suffices to show that there exists a Zariski-open subset $W \subset U$ such that $\pi|_W\colon W \to Q$ is a morphism of algebraic spaces. While this is certainly well-known, we provide a proof for the reader's convenience.

\begin{lemma}[Regularity of rational holomorphic maps]\label{lem:holomorphicrational}
 Let $U$ be a Zariski-open subset of an irreducible normal algebraic variety $X$, $Y$ an irreducible complete algebraic space, and $f\colon U \to Y$ a morphism of algebraic spaces. If $f^{an}\colon U^{an} \to X^{an}$ extends to a holomorphic map $\overline{f^{an}}\colon X^{an} \to Y^{an}$, then $f $ extends to a morphism $\bar f$ with $(\bar f)^{an} = \overline{f^{an}}$.
\end{lemma}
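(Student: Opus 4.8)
The strategy is to algebraise the holomorphic extension by a graph-closure argument and then to invoke Zariski's Main Theorem. First I would form the graph $\Gamma_f \subset U \times Y$ of the morphism $f$ and let $W \definiere \overline{\Gamma_f}$ be its Zariski closure inside $X \times Y$, equipped with its reduced structure. Then $W$ is an irreducible algebraic space of dimension $n \definiere \dim X$, the restriction of the first projection $p \colon W \to X$ is an isomorphism over $U$ (so $p$ is birational), and, since $Y$ is complete, the projection $X \times Y \to X$ is proper, whence its restriction $p$ to the closed subspace $W$ is proper as well. Write $q \colon W \to Y$ for the second projection.

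Next I would use the holomorphic extension $\overline{f^{an}}$ to produce an analytic section of $p^{an}$. The map $s \colon X^{an} \to X^{an} \times Y^{an}$, $x \mapsto (x, \overline{f^{an}}(x))$, is holomorphic, and over the dense open set $U^{an}$ its image lies in $\Gamma_f^{an} \subset W^{an}$. As $W^{an}$ is analytically closed and $U^{an}$ is dense in $X^{an}$, the image $s(X^{an})$ is contained in $W^{an}$, so $s$ factors as a holomorphic section $s \colon X^{an} \to W^{an}$ of $p^{an}$. Since $p$ is an isomorphism over $U$, the section $s$ carries $U^{an}$ onto $(p^{an})^{-1}(U^{an})$, which is a dense open subset of the irreducible space $W^{an}$; as $s(X^{an})$ is closed and contains this dense open set, we get $s(X^{an}) = W^{an}$. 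Hence $p^{an}$ is bijective with holomorphic inverse $s$, i.e. $p^{an}$ is a biholomorphism.

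It then remains to descend this to the algebraic category. Since $p^{an}$ is bijective, $p$ has finite fibres, so the proper morphism $p$ is quasi-finite and therefore finite. In particular $p$ is affine, whence $W$ is in fact a scheme, finite and birational over the normal variety $X$; consequently $p_* \mathscr{O}_W$ is a finite $\mathscr{O}_X$-subalgebra of the function field $\C(X)$, and normality of $X$ forces $p_* \mathscr{O}_W = \mathscr{O}_X$, so that $p$ is an isomorphism (this is Zariski's Main Theorem). Setting $\bar f \definiere q \circ p^{-1} \colon X \to Y$ yields a morphism of algebraic spaces extending $f$, and $(\bar f)^{an} = q^{an} \circ (p^{an})^{-1} = q^{an} \circ s = \overline{f^{an}}$, as required.

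The main obstacle is making sure that these otherwise standard manipulations — graph closure, the comparison of Zariski and analytic density, and Zariski's Main Theorem together with the normality argument — remain valid when $Y$, and a priori $W$, are only algebraic spaces rather than schemes. The decisive point is that properness and quasi-finiteness of $p$ force $p$ to be finite, hence affine, so that $W$ is actually a scheme over $X$; this is what allows the classical normality argument to apply and is where the hypothesis that $X$ be normal enters essentially.
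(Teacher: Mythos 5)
Your proposal is correct, and its skeleton coincides with the paper's own argument: both form the Zariski closure $W = \overline{\Gamma_f} \subset X \times Y$ of the graph, use the holomorphic extension to see that $(p^{an})$ is a biholomorphism (the paper states this tersely as ``$(\Gamma_f)^{an}$ is the graph of $\overline{f^{an}}$'', whereas your section argument via $s(x) = (x, \overline{f^{an}}(x))$ spells out why), deduce that the proper projection $p$ is quasi-finite and hence finite, and then invoke affineness to conclude via \cite[Cor.~II.6.16]{KnutsonAlgebraicSpaces} that $W$ is a scheme. The two proofs diverge only in the concluding step. The paper takes the holomorphic inverse $\eta = (p^{an})^{-1}\colon X^{an} \to W^{an}$, observes that it is induced by a rational map, and cites the lemma of \cite[VIII.3]{ShafarevichII} that a rational map which is holomorphic is regular; normality of $X$ enters through the hypotheses of that lemma. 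You instead finish with the classical integral-closure form of Zariski's Main Theorem: $p$ is finite and birational onto the normal variety $X$, so $p_*\mathscr{O}_W = \mathscr{O}_X$ and $p$ is an isomorphism. Your route is somewhat more self-contained, as it replaces the comparison-type citation by a standard commutative-algebra argument and makes the role of normality completely explicit; the paper's route, by contrast, re-uses the analytic information a second time (to know $\eta$ is holomorphic) and would apply verbatim in situations where one has a holomorphic rational inverse but has not arranged birationality of the graph projection. Both uses of the analytic hypothesis are essential in the same place, namely to obtain injectivity, hence quasi-finiteness, of $p$, without which properness alone would not rule out exceptional fibres.
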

\begin{proof}
 Let $\Gamma_f \subset X \times Y$ be the Zariski-closure of the graph of $f$ with the two projections $\bar f \definiere \mathrm{pr}_2|_{\Gamma_f}\colon \Gamma_f \to Y$ and $\pi := \mathrm{pr}_1|_{\Gamma_f}\colon \Gamma_f \to X$. The claim of the lemma is equivalent to $\pi$ being biregular. 

As a first observation, note that $(\Gamma_f)^{an}$ is the graph of $\overline{f^{an}}$. It follows that $\pi$ is one-to-one, hence quasi-finite. On the other hand, as $Y$ is complete, $\pi$ is proper, and therefore finite. In particular, $\pi$ is an affine morphism. Consequently, it follows from \cite[Cor.~II.6.16]{KnutsonAlgebraicSpaces} that $\Gamma_f$ is a scheme.  Now, as $(\Gamma_f)^{an}$ is the graph of $\overline{f^{an}}$, the map $\pi^{an}$ is biholomorphic. Let $$\eta:= (\pi^{an})^{-1}\colon X^{an} \to (\Gamma_f)^{an}$$ be its holomorphic inverse, and note that this is a map between (the complex spaces associated with) two algebraic varieties. As $f\colon U \to Y$ is a morphism of algebraic spaces, $\eta$ is not only holomorphic, but also induced by a rational map from $X$ to $\Gamma_f$, and hence regular by \cite[Lem.~in VIII.3]{ShafarevichII}.
\end{proof}

With these preparations in place, the reasoning of \cite[Sections~7.1 and 7.2]{KaehlerQuotientsGIT} now applies mutatis mutandis (replacing the use of GAGA for complete algebraic varieties \cite{GAGA} by GAGA for complete algebraic spaces (\cite[Thm.~7.3]{ArtinAlgebraizationII} or \cite[Thm.~IV.3.1]{KnutsonAlgebraicSpaces}) and \cite[Prop.~6.3]{KaehlerQuotientsGIT} by Proposition~\ref{prop:rosenlichtandsemistable}). This concludes the proof of Theorem~\ref{thm:quotientalgebraic}. \hfill \qed

\subsection{The quotient map is affine}\label{subsect:mapaffineproof}
In general, it is rather delicate to decide whether a given categorical quotient map is affine, as the following well-known example \cite[Ex.~0.4]{MumfordGIT} shows.
\begin{ex}\label{ex:notaffine}
Let $X = \mathbb{C}^2 \setminus \{(0,0)\}$ be endowed with the standard action of $SL_2(\mathbb{C})$ (coming from its defining representation. Let $T$ be the diagonal maximal torus in $SL_2(\mathbb{C})$, and $N < SL_2(\mathbb{C})$ be its normaliser. The action of $N$ on $\mathbb{C}^2$ has a good quotient $\hat \pi\colon \mathbb{C}^2 \to \mathbb{C} =: Q$, given by $\pi\colon \mathbb{C}^2 \to \mathbb{C}, (z,w) \mapsto (zw)^2$. The restriction $\pi\definiere \hat\pi|_{X}\colon X  \to Q$ is a geometric categorical quotient for the action of $N$ on $X$. Since $X = \mathbb{C}^2\setminus \{(0,0)\}$ is not affine, $\pi$ is not a good quotient. This example can be made into an example for a connected group by considering $SL_2(\mathbb{C}) \times_N X \to Q$, which is a geometric quotient for the induced action of $SL_2(\mathbb{C})$.
\end{ex}
Note that at the same time that one proves that the quotient map in Example~\ref{ex:notaffine} is not affine, one proves that it is not (locally) Stein. Theorem~\ref{thm:affine} below can be interpreted as saying that for quotients by reductive group actions there are not more obstructions to being affine than there are for being Stein. For general morphisms of algebraic varieties this is of course far from being true, as classical examples \cite[Chap.~6, \S 3]{HartshorneAmple} of non-affine quasi-projective Stein varieties show.

\begin{thm}\label{thm:affine}
 Let $G$ be a connected complex reductive Lie group, let $X$ be an irreducible normal algebraic variety, and let $U$ be a $G$-invariant Zariski-open subset of $X$ with analytic Hilbert quotient $\pi\colon U \to Q$. Assume that
\begin{enumerate}
 \item[(a)]  $Q$ is (the complex space associated with) a complete algebraic space, and
 \item[(b)] $\pi$ is a morphism of algebraic spaces.
\end{enumerate}
  Then, $\pi$ is an affine map.
\end{thm}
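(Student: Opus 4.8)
The plan is to verify affineness \emph{\'etale-locally on the target}, thereby reducing to the case of an \emph{affine} base, where the analytic Hilbert quotient structure forces the total space to be Stein; the genuine content is then an algebraic ``Stein $\Rightarrow$ affine'' statement for reductive quotients.

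First I would invoke Lemma~\ref{lem:etaleaffine}. Since $Q$ is an algebraic space, it admits an \'etale covering $\{\eta_\alpha\colon \hat Y_\alpha \to Q\}$ by affine varieties $\hat Y_\alpha$. Writing $\hat\pi_\alpha\colon \hat U_\alpha \definiere \hat Y_\alpha \times_Q U \to \hat Y_\alpha$ for the base change of $\pi$, the lemma reduces the theorem to showing that each $\hat\pi_\alpha$ is an affine morphism of schemes. Here $\hat U_\alpha$ is again a normal algebraic variety, being \'etale over the normal variety $U$. The crucial point of this reduction is that an \'etale morphism of algebraic spaces analytifies to a local biholomorphism; hence $\hat\pi_\alpha^{an}\colon \hat U_\alpha^{an} \to \hat Y_\alpha^{an}$ is obtained from the analytic Hilbert quotient $\pi^{an}$ by base change along a local biholomorphism, and is therefore itself an analytic Hilbert quotient. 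As $\hat Y_\alpha$ is affine, $\hat Y_\alpha^{an}$ is Stein, and property~(2) of analytic Hilbert quotients (Section~\ref{sect:goodquotientsintro}) shows that the total space $\hat U_\alpha^{an} = (\hat\pi_\alpha^{an})^{-1}(\hat Y_\alpha^{an})$ is Stein.

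We are thus reduced to the following local assertion: a normal $G$-variety $W$ carrying a $G$-invariant algebraic morphism $p\colon W \to S$ onto an affine variety $S$, whose analytification $p^{an}$ is an analytic Hilbert quotient (so that $W^{an}$ is Stein), is affine. I would establish this by producing the \emph{algebraic} good quotient of $W$ and identifying it with $S$. Using that $W$ is normal and $G$ connected, Sumihiro's theorem \cite{completion} provides $G$-equivariant embeddings of neighbourhoods into projective spaces $\mathbb{P}(V)$ for rational $G$-representations $V$; passing to the projective closure $Z$ I would run Geometric Invariant Theory relative to the affine base $S$. The Steinness of $W^{an}$ over the Stein base $S^{an}$ should force $W$ to lie in the relatively semistable locus and, more importantly, force the associated GIT quotient to be affine over $S$ --- equivalently, to be cut out by $G$-invariant functions (degree-zero invariants) rather than by invariant sections of positive powers of the linearisation. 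This produces a $G$-invariant affine morphism out of $W$, whence Theorem~\ref{thm:BBcriterion} yields an algebraic good quotient $W \to W\hq G$. By Example~\ref{affinequotientisStein}.2 its analytification is an analytic Hilbert quotient of $W^{an}$, hence coincides with $p^{an}\colon W^{an}\to S^{an}$ by uniqueness; therefore $W\hq G \cong S$ and $p$ is the (affine) good-quotient map. Globalising over the \'etale cover via Lemma~\ref{lem:etaleaffine} then completes the argument.

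The main obstacle is precisely this last ``Stein $\Rightarrow$ affine'' upgrade. It cannot hold for arbitrary morphisms, since there exist non-affine quasi-projective varieties that are Stein; one must genuinely exploit the reductive quotient structure. The delicate step is to control the relative GIT quotient over $S$ and to prove that it is \emph{affine} over $S$ rather than merely projective over $S$ --- that is, that the analytic affineness (Steinness) of the quotient is reflected algebraically by the vanishing of invariant sections in positive degrees, so that no ``projective directions'' survive. A secondary technical difficulty is arranging the Sumihiro embeddings to be compatible with $\pi$-saturation, so that the projective closure sees all closed orbits of $W$; this is where the Rosenlicht-subset machinery assembled in the preceding sections (cf.\ Proposition~\ref{prop:rosenlichtandsemistable}) is brought to bear.
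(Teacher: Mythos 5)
Your étale-local skeleton coincides with the paper's: both arguments reduce affineness of $\pi$ to affineness of its base change along an étale cover of $Q$ by affine varieties, via Lemma~\ref{lem:etaleaffine}, and your preliminary observations are correct (the charts $\hat U_\alpha$ are normal varieties, étale base change of an analytic Hilbert quotient is again an analytic Hilbert quotient, and property~(2) of such quotients makes $\hat U_\alpha^{an}$ Stein over the Stein base $\hat Y_\alpha^{an}$). But the proof stops exactly where the real difficulty begins. For an \emph{arbitrary} affine étale chart you must still prove that $\hat\pi_\alpha$ is affine --- your ``Stein $\Rightarrow$ affine'' upgrade --- and this is never established; you concede it is the main obstacle and offer only the hope that relative GIT will produce it. Moreover, the sketch for this step has concrete structural defects. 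First, the Sumihiro embeddings land in $\mathbb{P}(V)$, and the projective closure $Z$ of the image carries no morphism to $S$ (the map $p$ does not extend to $Z$), so there is no ``GIT relative to the affine base $S$'' to run. Second, invoking Theorem~\ref{thm:BBcriterion} is circular: that criterion requires as input a $G$-invariant \emph{affine} morphism out of $W$, which is precisely what you are trying to construct; the only candidate in sight is $p$ itself. Third, as you yourself note, Steinness of an algebraic variety does not imply affineness, so some mechanism genuinely exploiting the reductive quotient structure is indispensable --- and none is supplied.

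The paper resolves this by \emph{not} taking an arbitrary étale cover: it first proves a Luna-type étale slice theorem (Proposition~\ref{prop:LunaSlice}), producing for each $q \in Q$ an affine model $T = G\times_H S$ (with $H$ the reductive stabiliser of a point on the closed orbit over $q$) together with étale maps $\varphi\colon T \to U$ and $\overline{\varphi}\colon T\hq G \to Q$ such that the natural map $\psi \colon T \to T\hq G\times_Q U$ is \emph{biregular}. This identification is where all the work lies: constructibility of the sets of closed orbits via the Luna stratification (the only point where completeness of $Q$ enters), the fact that $\varphi$ maps closed orbits to closed orbits after shrinking, surjectivity of $\psi$, and Zariski's Main Theorem. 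With this choice of cover, the base change of $\pi$ to $T\hq G$ is identified with the invariant-theoretic quotient $T \to T\hq G$ of an affine $G$-variety, so its affineness is automatic, and Lemma~\ref{lem:etaleaffine} finishes the proof. In short: your reduction is sound but merely transfers the entire difficulty to an unproven local statement, whereas the paper chooses the étale cover so that the local statement becomes trivial.
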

The main ingredient in the proof of Theorem~\ref{thm:affine} is the following \'etale slice theorem of Luna type, cf.~\cite{LunaSlice}.
\begin{prop}\label{prop:LunaSlice}
Let $G$ be a connected complex reductive Lie group, let $U$ be a normal irreducible algebraic $G$-variety with analytic Hilbert quotient $\pi \colon U \to Q$. Assume that
\begin{enumerate}
 \item[(a)] $Q$ is (the complex space associated with) a complete algebraic space, and
 \item[(b)] $\pi\colon U \to Q$ is a morphism of algebraic varieties.
\end{enumerate}
 Then, for every point $q \in Q$ there exists a normal irreducible affine $G$-variety $T$ and a $G$-equivariant morphism $\varphi: T \to U$ such that $\varphi$ and the induced morphism $\overline{\varphi}\colon T\hq G \to Q$ have the following properties:
\begin{enumerate}
 \item the image of $\varphi$ is a Zariski-open $\pi$-saturated neighbourhood of $\pi^{-1}(q)$,
 \item both $\varphi$ and $\overline{\varphi}$ are \'etale,
 \item the induced morphism $T \to T\hq G \times_Q U$ is biregular. 
\end{enumerate}
\end{prop}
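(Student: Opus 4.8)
The plan is to adapt Luna's \'etale slice theorem \cite{LunaSlice} to the present mixed algebraic--analytic setting, where the target $Q$ is only an analytic Hilbert quotient rather than an algebraic good quotient. Fix $q \in Q$. Its fibre $\pi^{-1}(q)$ contains a unique closed $G$-orbit $G\acts x_0$, and $H \definiere G_{x_0}$ is reductive. Since $U$ is normal and $G$ is connected, Sumihiro's theorem \cite{completion} furnishes a $G$-invariant quasi-projective neighbourhood of $G\acts x_0$ and a $G$-equivariant embedding into some $\P(V)$; write $x_0 = [v_0]$. As $H$ fixes $[v_0]$ and is reductive, the line $\C v_0$ has an $H$-stable complement, hence there is an $H$-semi-invariant linear form $\ell$ with $\ell(v_0)\neq 0$, and $\{\ell\neq 0\}$ meets $U$ in an $H$-invariant affine open neighbourhood $U_1$ of $x_0$. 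Embedding the affine $H$-variety $U_1$ equivariantly into an $H$-module $W$ with $x_0\mapsto 0$, I choose (again by reductivity of $H$) an $H$-stable complement $N$ to $T_{x_0}(G\acts x_0) = \g/\h$ inside $T_{x_0}U = T_{x_0}U_1$, and construct an $H$-invariant, irreducible, locally closed affine slice $S\subset U_1$ through $x_0$ with $T_{x_0}S = N$. Then $T \definiere G\times_H S$ is an affine $G$-variety (affine since $G$ and $S$ are affine and $H$ is reductive), and $\varphi\colon T \to U$, $[g,s]\mapsto g\acts s$, is a $G$-equivariant morphism whose differential at $[e,x_0]$ is an isomorphism by the choice of $N$; normality and irreducibility of $T$ will follow from those of $U$ once $\varphi$ is shown to be \'etale.

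Next I produce the map on quotients. Since $T$ is affine with algebraic good quotient $T\hq G = S\hq H$, Example~\ref{affinequotientisStein}.2 identifies $(T\hq G)^{an}$ with the analytic Hilbert quotient of $T^{an}$. The $G$-invariant holomorphic map $\pi\circ\varphi^{an}$ therefore descends, by the universal property of analytic Hilbert quotients, to a holomorphic $\overline\varphi\colon T\hq G \to Q$; assumptions (a) and (b) together with Lemma~\ref{lem:holomorphicrational} (and GAGA for complete algebraic spaces) show that $\overline\varphi$ is a morphism of algebraic spaces. What remains is to upgrade the infinitesimal statement at $[e,x_0]$ to properties (1)--(3), after shrinking $S$ to a suitable $H$-saturated affine neighbourhood of $x_0$.

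The crux, and the main obstacle, is a Luna-type fundamental lemma valid for analytic Hilbert quotients. Luna's classical lemma does not apply verbatim here, because $U$ is not affine and $Q$ is not an algebraic good quotient; the usual affine/good-quotient machinery on the target is unavailable. The strategy is to establish all three assertions analytically and then transfer them back to the algebraic category. Analytically, $\varphi^{an}$ is \'etale at $[e,x_0]$ and carries the closed orbit $G\acts[e,x_0]\cong G/H$ biholomorphically onto $G\acts x_0\subset\pi^{-1}(q)$; Heinzner's analytic slice theorem for analytic Hilbert quotients \cite{HeinznerGIT, SemistableQuotients} then yields, after shrinking $S$, that $\varphi^{an}$ is biholomorphic onto a $\pi$-saturated open neighbourhood of $\pi^{-1}(q)$, that $\overline\varphi^{an}$ is biholomorphic onto an open neighbourhood of $q$, and that the natural map $T^{an}\to (T\hq G)^{an}\times_Q U^{an}$ is biholomorphic. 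Finally these analytic isomorphisms are promoted to the algebraic setting: a morphism of algebraic spaces that is \'etale at a point and induces an analytic local isomorphism is \'etale there (giving (2) and the \'etaleness in (1)), its image is then the Zariski-open $\pi$-saturated set detected analytically (completing (1)), and a morphism of algebraic spaces that is analytically biholomorphic is biregular, exactly as in the graph argument of Lemma~\ref{lem:holomorphicrational}, which yields (3). Reconciling the algebraic slice with the analytic quotient structure and controlling the saturation through Heinzner's slice theorem and GAGA for complete algebraic spaces is the delicate point on which the whole argument turns.
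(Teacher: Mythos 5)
Your construction of the slice $S$, the induced $G$-variety $T = G\times_H S$, and the descent of $\pi\circ\varphi^{an}$ to an algebraic $\overline\varphi$ all match the paper's setup. But the paragraph you call the crux contains a genuine error that breaks the argument. You claim that Heinzner's analytic slice theorem yields, ``after shrinking $S$'', that $\varphi^{an}$ is biholomorphic onto a $\pi$-saturated open neighbourhood of $\pi^{-1}(q)$ and that $\overline\varphi^{an}$ is biholomorphic near $q$. This is only true after shrinking $S$ in the \emph{Euclidean} topology, which destroys the algebraicity of $S$ and $T$; for the algebraic slice it is simply false. With only Zariski-open shrinking, $\varphi$ and $\overline\varphi$ are \'etale but in general \emph{not} injective: for $G = \mathbb{C}^*$ acting on $\mathbb{C}^2$ with weights $(1,-1)$ and the closed orbit through $(1,1)$, any algebraic slice map $S\hq H \to Q$ is of degree $2$, and this persists on every Zariski-dense open subset of $S$. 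This is precisely why the proposition asserts only \'etaleness in (2) rather than an open embedding. Since your claimed analytic biholomorphisms do not exist for the algebraic $T$, there is nothing for your final ``promotion to the algebraic category'' step to promote, and (1) and (3) are left unproven.

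What is missing is exactly the content of the paper's proof, which uses the analytic local isomorphism only on a Euclidean-open, $\pi_T$-saturated set $\widetilde V \subset T$ and then does genuinely algebraic work. First, one observes that $\pi$ maps $G$-invariant Zariski-closed subsets of $U$ to Zariski-closed subsets of $Q$ (Chevalley constructibility plus the universality of analytic Hilbert quotients with respect to invariant analytic sets); this is what permits shrinking $S$ so that $\varphi(T)$ is Zariski-open \emph{and} $\pi$-saturated, a point your proposal never addresses. Second — the hardest step, entirely absent from your proposal — one shows that after further shrinking, $\varphi$ maps closed $G$-orbits to closed $G$-orbits. This requires proving that the sets $\mathscr{C}_T$ and $\mathscr{C}_U$ of closed orbits are constructible: for $\mathscr{C}_T$ via Luna's theorem applied to the good quotient $\pi_T$, and for $\mathscr{C}_U$ via the Luna stratification of $Q$, whose strata are Zariski-locally closed because $Q$ is a complete algebraic space (this is where the completeness hypothesis enters). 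Third, the closed-orbit property is what makes $\psi\colon T \to T\hq G\times_Q U$ \emph{surjective}; combined with quasi-finiteness, normality of the fibre product (from \'etaleness of $\overline\varphi$ and normality of $U$), and a degree computation $\deg(\psi)=1$ carried out on $\widetilde V$, Zariski's Main Theorem then gives that $\psi$ is biregular, i.e.\ assertion (3). None of these steps can be replaced by a GAGA-type transfer of the kind you invoke, because the statements to be transferred are false analytically for the algebraic slice.
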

\begin{rem}\label{rem1}
 In the setup of Proposition~\ref{prop:LunaSlice} it follows in particular that the morphism $T \hq G \times_Q U \to T\hq G$ (the base change of $\pi$ from $Q$ to $T\hq G$) is affine.
\end{rem}
Morphisms satisfying properties $(1) - (3)$ of Proposition~\ref{prop:LunaSlice} are often called \emph{excellent}, e.g.~see \cite[p.~209]{PopovVinberg}.

\begin{rem}
 Note that traditionally Luna's Slice Theorem \cite{LunaSlice} is established \emph{assuming the existence} of a good quotient $X \to Q$. Here, we reverse the approach and (using the existence of an analytic Hilbert quotient) establish the slice theorem first, from which the existence of a good quotient will follow; cf.~the strategy pursued in \cite{HeinznerGIT}.
\end{rem}
\vspace{-0.3cm}

\begin{proof}[Proof of Proposition~\ref{prop:LunaSlice}]
Let $p \in \pi^{-1}(q)$ such that $G\acts p$ is the uniquely determined closed $G$-orbit in $\pi^{-1}(q)$. Since the isotropy group $H \definiere G_p$ of $p$ is reductive, cf.~Section~\ref{sect:goodquotientsintro}, it follows as in the proof of \cite[Thm.~5.6]{PaHq} that there exists a Zariski-locally closed $H$-stable affine subvariety $S$ of $U$ with the following property: if $\varphi \colon G \times_H S \to U$ is the natural induced $G$-equivariant morphism, and $\pi_T\colon G \times_H S \to T\hq G \cong S\hq H$  denotes the good quotient of the affine $G$-variety $G\times_HS$ by $G$, then there exists a $\pi_T$-saturated open neighbourhood $\widetilde V$ of $[e,p]$ in $G\times_H S$ such that 
\begin{equation}\label{eq:text}
\text{$\varphi|_{\widetilde V}$ is biholomorphic, and $\varphi(\widetilde V)$ is a $\pi$-saturated open neighbhd.~of $G\acts p$ in $U$.} 
\end{equation}
We set $T\definiere G \times_HS$ and summarise our situation in the following commutative diagram:
\begin{equation}\label{eq:diagram1}
\begin{gathered}
\begin{xymatrix}{
     T \ar[r]^{\varphi}\ar[d]_{\pi_T} &  U\ar[d]^\pi  \\
   T\hq G \ar[r]^{\overline{\varphi}}& Q.
}
  \end{xymatrix}
\end{gathered}
\end{equation}
We note that $\pi$ maps $G$-invariant Zariski-closed subsets of $U$ to Zariski-closed subsets of $Q$: indeed, if $A$ is any $G$-invariant algebraic subset of $U$, the image $\pi(A) \subset Q$ is both constructible (by Chevalley's Theorem), and an analytic subset in $Q$ (since $\pi$ is universal with respect to $G$-invariant analytic subsets of $U$, cf.~Section~\ref{sect:goodquotientsintro}), and therefore Zariski-closed. 

The previous observation allows us to make the following reduction steps: Shrinking $S$ if necessary, we may assume that $T$ is normal and irreducible, and that $\varphi(T) \subset U$ is $\pi$-saturated and Zariski-open in $U$. Since both $\varphi$ and $\overline{\varphi}$ are \'etale near $G\acts [e, p]$, and $\pi_S([e,p])$, respectively, shrinking $T$ further, we may additionally assume that both $\varphi$ and $\overline{\varphi}$ are \'etale. 

We are aiming to show that after shrinking $T$ the map $\varphi$ maps closed $G$-orbits to closed $G$-orbits. 

Let $\mathscr{C}_T$ and $\mathscr{C}_U$ denote the set of closed orbits in $T$ and $U$, respectively. Since $\pi_T\colon~T \to T\hq G$ is a good quotient, it follows from the Luna Slice Theorem that $\mathscr{C}_T$ is a constructible subset of $T$. Moreover, we claim that $\mathscr{C}_U$ is likewise constructible in $U$. In order to see this, let $\{S_\gamma\}_{\gamma \in I}$ be the Luna stratification of $Q$ as an analytic Hilbert quotient, see \cite[Sect.~1.2]{ReductionInStepsBook} or \cite[Sect.~2]{Extensionofsymplectic}. Then, as $Q$ is a complete algebraic space, $\{S_\gamma\}_{\gamma \in I}$ is a stratification by Zariski-locally closed subsets of $Q$ (this is the only place where our argument uses the assumption on completeness of $Q$, which can probably be avoided by a finer analysis of the situation). Consequently, each inverse image $\pi^{-1}(S_\gamma)$ is Zariski-locally closed in $U$. The closed orbits in $\pi^{-1}(S_\gamma)$ are exactly the orbits of minimal dimension in $\pi^{-1}(S_\gamma)$. It follows that $\mathscr{C}_U \cap \pi^{-1}(S_\gamma)$, which coincides with the set of closed orbits in $\pi^{-1}(S_\gamma)$, is Zariski-closed in $\pi^{-1}(S_\gamma)$. We deduce that $\mathscr{C}_U$ is constructible, as claimed. We note that both $\mathscr{C}_T$ and $\mathscr{C}_U$ are connected, as the corresponding quotients $T\hq G$ and $Q$ are connected. 

Since $\varphi$ is an algebraic morphism, $\varphi(\mathscr{C}_T) \cap \mathscr{C}_U$ is a constructible subset of $\mathscr{C}_U$. As locally near $[e, p] \in T$ in the Euclidean topology, the morphism $\varphi$ maps closed $G$-orbits to closed $G$-orbits by \eqref{eq:text}, the intersection $\varphi(\mathscr{C}_T) \cap \mathscr{C}_U$ contains a $G$-invariant open neighbourhood of $\varphi(p)$ in $\mathscr{C}_U$ and therefore a $G$-invariant Zariski-open neighbourhood of $p$ in $\mathscr{C}_U$. Consequently, there exists a Zariski-open $G$-invariant neighbourhood $U_\mathscr{C}$ of $[e, p]$ in $\mathscr{C}_T$ that is mapped into $\mathscr{C}_U$ by $\varphi$. The saturation $\mathcal{S}_G^T(U_\mathscr{C})$ is a Zariski-open and $\pi_T$-saturated neighbourhood of $p$ in $T$, and by construction every closed $G$-orbit in $\mathcal{S}_G^T(U_\mathscr{C})$ is mapped to a closed $G$-orbit in $U$. We may therefore assume that $T$ itself has this property, which we will do from now on.

Next we consider the natural map $\psi\colon T \to T\hq G \times_Q U, t \mapsto (\pi_T(t), \varphi(t))$, sitting inside the following extended version of diagram~\eqref{eq:diagram1}:
\[ 
\xymatrix{
T \ar@/_/[ddr]_{\pi_T} \ar@/^/[drr]^\varphi \ar@{->}[dr]|-{\psi} \\
   & T\hq G \times_Q U \ar[d]_<<<{\mathrm{pr}_1} \ar[r]_<<<<{\mathrm{pr}_2} & U \ar[d]^{\pi}\\
   & T\hq G \ar[r]^{\overline{\varphi}}& Q. }
\]

Note that $T\hq G \times_Q U$ is a scheme; as we will see below, it is actually normal and in particular reduced. We claim that $\psi$ is surjective. Let $(\pi_T(t), u) \in T\hq G \times_Q U$ be any point. Without loss of generality, we may assume that the $G$-orbit of $t$ is closed in $T$. It suffices to show the $\pi_T$-fibre $F_t$ through $t$ surjects via $\varphi$ onto the $\pi$-fibre $F_u$ through $u$. For this, we notice that on the one hand, as $\varphi$ maps closed orbits to closed orbits, $\varphi(G \acts t)$ coincides with the closed $G$-orbit $G\acts u'$ in $F_u$ (which might be different from $G\acts u$) and on the other hand, as $\varphi$ is \'etale and hence open, a small $K$-invariant Euclidean neighbourhood $N_T$ of $G\acts t$ is mapped onto a $K$-invariant open neighbourhood $N_U$ of $G\acts u'$ in $U$. Moreover, since the closure of every $G$-orbit in $F_u$ contains $G\acts u'$, and since $\varphi(T)$ is $G$-invariant, we have $F_u \subset \varphi(G\acts N_T)$. As $\bar \varphi$ is \'etale, possibly after shrinking $N_T$, we know that $\varphi^{-1}(F_u) \cap G\acts N_T \subset F_t$. We conclude that $\varphi(F_t) = F_u$.

Surjectivity of $\psi$ and irreducibility of $T$ together imply that the fibre product is irreducible. Since $\overline{\varphi}$ is \'etale, $\mathrm{pr}_2$ is \'etale. This has the following consequences:
\begin{itemize}
 \item Since $U \subset X$ is  normal, the fibre product $T\hq G \times_Q U$ is likewise normal.
 \item Since $\varphi$ can be factored as $\varphi = \mathrm{pr}_2\circ \psi$, the map $\psi$ is quasi-finite.
\end{itemize}
We claim that the degree $\deg(\psi)$ of the quasi-finite morphism $\psi$ is equal to one, which will imply that $\psi$ is birational. Indeed, let $\widetilde V$ be as in \eqref{eq:text} above, let $V \definiere \pi_T(\widetilde V)$, $\varphi(\widetilde V) \definiere \widetilde W$, $\overline{\varphi}(V) = \pi(\widetilde W) =: W$, and denote the inverse mapping of $\varphi|_{\widetilde V}$ by $\eta\colon \widetilde W \to \widetilde V$. We note that $\mathrm{pr}_1^{-1}(V) = V \times_W \widetilde W$, and that $\psi|_{\widetilde V}\colon \widetilde V \to \mathrm{pr}_1^{-1}(V)$ is biholomorphic with holomorphic inverse given by $(\pi_T(t), x) \mapsto \eta (x)$. It follows that for any point $z$ in the open subset $\mathrm{pr}_1^{-1}(V) \subset T\hq G \times_Q U$ we have $\mathrm{deg}(\psi) = |\psi^{-1}(z)| = 1$.

Summing up, we have proven that $\psi$ is a surjective birational quasi-finite map from $T$ to the normal irreducible variety $T\hq G \times_Q U$. Zariski's Main Theorem \cite[p.209, Original form]{RedBook} hence implies that $\psi$ is biregular, as claimed.
\end{proof}

\begin{proof}[Proof of Theorem~\ref{thm:affine}] 
Let $G$, $X$, and $\pi\colon U \to Q$ be as in the statement of Theorem~\ref{thm:affine}. Let $q \in Q$ be given, and let $\varphi\colon T \to U$ and $\bar\varphi: T\hq G \to Q$ be as in Proposition~\ref{prop:LunaSlice}. Then, $\bar\varphi\colon T\hq G \to Q$ gives an affine \'etale neighbourhood of $q$ in $Q$ such that the induced base change $T\hq G \times_Q U \to T\hq G$ of $\pi$ is an affine morphism of varieties, see Remark~\ref{rem1}. Hence, the algebraic space $Q$ has a covering in the \'etale topology such that the base change of $\pi$ to any open set of the covering is an affine map (of algebraic varieties). Consequently, $\pi$ itself is affine by Lemma~\ref{lem:etaleaffine}.
 \end{proof}

\subsection{Proof of (\ref{mainthm}.3), end of proof of Theorem~\ref{mainthm}}\label{subsect:proofaffine}
Since the map $\pi\colon U \to Q$ is a $G$-invariant affine morphism of algebraic spaces, we are in the position to apply Theorem~\ref{thm:BBcriterion} to conclude that there exists a good quotient $p\colon U  \to \hat Q$. Since the induced map $p^{an}\colon U^{an} \to \hat Q^{an}$ of complex spaces is an analytic Hilbert quotient for the action of $G$, see Example~\ref{affinequotientisStein}.2, there exists a biholomorphic map $\psi\colon Q \to \hat Q^{an}$, which by GAGA \cite[Thm.~7.3]{ArtinAlgebraizationII} is induced by an isomorphism of algebraic spaces, making the following diagram commutative:
\[\begin{xymatrix}{
   &  U \ar[rd]^{p^{an}} \ar[ld]_\pi&   \\
 Q   \ar[rr]_{\psi}^{\cong} &   &  \hat Q^{an}.
}
  \end{xymatrix}
\]
Hence, $\pi$ is (the holomorphic map associated with) a good quotient. This concludes the proof of Theorem~\ref{mainthm}. \hfill \qed

\subsection{Examples}\label{subsect:examples}
In this section we discuss two examples which show in which sense the statement of Theorem~\ref{mainthm} is optimal.

While good quotients of normal algebraic varieties by complex tori are (normal) algebraic varieties, see \cite[Cor.~7.1.3]{BBSurvey}, the following example shows that, if the group $G$ is not commutative, one cannot expect every quotient $Q$ as in Theorem~\ref{mainthm} to be an algebraic variety; hence it is necessary to work in the category of algebraic spaces.

\begin{ex}\label{ex:algebraicspaceI}
Let $G_{ss} \definiere SL_2(\mathbb{C})$, and let $$V = \{\text{forms of degree } 5 \text{ in two variables}\}$$ be the unique irreducible $SL_2(\mathbb{C})$-representation of dimension $6$. The Grassmannian $Y \definiere \mathrm{Gr}(3,6)$ of $3$-planes in $V$ naturally inherits a $G_{ss}$-action from $V$. The Pl\"ucker embedding $\mathrm{Gr}(3,6) \hookrightarrow \mathbb{P}(\bigwedge^3 V)$ is $G_{ss}$-equivariant and projectively normal. Consequently, the affine cone $X \definiere \mathrm{Cone}(Y) \subset \bigwedge^3V$ over $Y$ is a normal affine algebraic variety, endowed with an algebraic action of $G = \mathbb{C}^* \times G_{ss} = \mathbb{C}^* \times SL_2(\mathbb{C})$. By \cite[Ex.~on p.~15]{BBSCompleteSL2OrbitSpaces} there exists a Zariski-open $G_{ss}$-stable subset $U_Y \subset Y$ such that the good (geometric) quotient $U_Y \to Q = U_Y / G_{ss}$ exists, and such that $Q$ is a complete algebraic space, but not an algebraic variety. If $p\colon X \setminus \{0\} \to Y$ denotes the orbit projection for the $\mathbb{C}^*$-action, set $U \definiere p^{-1}(U_Y)$. Then, $U$ is a $G$-invariant Zariski-open subset of $X$, the good (geometric) quotient $\pi\colon U \to U / G = Q$ exists, and $Q$ is (by construction) a complete algebraic space, but not an algebraic variety.
\end{ex}

The following example shows that without a compactness assumption on the quotient the statements of Theorem~\ref{mainthm} do not continue to hold, not even in case $X = V$ a representation and $\Sigma$ of codimension at least two.
\begin{ex}
Let $X = \mathbb{C} \times \mathbb{C}^2$, endowed with the algebraic $\mathbb{C}^*$-action on the first factor. Let $\Gamma$ be a lattice of rank $4$ in $\mathbb{C}^2$. Set $\Sigma = \mathbb{C} \times \Gamma$. Then, the analytic Hilbert quotient $\pi\colon X\setminus \Sigma \to Q$ of $X \setminus \Sigma$ exists, and $Q$ is biholomorphic to $\mathbb{C}^2 \setminus \Gamma$. Since the fundamental group of $\mathbb{C}^2\setminus \Gamma$ is not finitely generated, $Q$ cannot be endowed with the structure of an algebraic space (of finite type).  
\end{ex}

By construction, the open subset $X\setminus \Sigma$ considered in the previous example is not $\mathbb{C}^*$-maximal, i.e., not maximal with respect to $\mathbb{C}^*$-saturated inclusion, cf.~\cite{BBSurvey}. In fact, the $\mathbb{C}^*$-maximal subset $X$ contains $X\setminus \Sigma$ as a $\mathbb{C}^*$-saturated subset, and $X$ has a good quotient whose restriction to $X\setminus \Sigma$ coincides with $\pi$. Hence, one might hope that Theorem~\ref{mainthm} continues to hold at least for $G$-maximal subsets $U \subset X$, although many of the methods used in our argument are not applicable in this more general situation.

\section{Spaces in class $\mathscr{Q}_G$}\label{sect:Q_G}
\subsection{Nemirovski's class $\mathscr{Q}_G$}
Recently, Nemirovski \cite{Nemirovski} introduced a class of complex spaces in which questions related to the classical Levi problem are tractable. We quickly recall his construction:

An (irreducible) complex space $X$ is said to \emph{belong to class $\mathscr{Q}_G$}, for $G$ a complex reductive Lie group, if there exists a (connected) Stein $G$-manifold $\mathscr{X}$, and a $G$-invariant analytic subset $\Sigma \subset \mathscr{X}$ with $\mathrm{codim}_{\mathscr{X}}(\Sigma)> 1$ such that the analytic Hilbert quotient for the $G$-action on $\mathscr{X}\setminus \Sigma$ exists and $X \cong (\mathscr{X}\setminus \Sigma)\hq G$.

\begin{rem}
 The codimension condition on $\Sigma$ is actually unnecessary: Assume that $\mathscr{X}$ is a connected Stein $G$-manifold, and $\Sigma \subset \mathscr{X}$ a $G$-invariant analytic subset such that $X \cong (\mathscr{X}\setminus \Sigma)\hq G$. Let $\Sigma = \Sigma_1 \cup \Sigma_2$ 
be the decomposition of $\Sigma$ into its codimension one part and the remaining components (all of which have codimension greater than or equal to two). Note that both $\Sigma_1$ and $\Sigma_2$ are $G$-invariant. In particular, $\mathscr{X}\setminus \Sigma_1$ is a $G$-invariant open subset of $\mathscr{X}$. Since $\mathscr{X}$ is smooth, the ideal sheaf $\mathscr{I}_{\Sigma_1}$ is locally principal, and we may apply \cite[V.\S 1.1, Thm.~5]{TheoryOfSteinSpaces} to conclude that $\mathscr{X}'\definiere \mathscr{X}\setminus \Sigma_1$ is Stein. Obviously, $\mathscr{X}\setminus \Sigma = \mathscr{X}' \setminus \Sigma_2$ and hence $(\mathscr{X}' \setminus \Sigma_2) \hq G = X$ is in class $\mathscr{Q}_G$.
\end{rem}
A similar reasoning applies in the algebraic case. We hence define an (irreducible) algebraic space $X$ to \emph{belong to class $\mathscr{Q}_G^{alg}$}, for $G$ a complex reductive Lie group, if there exists a (connected) affine algebraic $G$-manifold $\mathscr{X}$, and a $G$-invariant algebraic subset $\Sigma \subset \mathscr{X}$ (such that $\mathrm{codim}_{\mathscr{X}}(\Sigma)> 1$) such that the good quotient for the $G$-action on $\mathscr{X}\setminus \Sigma$ exists and $X \cong (\mathscr{X}\setminus \Sigma)\hq G$.

Examples are provided by analytic Hilbert quotients $\mathscr{X} \hq G$, toric varieties (for $G$ the product of an algebraic torus with a finite group, using the Cox realisation \cite{Cox}), flag manifolds (for $G$ a product of $GL(n_k)$'s), and moduli spaces of semistable quiver representations \cite{King}. 

In this note, we focus on compact spaces in class $\mathscr{Q}_G$. The following rough structure result (and more) was already observed in \cite[Sect.~2.4]{Nemirovski}.
\begin{prop}\label{prop:roughstructure}
 Let $X$ be a complex space in class $\mathscr{Q}_G$ with $\mathscr{O}_X(X) = \mathbb{C}$. Then, the Stein $G$-manifold $\mathscr{X}$ fulfills $\mathscr{O}_{\mathscr{X}}(\mathscr{X})^G = \mathbb{C}$. As a consequence, $\mathscr{X}$ carries the structure of an affine algebraic $G$-variety, uniquely determined up to $G$-equivariant biregular isomorphism.
\end{prop}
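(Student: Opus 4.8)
The plan is to prove the two assertions in turn, establishing the vanishing of the invariants first and then deducing algebraicity together with its uniqueness.

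For the vanishing, I would start from the defining property of the analytic Hilbert quotient. By hypothesis $X\in\mathscr{Q}_G$ is presented as $X\cong(\mathscr{X}\setminus\Sigma)\hq G$, so there is a quotient map $\pi\colon\mathscr{X}\setminus\Sigma\to X$ with $(\pi_*\mathscr{O}_{\mathscr{X}\setminus\Sigma})^G=\mathscr{O}_X$. Passing to global sections gives
\[
 \mathscr{O}_{\mathscr{X}\setminus\Sigma}(\mathscr{X}\setminus\Sigma)^G=\mathscr{O}_X(X)=\mathbb{C}.
\]
Since $\mathscr{X}$ is a complex manifold it is normal, and $\Sigma$ has codimension at least two; hence the second Riemann extension theorem (cf.\ \cite{TheoryOfSteinSpaces}) shows that restriction $\mathscr{O}_{\mathscr{X}}(\mathscr{X})\to\mathscr{O}_{\mathscr{X}\setminus\Sigma}(\mathscr{X}\setminus\Sigma)$ is an isomorphism. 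As $\Sigma$ is $G$-invariant, this isomorphism is $G$-equivariant, and passing to invariants yields $\mathscr{O}_{\mathscr{X}}(\mathscr{X})^G=\mathbb{C}$.

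For the algebraicity, note that $\mathscr{O}_{\mathscr{X}}(\mathscr{X})^G=\mathbb{C}$ means that the analytic Hilbert quotient $\mathscr{X}\hq G$ (which exists as $\mathscr{X}$ is Stein, cf.\ Examples~\ref{affinequotientisStein}.1) is a single reduced point. I would then invoke the holomorphic slice theorem of Luna type for Stein $G$-manifolds (\cite{HeinznerGIT}, \cite{Snow}): choosing $x_0$ on the unique closed orbit and setting $H\definiere G_{x_0}$, which is reductive, the normal space $W\definiere T_{x_0}\mathscr{X}/T_{x_0}(G\acts x_0)$ is a rational $H$-representation and there is a $G$-equivariant biholomorphism of a $\pi$-saturated neighbourhood of $G\acts x_0$ onto $G\times_H\Omega$, where $\Omega\subseteq W$ is an $H$-invariant open Stein neighbourhood of $0$. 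Because the quotient is a single point, every orbit closure meets $G\acts x_0$, so this saturated neighbourhood is all of $\mathscr{X}$ and $\mathscr{X}\cong G\times_H\Omega$. To identify $\Omega$ with $W$, I would observe that $\Omega\hq H$ is again a point, so by the identity theorem the restriction $\mathbb{C}[W]^H\hookrightarrow\mathscr{O}(\Omega)^H=\mathbb{C}$ forces $\mathbb{C}[W]^H=\mathbb{C}$; equivalently $0$ lies in the closure of every $H$-orbit. Given $w\in W$, the Hilbert--Mumford criterion provides a one-parameter subgroup $\lambda$ with $\lim_{t\to0}\lambda(t)\acts w=0$, which pushes $w$ into the open set $\Omega$, and $H$-invariance of $\Omega$ then yields $w\in\Omega$; hence $\Omega=W$. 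Thus $\mathscr{X}\cong G\times_H W$. Since $H$ is reductive, $G/H$ is affine by Matsushima's criterion \cite{Matsushima}, and the total space of the homogeneous vector bundle $G\times_H W$ over the affine base $G/H$ is a smooth affine algebraic $G$-variety.

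Finally, for uniqueness I would argue that the data producing the model are intrinsic to the holomorphic $G$-manifold $\mathscr{X}$: the closed orbit $G\acts x_0\cong G/H$ is the unique closed orbit, so $H$ is determined up to conjugacy, and the $H$-representation $W$ is the normal space to this orbit, hence equally intrinsic. Consequently any two affine algebraic $G$-structures inducing the given holomorphic $G$-action are $G$-equivariantly biregularly isomorphic to one and the same $G\times_H W$; alternatively one recovers the coordinate ring of any affine model as the subalgebra of $G$-finite holomorphic functions on $\mathscr{X}$. The whole statement is due to \cite[Sect.~2.4]{Nemirovski}. I expect the main obstacle to be the second part: one must correctly apply the analytic slice theorem on the Stein manifold and carry out the passage $\Omega=W$, since it is precisely there that the Stein hypothesis and the reductivity of the isotropy group are used.
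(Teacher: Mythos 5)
The paper offers no proof of Proposition~\ref{prop:roughstructure}: it is stated as an observation from \cite[Sect.~2.4]{Nemirovski}, and the ingredients behind it (the holomorphic slice theorem of \cite{Snow}, Luna's slice theorem \cite{LunaSlice}) surface only afterwards, in the proof of Theorem~\ref{thm:Q_G}. Your argument for the first assertion and for \emph{existence} of the algebraic structure reconstructs precisely this standard route and is correct: Riemann extension across the codimension-two set $\Sigma$ gives $\mathscr{O}_{\mathscr{X}}(\mathscr{X})^G=\mathbb{C}$ (in fact injectivity of restriction, i.e.\ the identity theorem, already suffices for this direction); the holomorphic slice theorem plus saturation over the one-point quotient gives $\mathscr{X}\cong G\times_H\Omega$; the null-cone argument gives $\Omega=W$ (here Hilbert--Mumford can even be avoided, since Zariski and Euclidean closures of orbits coincide, so a sequence $h_n\acts w\to 0$ together with openness and $H$-invariance of $\Omega$ yields $w\in\Omega$); and Matsushima's criterion makes $G\times_H W$ a smooth affine algebraic $G$-variety.

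The \emph{uniqueness} part, however, has a genuine gap as written. Your slice argument is purely holomorphic, so for an arbitrary affine algebraic $G$-structure $A$ on $\mathscr{X}$ it only shows $A^{an}\cong (G\times_H W)^{an}$ as holomorphic $G$-manifolds; the step ``consequently any two affine algebraic $G$-structures are $G$-equivariantly biregularly isomorphic to $G\times_H W$'' does not follow from the intrinsicness of $(H,W)$ alone, because biholomorphic varieties need not be biregular --- the divergence of analytic and algebraic classification is exactly the issue at stake (compare Serre's example of a non-affine quasi-projective Stein surface, \cite[Chap.~6, \S 3]{HartshorneAmple}, cited in Section~\ref{subsect:mapaffineproof}). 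To close the gap one must run an \emph{algebraic} argument on $A$ itself: note $\mathbb{C}[A]^G\subset\mathscr{O}(\mathscr{X})^G=\mathbb{C}$, that the Euclidean-closed orbit is Zariski-closed (orbits being constructible) with the same stabiliser $H$ and the same slice representation $W$, and then apply Luna's slice theorem \cite{LunaSlice} --- the Cartesian property of excellent morphisms over the one-point quotient forces $A\cong G\times_H W$ biregularly. This is exactly what the paper does in the proof of Theorem~\ref{thm:Q_G}. Your alternative suggestion, recovering $\mathbb{C}[A]$ as the algebra of $G$-finite holomorphic functions, is the other standard fix (it is Snow's approach), but it is itself a claim needing proof; it holds because, when $\mathbb{C}[W]^H=\mathbb{C}$, every module of covariants $\bigl(\mathbb{C}[W]\otimes M\bigr)^H$ with $M$ a finite-dimensional rational $H$-module is finite-dimensional over $\mathbb{C}[W]^H=\mathbb{C}$, which forces $H$-equivariant holomorphic maps $W\to M$ to be polynomial and hence $G$-finite holomorphic functions on $G\times_H W$ to be regular.
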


\subsection{Consequences of the main result}
We apply our main result to the setup of class $\mathscr{Q}_G$-spaces.
\begin{thm}\label{thm:Q_G}
Let $G$ be a complex reductive Lie group, and let $X$ be a compact complex space in class $\mathscr{Q}_G$. Then, there exists a reductive algebraic subgroup $H < G$ such that $X \in \mathscr{Q}_H^{alg}$. More precisely, there exists a rational $H$-representation $V$ together with an $H$-invariant algebraic subset $\Theta$ of codimension greater than one such that the $H$-action on $V \setminus \Theta$ admits a good quotient $V\setminus \Theta \to Q$ with $Q^{an} \cong X$. 
\end{thm}
\begin{proof}Let $X = (\mathscr{X}\setminus \Sigma) \hq G$ be in class $\mathscr{Q}_G$. By Proposition~\ref{prop:roughstructure} above, $\mathscr{X}$ is an affine algebraic $G$-variety without non-constant invariant regular functions. It follows from the main result, Theorem~\ref{mainthm}, that $X$ is Moishezon, that $\Sigma$ is algebraic, and that $\mathscr{X}\setminus \Sigma$ admits a good quotient $\mathcal{P}$ by $G$ whose associated complex space is biholomorphic to $X$. Moreover, as already observed by Nemirovski \cite[Sect.~2.4]{Nemirovski}, it follows from Luna's slice theorem \cite{LunaSlice} that there exists a reductive algebraic subgroup $H$ of $G$ and a rational $H$-representation $V$ such that $\mathscr{X}$ is $G$-equivariantly biregular to $G \times_H V$. Intersecting $\Sigma$ with the fibre of $G \times_H V \to G/H$ over $eH$, we obtain an $H$-invariant algebraic subset $\Theta$ of $V$ with $\mathrm{codim}_V \Theta > 1$ such that the good quotient $Q = (V \setminus \Theta)\hq H$ exists and is biregular to $\mathcal{P}$. This concludes the proof.
\end{proof}
\begin{cor}\label{cor:toric}
 Let $T$ be an algebraic torus, and let $X$ be a compact complex space in class $\mathscr{Q}_T$. Then, $X$ is a toric variety. 
\end{cor}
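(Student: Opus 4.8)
The plan is to feed the torus case into the already-established Theorem~\ref{thm:Q_G} and then recognise the resulting algebraic quotient as the Cox-type presentation of a toric variety. Applying Theorem~\ref{thm:Q_G} with $G = T$ produces a reductive algebraic subgroup $H < T$, a rational $H$-representation $V$ of dimension $n$, and an $H$-invariant algebraic subset $\Theta \subset V$ of codimension greater than one such that the good quotient $\pi\colon V\setminus\Theta \to Q$ exists with $Q^{an}\cong X$. The first observation is that, being a closed algebraic subgroup of the torus $T$, the group $H$ is diagonalisable, hence abelian; consequently $V$ splits into one-dimensional weight spaces and, after a choice of coordinates, $H$ acts diagonally via a homomorphism $\chi\colon H \to \mathbb{T}\definiere(\C^*)^n$ into the standard maximal torus of $\mathrm{GL}(V)$.

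Next I would record that $Q$ is a normal complete algebraic variety. Since $V$ is smooth and $\Theta$ is closed, $U\definiere V\setminus\Theta$ is smooth and in particular normal, so its good quotient $Q$ is normal, and compactness of $X$ makes $Q$ complete. To see that $Q$ is an honest variety rather than merely an algebraic space, I would take the quotient in two steps: first by the identity component $H^0$, which is a subtorus, where good quotients of normal varieties by tori are again varieties by \cite[Cor.~7.1.3]{BBSurvey}, and then by the finite group $H/H^0$, where normality is preserved because $Q$ is covered by $(H/H^0)$-stable affine charts.

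The heart of the argument is to produce a regular action of a torus on $Q$ with a dense orbit. The large torus $\mathbb{T}$ acts on $V$ with dense open orbit $U_0 = (\C^*)^n$ and contains $\chi(H)$; the invariant function field is $\C(Q) = \C(V)^H = \C(U_0)^{\chi(H)} = \C(T_Q)$, where $T_Q\definiere\mathbb{T}/\overline{\chi(H)}$ is again an algebraic torus. Because $\mathbb{T}$ centralises $H$, it induces a rational action of $T_Q$ on $Q$ whose generic orbit is the image of $U_0$. The delicate point --- and the one I expect to be the main obstacle --- is to promote this to a genuine algebraic action of $T_Q$ on all of $Q$, together with the assertion that the image of $U_0$ is a \emph{single} dense open $T_Q$-orbit. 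The obstruction is that $\Theta$ is only assumed $H$-invariant, so $\mathbb{T}$ need not preserve $U = V\setminus\Theta$. I would resolve this by showing that $\Theta$ may be taken $\mathbb{T}$-invariant: completeness of $Q$ forces $U$ to be a saturated semistable locus for the $H$-action, and for a diagonalisable group such loci are cut out by monomial (hence $\mathbb{T}$-eigen) conditions, so that replacing $\Theta$ by its $\mathbb{T}$-saturation leaves $Q$ unchanged; alternatively one may invoke the structure theory of good quotients of $\mathbb{T}$-varieties from \cite{BBSurvey} directly. Once $\mathbb{T}$ preserves $U$, it descends to the desired regular $T_Q$-action, and $\pi(U_0)$ becomes the dense $T_Q$-orbit.

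Finally I would conclude. Let $T_{\mathrm{eff}}$ be the quotient of $T_Q$ by the (necessarily trivially acting) generic isotropy group of a point of $\pi(U_0)$; since $T_Q$ is abelian and $\pi(U_0)$ is dense, this isotropy group is the same for every point of the orbit, acts trivially there, and hence trivially on all of $Q$ by continuity. Then $T_{\mathrm{eff}}$ is a torus acting effectively on the normal complete variety $Q$ with $\pi(U_0)\cong T_{\mathrm{eff}}$ sitting as a dense open orbit, and the translation action on this orbit extends to $Q$. By definition $Q$ is a toric variety, and therefore $X\cong Q^{an}$ is a compact toric variety, as claimed.
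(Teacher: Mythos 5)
Your overall route is the same as the paper's: apply Theorem~\ref{thm:Q_G}, exploit that a closed algebraic subgroup $H$ of a torus is diagonalisable, and identify $Q$ as toric via the Bia{\l}ynicki-Birula--{\'S}wi{\polhk{e}}cicka theory of open subsets of representations admitting good quotients. But your execution has a genuine gap in the step showing that $Q$ is a variety rather than merely an algebraic space. You quotient first by the subtorus $H^0$ and then by the finite group $H/H^0$, justifying the second step by the claim that the intermediate quotient is covered by $(H/H^0)$-stable affine charts. That claim is unsubstantiated and is exactly where the difficulty sits: the intermediate quotient $U\hq H^0$ is a \emph{complete} normal variety (it maps finitely onto the complete space $Q$), and a complete normal variety with a finite group action need not be covered by invariant affine opens --- Hironaka-type examples produce complete smooth threefolds with a free involution whose quotient is not a scheme, and the paper's own Example~\ref{ex:algebraicspaceII} shows that in the $\mathscr{Q}_G$ setting quotients really can fail to be varieties. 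So ``torus first, finite second'' cannot establish variety-ness. The paper runs the two steps in the opposite order: writing $H = \Gamma \times T'$ with $\Gamma$ finite and $T'$ a torus \cite[Cor.~on p.~114]{OnishchikVinberg}, the quotient $U/\Gamma$ is Zariski-open in the affine variety $V/\Gamma$, hence a normal \emph{quasi-affine} variety, and only then does one quotient by the torus $T'$, where \cite[Cor.~7.1.3]{BBSurvey} guarantees that the result is a normal variety.

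The point you yourself flag as the main obstacle --- that $\Theta$ may be taken $\mathbb{T}$-invariant, i.e., that $U$ is a semistable locus cut out by monomial conditions --- is also not actually proved but deferred to ``the structure theory of good quotients of $\mathbb{T}$-varieties''. The results that deliver it, \cite[Thm.~1.6]{BBRecipeForVectorSpaces} and \cite[Cor.~6.1]{BBOpenSubsetsOfProjectiveSpacesWithQuotient} (the ones the paper invokes), concern \emph{torus} actions and require $U$ to be maximal with respect to saturated inclusion for that torus; your $H$ is a possibly disconnected diagonalisable group, so they do not apply directly. Here, too, the repair passes through the decomposition $H = \Gamma \times T'$: finiteness of $q_\Gamma\colon U\hq T' \to Q$ together with completeness of $Q$ makes $U\hq T'$ complete, hence $U$ is $T'$-maximal; the cited results then exhibit $U\hq T'$ as a torus embedding of $\hat T = \mathbb{T}/T'$; and commutativity of $\mathbb{T}$ lets the $\hat T$-action descend past $\Gamma$ to give the open orbit on $Q$. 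A related smaller gap: your identity $\mathbb{C}(Q) = \mathbb{C}(V)^H$ presupposes that generic $H$-orbits are closed in $U$, which is not automatic for a good quotient (compare the paper's care with this point in Section~\ref{subsect:1.1.1proof}) and in this argument only becomes available after the semistability description of $U$ is in place.
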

\begin{proof}
 By Theorem~\ref{thm:Q_G}, there exists a closed algebraic subgroup $H < T$ and a rational $H$-representation $V$ together with a $H$-invariant Zariski-open subset $U$ that admits a good quotient $p\colon U \to Q$ with $Q^{an} \cong X$. Being an algebraic subgroup of an algebraic torus, the group $H$ splits as a direct product $H = \Gamma \times T'$, where $\Gamma$ is a finite group, and $T'$ is an algebraic torus, see for example \cite[Cor.~on p.~114]{OnishchikVinberg}. As $H$ is a direct product, there are two ways of taking the quotient of $U$ by $H$ in steps. Doing this, we obtain the following commutative diagram of good quotients:
$$\begin{xymatrix}{
 U  \ar[rd]^{p}\ar[r]^{p_{T'}}\ar[d]_{p_\Gamma}&  U \hq T'\ar[d]^{q_{\Gamma}}   \\
  U/\Gamma \ar[r]_{q_{T'}} & Q. 
}
  \end{xymatrix}
 $$
The quotient $U/\Gamma$ is a Zariski-open subset of the affine variety $V/\Gamma$, hence a normal quasi-affine variety. As a good quotient of this normal algebraic variety by the algebraic torus $T'$, the algebraic space $Q$ is a normal algebraic variety, for example by \cite[Cor.~7.1.3]{BBSurvey}. On the other hand, as $q_\Gamma$ is finite, the quotient $U\hq T'$ is complete, which implies that $U$ is a $T'$-maximal subset of $V$. It hence follows from \cite[Thm.~1.6]{BBRecipeForVectorSpaces} and \cite[Cor.~6.1]{BBOpenSubsetsOfProjectiveSpacesWithQuotient} that  $U\hq T'$ is a torus embedding of $\hat T := \mathbb{T}/ T'$, where $\mathbb{T}$ is a ''big`` torus in $PGL_\mathbb{C}(V \oplus \mathbb{C})$ containing the image of $H$ (note that the action of $H$ on $V$ is diagonalisable). As $\mathbb{T}$ is commutative, the action of $\Gamma$ commutes with the action of $\hat T$ on $U\hq T'$. Consequently, the $\hat T$-action on $U\hq T'$ descends to an algebraic $\hat T$-action on $Q$. As $U\hq T'$ is toric for $\hat T$, the action of $\hat T$ on $Q$ has an open orbit. Hence, we conclude that $Q$ is a torus embedding for a quotient of $\hat T$. This concludes the proof.
\end{proof}
In view of Example~\ref{ex:algebraicspaceI} and of the result just proven, one might ask whether every compact space in class $\mathscr{Q}_G$, where $G$ is some complex reductive Lie group, is a complete algebraic variety, and not just a complete algebraic space, i.e., whether an additional smoothness assumption implies that the quotient is a variety. The next example shows that for general non-commutative reductive groups $G$ the answer to this question is negative. 
\begin{ex}\label{ex:algebraicspaceII}
We continue to use the notation of Example~\ref{ex:algebraicspaceI}. This time, we will exploit the fact that the Grassmannian $Y$ is in class $\mathscr{Q}_{GL_3(\mathbb{C})}$. More precisely, $Y$ is the quotient of the Zariski-open subset $W_{\mathrm{max}}$ of maps of maximal rank in $W := \mathrm{Hom}_{\mathbb{C}}( \mathbb{C}^3, V)$ by the natural $GL_3(\mathbb{C})$-action (by precomposition); let $q: W_{\mathrm{max}} \to Y$ be the quotient map. Moreover, the $G_{ss}$-action on $Y$ described in Example~\ref{ex:algebraicspaceI} lifts to the natural $G_{ss}$-action on $W$ (by postcomposition). As before, let $U_Y$ be the Zariski-open $G_{\mathrm{ss}}$-stable subset of $Y$ with good (geometric) quotient $U_Y \to Q = U_Y / G_{ss}$, where $Q$ is a complete algebraic space, but not an algebraic variety. Then, $U':= q^{-1}(U) \subset W$ is a $(GL_{3}(\mathbb{C}) \times G_{ss})$-invariant Zariski-open subset of $W$, and the good (geometric) quotient $U' \to U'/ (GL_{3}(\mathbb{C}) \times G_{ss}) \cong Q$ exists. Hence, $Q$ is in class $\mathscr{Q}_{GL_{3}(\mathbb{C}) \times G_{ss}}$, and $Q$ is a complete algebraic space  but not an algebraic variety.
\end{ex}

\begin{rem}
 Note that in the previous example the group $GL_{3}(\mathbb{C}) \times G_{ss}$ is neither commutative nor semisimple, in accordance with Corollary~\ref{cor:toric} and with the following observation: let $G$ be a semisimple complex Lie group and let $V$ be a $G$-module, such that there exists a non-empty Zariski-open subset $U \subset V$ with complete quotient $U \hq G$. Note that $V$ is a factorial affine algebraic variety and that factoriality is preserved by passing to an open subset $V' \supset U$ with codimension one complement, see \cite[Chap.~II, Props. 6.2 and 6.5]{Hartshorne}. As observed before, we have $\mathscr{O}_{V'}(V')^G = \mathbb{C}$. Now, classical results concerning invariant rational functions on factorial affine $G$-varieties, see e.g.~\cite[Thm.~3.3]{PopovVinberg}, imply that $G$ has an open orbit in $V'$, and hence that $Q$ is a point; in particular, $Q$ is an algebraic variety.
\end{rem}

\subsection{Varieties with finitely generated Cox ring}
While the motivation to study spaces in class $\mathscr{Q}_G$ comes from the classical Levi problem, varieties with finitely generated Cox ring and Mori dream spaces form an important class of higher-dimensional varieties for which many of the features of the Minimal Model Program are well-understood. We quickly recall their definition.
\begin{defi}\label{defi:MDS}
 Let $X$ be an irreducible complete normal algebraic variety with only constant invertible global regular functions and finitely generated divisor class group $\mathrm{Cl}(X)$. The \emph{Cox-ring} (or \emph{total coordinate ring}) of $X$ morally is defined to be
$$\mathrm{Cox}(X) = \bigoplus_{[D] \in \mathrm{Cl}(X)} H^0\bigl(X, \, \mathscr{O}_X(D) \bigr).$$
Due to the potential existence of torsion in $\mathrm{Cl}(X)$ and due to the question how to identify section spaces of linearly equivalent divisors, the precise definition requires some care, see for example~\cite[Sect.~2]{Baeker}. A $\mathbb{Q}$-factorial projective variety with finitely generated Cox ring is called a \emph{Mori Dream Space}.
\end{defi}
\begin{rem}\label{rem:MDSalaHuKeel}
Note that in \cite{HuKeel} a Mori Dream Space is defined to be a $\Q$-factorial projective variety $X$ such that 
\begin{itemize}
 \item $\mathrm{Pic}(X)_\Q = N^1(X)_\Q$, and
 \item for some choice of $\Q$-basis $\{L_1, \dots, L_k\}$ of $N^1(X)_\Q$ whose affine hull contains the cone of effective divisors, the associated ring
\[\bigoplus_{(\nu_1, \dots, \nu_k) \in \N^k} H^0 \bigl(X, L_1^{\nu_1} \otimes \dots \otimes L_k^{\nu_k}\bigr)\]
 is finitely generated.
\end{itemize}
We note that any Mori Dream Space in the sense of Hu-Keel is also a Mori Dream Space as defined in Definition~\ref{defi:MDS}. For more information the reader is referred to \cite[Thm.~2.37]{HausenThreeLectures}.
\end{rem}

As a consequence of our main result we obtain:
\begin{cor}\label{cor:Coxfg}
 Let $X$ be a complete algebraic variety in class $\mathscr{Q}_G$. Then, both the divisor class group $\mathrm{Cl}(X)$ and the Cox ring $\mathrm{Cox}(X)$ are finitely generated. If $X$ is additionally assumed to be $\mathbb{Q}$-factorial and projective, then $X$ is a Mori dream space.
\end{cor}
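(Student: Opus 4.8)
\textbf{The plan} is to realise $X$ as a good quotient in class $\mathscr{Q}^{alg}_H$ and then transport the finite generation of the Cox ring across this quotient presentation. By Theorem~\ref{thm:Q_G}, since $X$ is a complete algebraic variety in class $\mathscr{Q}_G$, there is a reductive algebraic group $H$, a rational $H$-representation $V$, and an $H$-invariant algebraic subset $\Theta \subset V$ of codimension $\geq 2$ such that the good quotient $p\colon V \setminus \Theta \to Q$ exists and $Q^{an} \cong X$; by GAGA for complete algebraic spaces, $Q$ is biregular to $X$ as an algebraic space, and since $X$ is assumed to be a \emph{variety}, $Q$ is a variety. Thus $X$ arises as a good quotient of an $H$-invariant open subset (with complement of codimension $\geq 2$) of the \emph{factorial} affine variety $V$. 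This is precisely the quotient-presentation setup in which Cox rings are computed.

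The first concrete step is to establish finite generation of $\mathrm{Cl}(X)$. Because $V$ is a representation space, it is factorial, so $\mathrm{Cl}(V) = 0$; removing the codimension-$\geq 2$ set $\Theta$ does not change the class group, so $\mathrm{Cl}(V\setminus\Theta)=0$ as well. The group $H$ acts on $V\setminus\Theta$ with good quotient $X$, and the standard descent theory for divisor class groups under good quotients (the exact sequence relating $\mathrm{Cl}(X)$ to the group $\mathrm{Pic}^H$ or, more directly, to the character group $\mathfrak{X}(H)$ of $H$ together with the classes of the codimension-one components of the non-free locus) expresses $\mathrm{Cl}(X)$ as a quotient of $\mathfrak{X}(H) \oplus \mathbb{Z}^r$, where $r$ counts the finitely many prime divisors in $V \setminus \Theta$ on which $H$ does not act freely. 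Since $H$ is a reductive algebraic group, $\mathfrak{X}(H)$ is a finitely generated abelian group, and $r$ is finite; hence $\mathrm{Cl}(X)$ is finitely generated.

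The second and main step is finite generation of $\mathrm{Cox}(X)$. The key structural fact is that for a good quotient of a (quasi-)affine factorial variety by a reductive group, the Cox ring of the quotient is built from the invariant ring of $V$ graded by the characters of $H$. Concretely, one identifies $\mathrm{Cox}(X)$ (up to a finitely generated extension accounting for torsion in $\mathrm{Cl}(X)$ and the relation between $\mathrm{Cl}(X)$ and $\mathfrak{X}(H)$) with a ring of the form $\bigoplus_{\chi \in \mathfrak{X}(H)} \mathscr{O}(V)^{H,\chi}$, the algebra of $H$-semiinvariants of the coordinate ring of $V$. This total semiinvariant algebra is nothing but the invariant ring $\mathscr{O}(V \times H')^{H}$ for a suitable auxiliary construction, and its finite generation follows from \textbf{Hilbert's finiteness theorem} for reductive group actions, since $V$ is a finite-dimensional rational representation. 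The passage from this semiinvariant algebra to $\mathrm{Cox}(X)$ is via a finite extension (governed by the finite torsion of $\mathrm{Cl}(X)$), and finite generation is preserved under such extensions. \textbf{The main obstacle} I anticipate lies precisely here: matching the intrinsically-defined $\mathrm{Cox}(X)$ against the character-graded semiinvariant algebra requires care about the codimension-$\geq 2$ set $\Theta$ (harmless, by normality of $V$ and Hartogs-type extension of sections), about torsion in $\mathrm{Cl}(X)$ (which forces the $2$-cocycle/Cox-sheaf bookkeeping of \cite[Sect.~2]{Baeker}), and about the relation between $\mathfrak{X}(H)$ and $\mathrm{Cl}(X)$; the cleanest route is to cite the established theory of Cox rings of good quotients rather than to reprove descent by hand.

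Finally, for the Mori dream space conclusion: once $X$ is $\mathbb{Q}$-factorial and projective with finitely generated Cox ring, it satisfies the definition of a Mori dream space in the sense of Definition~\ref{defi:MDS} directly, and by Remark~\ref{rem:MDSalaHuKeel} it is also one in the sense of \cite{HuKeel}. Thus no further work is needed beyond invoking the two preceding steps.
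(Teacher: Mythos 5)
Your proposal is correct and takes essentially the same route as the paper: realise $X$ via Theorem~\ref{thm:Q_G} as a good quotient of $V\setminus\Theta$ (a factorial affine variety minus a codimension-$\geq 2$ invariant subset) and then transport finite generation of $\mathrm{Cl}$ and $\mathrm{Cox}$ across the quotient -- the paper simply cites \cite{Baeker} at exactly the point where you sketch the class-group descent and the semiinvariant-algebra/Hilbert-finiteness argument, and draws the Mori dream space conclusion from Definition~\ref{defi:MDS} just as you do. The only hypothesis worth stating explicitly when invoking \cite{Baeker} is $\mathscr{O}_X(X)^*=\mathbb{C}^*$, which is automatic here since $X$ is complete.
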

\begin{proof}
 As we have seen above, there exists a reductive algebraic subgroup $H$ of $G$ as well as a rational $H$-representation $V$ together with an $H$-invariant algebraic subset $\Sigma$ of codimension greater than one such that the $H$-action on $V \setminus \Sigma$ admits a good quotient $V\setminus \Sigma \to Q$ with $Q^{an} \cong X$. Since the Cox ring $\mathrm{Cox}(V)\cong \mathbb{C}[V]$ is finitely generated and since $\mathscr{O}_X(X)^* = \mathbb{C}^*$, an application of \cite{Baeker} yields the desired result.
\end{proof}

\begin{rem}
Non-$\Q$-factorial and non-projective toric varieties exist in abundance, cf.~\cite[Prop.~4.2.7 and Ex.~4.2.13]{CoxLittleSchenck}. As complete toric varieties are in $\mathscr{Q}_T$ for $T$ the product of some algebraic torus and a finite group, see \cite[Thm.~5.1.11]{CoxLittleSchenck}, not every variety in $\mathscr{Q}_G$ is $\Q$-factorial or projective.
\end{rem}
Based on the observations made above, it is a natural question to ask whether there exists an intrinsic characterisation of those Mori dream spaces or varieties with finitely generated Cox ring that are in class $\mathscr{Q}_G$ for some complex reductive group $G$. The following result yields two necessary criteria, the first one local and the second one of global nature.

\begin{prop}\label{prop:additionalproperties}
 Let $X$ be a complex space in class $\mathscr{Q}_G$. Then, 
\vspace{-0.2cm}
\begin{enumerate}
 \item $X$ has at worst reductive quotient singularities. In particular, $X$ has rational singularities.
 \item $X$ is unirational.
\end{enumerate}
\end{prop}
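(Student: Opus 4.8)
The plan is to establish the two assertions separately: the local statement (1) via a holomorphic slice theorem together with Boutot's theorem, and the global statement (2) from the algebraic structure of $\mathscr{X}$.

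For (1), write $X \cong (\mathscr{X}\setminus\Sigma)\hq G$ as in the definition of class $\mathscr{Q}_G$ and fix $q \in X$. Since $\mathscr{X}$ is a Stein $G$-manifold and $\Sigma$ is closed, $U\definiere\mathscr{X}\setminus\Sigma$ is an open subset of a complex manifold, hence itself a complex manifold. The fibre $\pi^{-1}(q)$ contains a unique closed orbit $G\acts x$ with $x\in U$, whose stabiliser $H\definiere G_x$ is reductive by \cite[Prop.~2.5]{Snow} and \cite{Matsushima}. I would then apply the holomorphic slice theorem for analytic Hilbert quotients (\cite{HeinznerGIT, Snow, SemistableQuotients}): there is an $H$-stable slice $S\ni x$, realised $H$-equivariantly as an $H$-invariant neighbourhood of $0$ in the slice representation $N\definiere T_xU/T_x(G\acts x)$, such that the natural map $G\times_H S \to U$ is biholomorphic onto a saturated neighbourhood of $G\acts x$ and $S\hq H$ is biholomorphic to a neighbourhood of $q$ in $X$. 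It follows that a neighbourhood of $q$ in $X$ is biholomorphic to a neighbourhood of the image of $0$ in $N\hq H$, so that $X$ has at worst reductive quotient singularities. As $N$ is smooth and $H$ is reductive, the affine quotient $N\hq H$ --- and therefore its analytic germ --- has rational singularities by Boutot's theorem \cite{Boutot}, which yields the \emph{in particular} clause.

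For (2), I would reduce to the algebraic model. In the cases where unirationality is meaningful, namely when $X$ is a variety --- which holds in particular when $X$ is compact, so that $\mathscr{O}_X(X)=\mathbb{C}$ --- Proposition~\ref{prop:roughstructure} endows $\mathscr{X}$ with the structure of an affine algebraic $G$-variety satisfying $\mathscr{O}_{\mathscr{X}}(\mathscr{X})^G=\mathbb{C}$. Luna's slice theorem \cite{LunaSlice} then provides a reductive algebraic subgroup $H<G$ and a rational $H$-representation $V$ together with a $G$-equivariant biregular isomorphism $\mathscr{X}\cong G\times_H V$, and, exactly as in the proof of Theorem~\ref{thm:Q_G}, intersecting $\Sigma$ with the fibre over $eH$ gives an $H$-invariant algebraic subset $\Theta\subset V$ with $X\cong (V\setminus\Theta)\hq H$. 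The good quotient map $V\setminus\Theta\to X$ is dominant, and $V\setminus\Theta$ is a Zariski-open subset of the vector space $V$, hence rational; therefore $\mathbb{C}(X)$ embeds into the purely transcendental field $\mathbb{C}(V\setminus\Theta)=\mathbb{C}(V)$, and $X$ is unirational.

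I expect the main obstacle to lie in part (1): one must verify that the holomorphic slice theorem applies in precisely this situation --- for the (in general non-Stein) $G$-space $U=\mathscr{X}\setminus\Sigma$, for a possibly disconnected reductive isotropy group $H$, and with the slice taken in linear form. The linearisation of the holomorphic $H$-action near the fixed point $x$ (of Cartan--Bochner type) is the technical core of this reduction. A second, more bookkeeping point is to identify the local analytic germ $N\hq H$ with the analytification of the algebraic affine GIT quotient, so that the algebraic statement of Boutot's theorem transfers to the analytic germ. Part (2) is essentially formal once the algebraic structure of $\mathscr{X}$ is available, the only genuine caveat being that unirationality presupposes that $X$ be a variety, which is why the hypothesis $\mathscr{O}_X(X)=\mathbb{C}$ enters.
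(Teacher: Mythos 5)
Your proof is correct and follows essentially the same route as the paper: part (1) via the holomorphic slice theorem, part (2) by observing that $X$ receives a dominant rational map from (an open subset of) the representation $V$, so that $\mathbb{C}(X)$ embeds into the purely transcendental field $\mathbb{C}(V)$. The one genuine substitution is in the rational-singularities clause: the paper invokes its own analytic generalisation of Boutot's theorem \cite{GrebAnalyticSingularities}, which applies directly to analytic Hilbert quotients, whereas you apply algebraic Boutot to the affine GIT quotient $N\hq H$ of the slice representation (automatically a rational representation, since $H$ is complex reductive) and transfer the conclusion to the analytic germ; this shortcut is legitimate precisely because the local model is algebraic, though it proves slightly less than the cited result, which also covers quotients of singular complex spaces. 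The ``main obstacle'' you flag --- applying the slice theorem on the non-Stein space $U = \mathscr{X}\setminus\Sigma$ --- is in fact resolved by the definition of an analytic Hilbert quotient: the preimage $\pi^{-1}(A)$ of any Stein open neighbourhood $A$ of $q$ is Stein (property (2) in Section~\ref{sect:goodquotientsintro}), and Snow's slice theorem \cite[p.~80]{Snow} is applied to this Stein $G$-manifold, where in the smooth case the slice may be taken linear, which is exactly what the definition of reductive quotient singularities requires. Finally, your caveat in (2) that unirationality presupposes algebraicity agrees with the paper's implicit assumption (``owing to its algebraicity''), but it should read ``algebraic space'' rather than ``variety'': by Theorem~\ref{thm:Q_G} a compact $X \in \mathscr{Q}_G$ is a complete algebraic space that need not be a variety (Example~\ref{ex:algebraicspaceII}), and your function-field argument works verbatim in that generality.
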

Here, we say that a complex space $X$ has \emph{reductive quotient singularities}, if every point $p \in X$ has open neighbourhood $U$ biholomorphic to an open neighbourhood of $\pi(0) \in W\hq H$, where $H$ is a complex reductive Lie group, $W$ is a rational $H$-representation, and $\pi: W \to W\hq H$ is the invariant-theoretic quotient map. 

\begin{proof}[Proof of Proposition~\ref{prop:additionalproperties}]
The first part is an immediate consequence of the holomorphic slice theorem \cite[p.~80]{Snow} and the main result of \cite{GrebAnalyticSingularities}. For the second part, it suffices to note that owing to its algebraicity the quotient morphism $\pi\colon V \supset U \to X$ extends to a dominant rational map $\mathbb{P}(V \oplus \mathbb{C}) \dasharrow X$.
\end{proof}

We note that in general, varieties with finitely generated Cox ring and even Mori dream spaces can have worse than rational singularities. The following example was constructed in joint discussions with Patrick Graf and Alex K\"uronya. The author would like to thank these colleagues for allowing him to include it here. 
\begin{ex}\label{ex:MDS}The example is mainly an exercise in computing on projective cones. We will use additive notation both for divisors and their associated divisorial sheaves. 

Let $Y \subset \mathbb{P}^N$ be a smooth positive-dimensional variety, and let $X$ be the projective cone over $Y$. We assume $Y$ to be projectively normal, so that $X$ is normal. If $\widetilde X$ is the blowup of the vertex $P \in X$, we obtain a commutative diagram
\[ \xymatrix{
\widetilde X \ar[dr]_\pi \ar[rr]^f & & X \ar@{-->}[dl]^p \\
& Y &
} \]
and $\widetilde X = \mathbb{P}_{\mathrm{sub}}(\mathscr{O}_Y(1) \oplus \mathscr{O}_Y)$ is a $\mathbb{P}^1$-bundle over $Y$. Write $L = \mathscr{O}_Y(1)$, and let $E \cong Y$ be the exceptional divisor of the resolution $f$. Then, the normal bundle $E|_E$ of $E$ in $\widetilde X$ is $-L$. The projection $\pi$ has another section, which does not meet $E$. Its image under $f$ will be denoted by $S_\infty$.

Let $D$ be a divisor on $Y$. Then $p^* D$ is defined by pulling back $D$ on the locus where $p$ is a morphism, and then taking the closure in $X$. If $D$ is any divisor on $\widetilde X$, then the pushdown $f_*D$ is defined to be the Weil divisor $f(D)$ on $X$. Note that $f_*$ preserves linear equivalence of Weil divisors.

Our example will be a projective cone over a suitable hypersurface in some $\mathbb{P}^N$. In order to establish the Mori dream space property, we will proof a number of preliminary claims.

\noindent \textbf{Claim 1:} \emph{The canonical divisor of $X$ is given by $K_X = p^*(K_Y - L)$.}

\noindent\emph{Proof.} First, we calculate the canonical divisor of $\widetilde X$. The Picard group of $\widetilde X$ is generated by $\pi^*(\mathrm{Pic}(Y))$ and $E$. So we may write
\[ K_{\widetilde X} = \pi^*(M) + \lambda E \quad\quad \text{for some } M \in \mathrm{Pic}(Y), \lambda \in \mathbb{Z}.\]
Let $F \cong \P^1$ be a fibre of $\pi$. Then $K_F = K_{\widetilde X}\big|_F$, because the normal bundle of $F$ in $\widetilde X$ is trivial. Since $E \cdot F = 1$, we get $\lambda = -2$. On the other hand, by the adjunction formula for $E \subset \widetilde X$, we obtain \[ M - 2 E|_E = K_{\widetilde X}\big|_E = K_E - E|_E, \]
so $M = K_E + E|_E = K_Y - L$. In summary, we arrive at $K_{\widetilde X} = \pi^*(K_Y - L) - 2 E$. Claim 1 follows from this by passing through $f$. \hfill\qed

\noindent \textbf{Claim 2:}  \emph{Let $D$ be a divisor on $Y$. Then $p^* D$ is $\Q$-Cartier if and only if there is a $\lambda \in \Q$ such that $D \sim_\Q \lambda L$. (``$D$ is $\Q$-linearly proportional to $L$.'')}

\noindent\emph{Proof.} 
 See \cite[Ex.~3.5]{HaconKovacs}.\hfill \qed

\noindent \textbf{Claim 3:}  \emph{The pair $(X, \emptyset)$ is log canonical if and only if $K_Y \sim_\Q \lambda L$ for some $\lambda \in \Q^{\le 0}$.
 }

\noindent\emph{Proof.} By Claims 1 and 2, the variety $X$ is $\Q$-Gorenstein if and only if $K_Y \sim_\Q \lambda L$ for some $\lambda \in \Q$. In this case, the ramification formula for $f$ reads
\[ K_{\tilde X} = f^* K_X + aE \quad \text{for some }a \in \mathbb{Q}. \]
Restricting to $E$ and using the adjunction formula for $E \subset \tilde X$, we get $K_E - E|_E = aE|_E$, so $K_Y = -(a + 1)L$.
Consequently, $a \ge -1$ is equivalent to $\lambda \le 0$.\hfill \qed

\noindent \textbf{Claim 4:}  \emph{Any (Weil) divisor $D$ on $X$ is linearly equivalent to $p^* D'$ for some $D' \in \mathrm{Pic}(Y)$.
 }

\noindent\emph{Proof.} 
The strict transform $f^{-1}_* D$ can be written as $f^{-1}_* D \sim \pi^* D' + \lambda E$.
Pushing this down by $f$ gives $D \sim f_* \pi^* D' = p^* D'$, 
which was to be shown. \hfill \qed

\noindent \textbf{Claim 5:}  \emph{The following conditions are equivalent.}
\vspace{-0.15cm}
\begin{enumerate}
\item \emph{$\mathrm{Pic}(Y)_\mathbb{Q} = N^1(Y)_\mathbb{Q}$ and $\rho(Y) = 1$, and}
\item \emph{$X$ is $\mathbb{Q}$-factorial.}
\end{enumerate} 
\noindent\emph{Proof.}
``$\Rightarrow$": Let $D$ be a Weil divisor on $X$. By Claim 4, $D \sim p^* D'$ for some $D' \in \mathrm{Pic} Y$. Since $\rho(Y) = 1$, $D'$ is numerically proportional to $L$. Then since $\mathrm{Pic}(Y)_\Q = N^1(Y)_\Q$, $D'$ is $\Q$-linearly proportional to $L$. Hence, $D$ is $\Q$-Cartier by Claim 2.

``$\Leftarrow$": If $\mathrm{Pic}(Y)_\Q \ne N^1(Y)_\Q$, then there is a numerically trivial divisor $D$ on $Y$ which is non-torsion. It follows that there does not exist a $\lambda \in \Q$ such that $D \sim_\Q \lambda L$. An application of Claim 2 hence yields that $p^* D$ is not $\Q$-Cartier.

If $\rho(Y) \ge 2$, then there is a divisor $D$ on $Y$ which is not numerically proportional to $L$. By Claim 2 again, $p^* D$ is not $\Q$-Cartier. \hfill \qed

\noindent \textbf{Claim 6:}  \emph{We always have $\mathrm{Pic}(X)_\Q = N^1(X)_\Q$ and $\rho(X) = 1$.}

\noindent\emph{Proof.} 
For the first part, it suffices to show that any numerically trivial Cartier divisor $D$ on $X$ is torsion. By Claims 4 and 2, $D = p^* D'$ and $D' \sim_\Q \lambda L$ for some $\lambda \in \Q$. Restricting $D$ to $S_\infty$, where it coincides with $D'$, we see that $\lambda L$ is numerically trivial. Consequently, $\lambda = 0$, hence $D'$ is torsion and so is $D$.

For the second part, it suffices to show that any two Cartier divisors $D, E$ on $X$ are numerically proportional. Write $D = p^* D'$ and $E = p^* E'$. As $D' \sim_\Q \lambda L$ and $E' \sim_\Q \mu L$, the divisors $D'$ and $E'$ are $\Q$-linearly proportional, and so are $D$ and $E$. In particular, $D$ and $E$ are numerically proportional. 
\hfill \qed

Using these preparations, we can now construct the desired example.
\begin{prop}
If $Y \subset \P^3$ is a general smooth quintic, then the associated projective cone $X \subset \mathbb{P}^4$ is a Mori Dream Space, but $X$ is not log canonical, nor does it have rational singularities.
\end{prop}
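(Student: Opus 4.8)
The plan is to instantiate the cone construction with a concrete $Y$ and verify the three assertions---Mori Dream Space, failure of log canonicity, and failure of rational singularities---using Claims~1 through~6 together with a local computation at the vertex $P$. I would take $Y \subset \mathbb{P}^3$ to be a general smooth quintic surface, so that $L = \mathscr{O}_Y(1)$ and the adjunction formula on $\mathbb{P}^3$ gives $K_Y = \mathscr{O}_Y(5-4) = \mathscr{O}_Y(1) = L$, i.e.~$K_Y \sim L$, which in the notation of the claims means $\lambda = 1 > 0$.

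First I would establish the Mori Dream Space property. By the Noether--Lefschetz theorem, a general smooth surface of degree at least $4$ in $\mathbb{P}^3$ has $\mathrm{Pic}(Y) = \mathbb{Z}\cdot L$, whence $\rho(Y) = 1$ and $\mathrm{Pic}(Y)_\mathbb{Q} = N^1(Y)_\mathbb{Q}$. Claim~5 then yields that $X$ is $\mathbb{Q}$-factorial, and Claim~6 gives $\mathrm{Pic}(X)_\mathbb{Q} = N^1(X)_\mathbb{Q}$ together with $\rho(X) = 1$; projectivity and normality of $X$ are built into the cone construction. Since $\rho(X) = 1$, the pseudoeffective cone and the nef cone of $X$ are both a single ray, so the finite-generation condition of Remark~\ref{rem:MDSalaHuKeel} is immediate (it reduces to finite generation of a section ring of a single ample class on a projective variety, which holds). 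Invoking Remark~\ref{rem:MDSalaHuKeel}, $X$ is a Mori Dream Space in the sense of Definition~\ref{defi:MDS}.

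Next I would rule out log canonicity and rational singularities. Because $K_Y \sim L$, we are in the case $\lambda = 1$, which is strictly positive; Claim~3 states that $(X,\emptyset)$ is log canonical if and only if $\lambda \le 0$, so $X$ is \emph{not} log canonical. For rational singularities I would argue locally at the vertex. The resolution $f\colon \widetilde X \to X$ with exceptional divisor $E \cong Y$ has discrepancy $a$ determined in Claim~3 by $K_Y = -(a+1)L$; with $\lambda = 1$ this forces $a = -2 < -1$. Thus the discrepancy of the unique exceptional divisor over the vertex is less than $-1$, so $X$ is not even log canonical, and in particular the singularity at $P$ is not rational: a rational (indeed even canonical or klt) singularity requires $a > -1$. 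To make the failure of rationality fully self-contained I would compute $R^1 f_* \mathscr{O}_{\widetilde X}$ near $P$; since $\widetilde X = \mathbb{P}_{\mathrm{sub}}(L \oplus \mathscr{O}_Y)$ is the projectivised bundle contracting $E\cong Y$ along the negative section, the higher direct image is governed by $H^1(Y, \mathscr{O}_Y(mL))$ for $m \ge 0$, and for the quintic surface $Y$ one has $H^1(Y,\mathscr{O}_Y) = 0$ but $h^2(Y,\mathscr{O}_Y) = p_g(Y) > 0$, and the relevant cohomology feeding into non-rationality is $H^2(Y,\mathscr{O}_Y) \ne 0$, obstructing $R^{\sbullet} f_* \mathscr{O}_{\widetilde X} = \mathscr{O}_X$.

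The main obstacle, and the step warranting the most care, is the last one: correctly identifying \emph{which} cohomology of $Y$ obstructs rationality of the cone singularity and matching it against the structure of the $\mathbb{P}^1$-bundle $\widetilde X$. The clean statement is that the projective cone over $Y$ has rational singularities at the vertex if and only if $H^i(Y, \mathscr{O}_Y(mL)) = 0$ for all $i > 0$ and all $m \ge 0$; the quintic surface has $p_g = h^2(Y,\mathscr{O}_Y) = \binom{5-1}{3} = 4 \ne 0$ (the $m=0$ term), so rationality fails. I would therefore present the discrepancy computation via Claim~3 as the primary, economical route (it simultaneously gives failure of log canonicity and, a fortiori, of rational singularities, since rational singularities are klt and hence log canonical for $\mathbb{Q}$-Gorenstein $X$), and relegate the direct cohomological verification to a confirming remark.
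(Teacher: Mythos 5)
Your treatment of the Mori Dream Space property (Noether--Lefschetz, Claims 5 and 6, finite generation of the section ring of an ample divisor) and of the failure of log canonicity (adjunction $K_Y = \O_Y(1) = L$, Claim 3 with $\lambda = 1$, discrepancy $a = -2$) is correct and coincides with the paper's argument. The genuine gap is in what you designate as the ``primary, economical route'' to non-rationality: you assert that a rational singularity ``requires $a > -1$'', i.e.\ that rational implies klt (or at least log canonical). This implication is false; the true implication is the reverse one (Elkik: klt $\Rightarrow$ rational). There exist rational singularities that are not log canonical. For instance, the normal surface singularity whose minimal resolution graph is a central smooth rational $(-4)$-curve meeting five disjoint $(-2)$-curves transversally is rational by Artin's criterion (its fundamental cycle $Z = 2E_0 + E_1 + \cdots + E_5$ satisfies $Z^2 = -6$, $K \cdot Z = 4$, hence $p_a(Z) = 0$), it is $\Q$-Gorenstein because rational surface singularities have finite local class group, yet the discrepancy along the central curve is $-4/3 < -1$. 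So ``not log canonical, a fortiori not rational'' is not a valid deduction. In the case at hand the shortcut can be repaired, but only with an ingredient you never invoke: $X$ is a quintic hypersurface in $\P^4$, hence Gorenstein (equivalently, $K_X \sim 0$ by Claim 1 and $X$ is Cohen--Macaulay), and for Gorenstein varieties rational is equivalent to canonical; since $a = -2 < 0$, non-rationality then follows.

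The good news is that the computation you relegate to a ``confirming remark'' is essentially the paper's actual proof and is complete on its own: rationality fails because $(R^2 f_* \O_{\widetilde X})_P \cong \bigoplus_{m \ge 0} H^2\bigl(Y, \O_Y(m)\bigr) \supset H^2(Y, \O_Y) \cong H^0(Y, K_Y)^* = H^0\bigl(Y, \O_Y(1)\bigr)^* \neq 0$ by Serre duality --- this is the $i=2$, $m=0$ term of the criterion you state, with $p_g(Y) = 4$. (Note also the small slip there: you first say the higher direct images are ``governed by $H^1(Y, \O_Y(mL))$''; since the divisor contracted by $f$ is the surface $E \cong Y$, the obstruction lives in $R^2 f_* \O_{\widetilde X}$, i.e.\ in $H^2$, as you in fact use afterwards.) So either promote the direct cohomological computation to the main argument, as the paper does, or insert the Gorenstein observation to license the discrepancy argument; as written, the route you emphasise rests on a false implication.
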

\noindent\emph{Proof.} 
 By \cite[Ch.~II, Prop.~8.23]{Hartshorne}, $X$ is normal. By the Noether-Lefschetz theorem, see for example \cite{LopezNoetherLefschetz}, we have $\mathrm{Pic}(Y) \cong \Z$, so $X$ is $\Q$-factorial by Claim 5. Then by Claim 6 and Remark~\ref{rem:MDSalaHuKeel} the variety $X$ is a Mori Dream Space, since the section ring of an ample divisor is finitely generated. However, by adjunction we obtain 
\begin{equation}\label{eq:canonical}
 K_Y = \O_Y(1).
\end{equation}
Therefore, the pair $(X, \emptyset)$ is not log canonical by Claim 3. The variety $X$ doesn't have rational singularities because \[(R^2 \! f_* \O_{\tilde X})_P \cong \bigoplus_{m \geq 0} H^2(Y, \mathscr{O}_Y(m))  \supset H^2(Y, \O_Y) \cong H^0(Y, \O_Y(K_Y)) \ne 0\] by Serre duality and \eqref{eq:canonical}.  \hfill \qed

This concludes the construction of Example~\ref{ex:MDS}.
\end{ex}

\subsection*{Concluding remark}
It remains an interesting open question to characterise spaces in class $\mathscr{Q}_G$ by the existence of special tensors, some finite generation property, or similar intrinsic properties.\\
\vspace{0.4cm}

\def\cprime{$'$} \def\polhk#1{\setbox0=\hbox{#1}{\ooalign{\hidewidth
  \lower1.5ex\hbox{`}\hidewidth\crcr\unhbox0}}}
  \def\polhk#1{\setbox0=\hbox{#1}{\ooalign{\hidewidth
  \lower1.5ex\hbox{`}\hidewidth\crcr\unhbox0}}}
\providecommand{\bysame}{\leavevmode\hbox to3em{\hrulefill}\thinspace}
\providecommand{\MR}{\relax\ifhmode\unskip\space\fi MR }
\providecommand{\MRhref}[2]{%
  \href{http://www.ams.org/mathscinet-getitem?mr=#1}{#2}
}
\providecommand{\href}[2]{#2}

\vspace{0.5cm}
\end{document}